\newtheorem{thm}{Theorem}[section]
\newtheorem{cor}[thm]{Corollary}
\newtheorem{lem}[thm]{Lemma}
\newtheorem{prop}[thm]{Proposition}
\theoremstyle{definition}
\newtheorem{defin}[thm]{Definition}
\numberwithin{equation}{section}
\newcommand{\id}{{\rm{id}}}
\newcommand{\Ad}{{\rm{Ad}}}
\newcommand{\Hom}{{\rm{Hom}}}
\newcommand{\BN}{\mathbf N}
\newcommand{\BC}{\mathbf C}
\newcommand{\la}{\langle}
\newcommand{\ra}{\rangle}
\newcommand{\lInd}{{\rm{l-Ind}}}
\newcommand{\rInd}{{\rm{r-Ind}}}
\newcommand{\End}{{\rm{End}}}
\begin{document}


\baselineskip=17pt



\title[The strong Morita equivalence for coactions]
{The strong Morita equivalence for coactions of a finite dimensional $C^*$-Hopf algebra on unital $C^*$-algebras}
\author{Kazunori Kodaka}
\address{Department of Mathematical Sciences, Faculty of Science, Ryukyu
University, Nishihara-cho, Okinawa, 903-0213, Japan}
\email{kodaka@math.u-ryukyu.ac.jp}

\author{Tamotsu Teruya}
\address{Faculty of Education, Gunma University, 4-2 Aramaki-machi,
Maebashi City, Gunma, 371-8510, Japan}
\email{teruya@gunma-u.ac.jp}

\date{}

\begin{abstract}
Following Jansen and Waldmann, and Kajiwara and Wata-tani,
we shall introduce notions of coactions of a finite dimensional $C^*$-Hopf algebra
on a Hilbert $C^*$-bimodule of finite type in the sense of Kajiwara and Watatani
and define their crossed product. We shall investigate their basic properties and
show that the strong Morita equivalence for coactions preserves the Rohlin property
for coactions of a finite dimensional $C^*$-Hopf algebra on unital $C^*$-algebras.
\end{abstract}

\subjclass[2010]{Primary 46L05; Secondary 46L08}

\keywords{$C^*$-algebras, $C^*$-Hopf algebras, Hilbert $C^*$-bimodules,
the Rohlin property, strong Morita equivalence}

\maketitle

\section{Introduction}\label{sec:intro}
Let $A$ and $B$ be unital $C^*$-algebras and $X$ a Hilbert $A-B$-bimodule of
finite type in the sense of Kajiwara and Watatani \cite {KW1:bimodule}.
Let $H$ be a finite dimensional $C^*$-Hopf algebra with its dual $C^*$-Hopf
algebra $H^0$. In this paper, following Jansen and Waldmann \cite {JW:covariant},
we shall introduce the notion of coactions of $H^0$ on $X$
and define their crossed product. That is, for coactions $\rho$ and $\sigma$ of $H^0$
on $A$ and $B$, respectively, we introduce a linear map $\lambda$ from $X$ to $X\otimes H^0$, which
is compatible with the coactions $\rho$, $\sigma$ and the left $A$-module action,
the right $B$-module action and the left $A$-valued and right $B$-valued inner products.
Then we can define the crossed product $X\rtimes_{\lambda}H$, which is a Hilbert $A\rtimes_{\rho}H-
B\rtimes_{\sigma}H$-bimodule of finite type. Furthermore, we shall give a duality theorem similar to
the ordinary one. This theorem in the case of countably discrete group actions and Kac systems
are found in Kajiwara and Watatani \cite {KW2:discrete} and Guo and Zhang \cite {GZ:Kac}, respectively.
The latter result is almost a generalization of our duality theorem. But our approach to coactions of a finite
dimensional $C^*$-Hopf algebra on a unital $C^*$-algebra is a useful addition to our paper,
especially the main result on preservation of the Rohlin property under the strongly Morita equivalence.
So, in Section \ref{sec:duality}, we give a duality theorem, a version of crossed product duality for coactions
of finite dimensional $C^*$-Hopf algebras on Hilbert $C^*$-bimodules of finite type.
Also, we see that if $X$ is an $A-B$-equivalence bimodule, we can show that $X\rtimes_{\lambda}H$
is an $A\rtimes_{\rho}H-B\rtimes_{\sigma}H$-equivalence bimodule. Hence $A\rtimes_{\rho}H$ is
strongly Morita equivalent to $B\rtimes_{\sigma}H$. Finally, if $X$ is an $A-B$-equivalence bimodule and
$\rho$ has the Rohlin property, then $\sigma$ has also the Rohlin property.
As an application of the result, we can obtain the following: Under a certain condition, if a unital $C^*$-algebra $A$ has a finite
dimensional $C^*$-Hopf algebra coaction of $H^0$ with the Rohlin property, then any unital $C^*$-algebra
that is strongly Morita equivalent to $A$ has also a finite dimensional $C^*$-Hopf algebra
coaction of $H^0$ with the Rohlin property. In \cite [Section 4]{OKT:finite}, we gave an incorrect example
of an action of a finite dimensional $C^*$-Hopf algebra on a unital $C^*$-algebra with the Rohlin property.
But applying the above result to  \cite [Section 7]{KT2:coaction}, we can give several examples of them.
\par
For an algebra $A$, we denote by $1_A$ and $\id_A$ the unit element in $A$ and the identity map on
$A$, respectively. If no confusion arises, we denote them by $1$ and $\id$, respectively. For each
$n\in \BN$, we denote by $M_n (\BC)$ the $n\times n$-matrix algebra over $\BC$ and $I_n$ denotes the
unit element in $M_n (\BC)$.
\par
For projections $p, q$ in a $C^*$-algebra $A$, we write $p\sim q$ in $A$ if $p$ and $q$ are Murray-von
Neumann equivalent in $A$.

\section{Preliminaries}\label{sec:pre}Let $H$ be a finite dimensional $C^*$-Hopf algebra.
We denote its comultiplication, counit and antipode by $\Delta$, $\epsilon$ and $S$, respectively. We shall
use Sweedler's notation $\Delta(h)=h_{(1)}\otimes h_{(2)}$ for any $h\in H$
which suppresses a possible summation when we write comultiplications. We denote by $N$
the dimension of $H$. Let $H^0$ be the dual $C^*$-Hopf algebra of $H$. We denote its
comultiplication, counit and antipode by $\Delta^0$, $\epsilon^0$ and $S^0$,
respectively. There is the distinguished
projection $e$ in $H$. We note that $e$ is the Haar trace on $H^0$. Also, there is the distinguished projection
$\tau$ in $H^0$ which is the Haar trace on $H$. Since $H$ is finite dimensional,
$H\cong\oplus_{k=1}^L M_{f_k }(\BC)$ and $H^0 \cong\oplus_{k=1}^K M_{d_k }(\BC)$
as $C^*$-algebras. Let $\{\, v_{ij}^k \, | \, k=1,2,\dots,L, \, i,j=1,2,\dots,f_k \, \}$
be a system of matrix units of $H$. Let $\{\, w_{ij}^k \, |\, k=1,2,\dots,K, \, i,j=1,2,\dots,d_k \}$
be a basis of $H$ satisfying Szyma\'nski and Peligrad's \cite [Theorem 2.2,2]{SP:saturated},
which is called  a system of {\it comatrix} units of $H$, that is, the dual basis of a system
of matrix units of $H^0$. Also let $\{\, \phi_{ij}^k \, |\, k=1,2,\dots,K, \, i,j=1,2,\dots,d_k \}$
and $\{\, \omega_{ij}^k \, |\, k=1,2,\dots,L, \, i,j=1,2,\dots,f_k \}$ be systems of matrix units and comatrix units of $H^0$,
respectively.
\par
Let $A$ and $B$ be unital $C^*$-algebras and $X$ a Hilbert $A-B$-bimodule of finite type in the sense of
\cite {KW1:bimodule}. We regard a $C^*$-Hopf algebra $H^0$ as an $H^0 -H^0$-equivalence bimodule in the usual way.
\par
Let $X\otimes H^0$ be an exterior tensor product of Hilbert $C^*$-bimodules $X$ and $H^0$, which is a
Hilbert $A\otimes H^0 -B\otimes H^0$-bimodule.

\begin{lem}\label{lem:tensor}With the above notations, $X\otimes H^0$
is a Hilbert $A\otimes H^0 -B\otimes H^0$-bimodule of finite type. In particular,
if $X$ is an $A-B$-equivalence bimodule, $X\otimes H^0$ is an $A\otimes H^0 -B\otimes H^0$-
equivalence bimodule.
\end{lem}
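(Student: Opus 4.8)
The plan is to reduce everything to the factorwise structure of the exterior tensor product together with the fact that $H^0$ is itself of finite type. Recall that, in the sense of Kajiwara and Watatani, a Hilbert $A-B$-bimodule $X$ is of finite type precisely when it admits a finite right basis $\{u_1,\dots,u_n\}\subset X$ and a finite left basis $\{v_1,\dots,v_m\}\subset X$, in the sense that
\begin{equation*}
x=\sum_{i=1}^{n}u_i\la u_i,x\ra_B=\sum_{j=1}^{m}{}_A\la x,v_j\ra v_j\qquad(x\in X).
\end{equation*}
Since $H^0$ is a unital finite dimensional $C^*$-algebra, regarded as an $H^0-H^0$-equivalence bimodule in the usual way it is of finite type; for instance $\{1_{H^0}\}$ serves simultaneously as a right and a left basis, because $1_{H^0}\la 1_{H^0},a\ra_{H^0}=a$ and ${}_{H^0}\la a,1_{H^0}\ra 1_{H^0}=a$ for every $a\in H^0$.

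First I would produce explicit bases for $X\otimes H^0$ by tensoring. Let $\{u_i\}$, $\{v_j\}$ be right and left bases of $X$ and let $\{p_s\}$, $\{q_t\}$ be right and left bases of $H^0$; I claim $\{u_i\otimes p_s\}$ is a right basis and $\{v_j\otimes q_t\}$ a left basis of $X\otimes H^0$. Indeed, by the definition of the exterior tensor product one has $\la u_i\otimes p_s,x\otimes a\ra_{B\otimes H^0}=\la u_i,x\ra_B\otimes\la p_s,a\ra_{H^0}$, so that
\begin{equation*}
\sum_{i,s}(u_i\otimes p_s)\la u_i\otimes p_s,x\otimes a\ra_{B\otimes H^0}
=\Bigl(\sum_i u_i\la u_i,x\ra_B\Bigr)\otimes\Bigl(\sum_s p_s\la p_s,a\ra_{H^0}\Bigr)
=x\otimes a
\end{equation*}
for every elementary tensor $x\otimes a$. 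As a finite sum of bounded operators the reconstruction map is bounded, and the elementary tensors span a dense subspace of $X\otimes H^0$, so the identity extends to all of $X\otimes H^0$. The left basis is treated identically using ${}_{A\otimes H^0}\la x\otimes a,y\otimes b\ra={}_A\la x,y\ra\otimes{}_{H^0}\la a,b\ra$. This shows that $X\otimes H^0$ is of finite type.

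For the second assertion I would invoke the standard behaviour of imprimitivity bimodules under exterior tensor products. Recall that an $A-B$-equivalence bimodule is a Hilbert $A-B$-bimodule whose two inner products are full and satisfy the imprimitivity relation ${}_A\la x,y\ra z=x\la y,z\ra_B$. Assuming $X$ is an $A-B$-equivalence bimodule, while $H^0$ is an $H^0-H^0$-equivalence bimodule by hypothesis, the imprimitivity relation for $X\otimes H^0$ follows factorwise from those of $X$ and of $H^0$, verified on elementary tensors and extended by linearity and continuity. Fullness is likewise inherited: the linear span of $\{\,{}_A\la x,y\ra\otimes{}_{H^0}\la a,b\ra\,\}$ is dense in $A\otimes H^0$ because each factor has dense range and the algebraic tensor product $A\odot H^0$ is dense in $A\otimes H^0$, and similarly on the right. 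Hence $X\otimes H^0$ is an $A\otimes H^0-B\otimes H^0$-equivalence bimodule.

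The bulk of the argument is routine density and continuity bookkeeping; the one point requiring genuine care, which I regard as the main obstacle, is to confirm that the finite bases of the two factors combine to finite bases of the completed exterior tensor product in a way consistent with the Kajiwara--Watatani conventions, and that the reconstruction and imprimitivity identities, checked only on elementary tensors, really do extend to all of $X\otimes H^0$. Both points rest on the finiteness of the bases, which makes the reconstruction maps bounded, and on the density of $A\odot H^0$ and of the elementary tensors in the respective completed tensor products.
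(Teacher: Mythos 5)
Your proof is correct and follows essentially the same route as the paper: exhibit finite right and left bases for $X\otimes H^0$ by tensoring bases of $X$ with (a basis of) $H^0$, invoke the Kajiwara--Watatani basis characterization of finite type, and then check fullness and the associativity (imprimitivity) relation factorwise on elementary tensors. Since $\{1_{H^0}\}$ is itself a one-element basis of $H^0$, your family $\{u_i\otimes p_s\}$ specializes exactly to the paper's choice $\{u_i\otimes 1^0\}$, so the only difference is cosmetic.
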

\begin{proof}Since $X$ is of finite type, there is a right $B$-basis $\{u_i \}_{i=1}^n $
of $X$. Then for any $x\in X$, $\phi\in H^0$,
$$
\sum_{i=1}^n (u_i \otimes 1^0 )\la u_i \otimes 1^0 \, , \, x\otimes\phi \ra_{B\otimes H^0}
=\sum_{i=1}^n (u_i \otimes 1^0 )(\la u_i \, , \, x \ra_B \otimes\phi ) \\
=x\otimes\phi .
$$
Thus a family $\{ u_i \otimes 1^0 \}_{i=1}^n $ is a right $B\otimes H^0$-basis of $X\otimes H^0$.
In the same way as above, we can see that there is a left $A\otimes H^0$-basis of $X\otimes H^0$. Hence by
\cite [Proposition 1.12]{KW1:bimodule} or \cite [Lemma 1.3]{KW2:discrete}, $X\otimes H^0$ is
a Hilbert $A\otimes H^0 -B\otimes H^0$-bimodule of finite type. Furthermore,
we suppose that $X$ is an $A-B$-equivalence bimodule. Since $X$ is full with the both-sided inner
products, by the definitions of the
left and right inner products of $X\otimes H^0$, so is $X\otimes H^0$. Moreover, the associativity
condition of the left and right inner products of $X\otimes H^0$ holds since the associativity condition
of the left and right inner products of $X$ holds. Hence $X\otimes H^0$ is an $A\otimes H^0 -B\otimes H^0$-
equivalence bimodule.
\end{proof}

Let $\Hom (H, X)$ be the vector space of all linear maps from $H$ to $X$. Then $X\otimes H^0$ is
isomorphic to $\Hom (H, X)$ as vector spaces. Sometimes, we identify $X\otimes H^0$ with $\Hom (H, X)$.

\section{Coactions of a finite dimensional $C^*$-Hopf algebra on a Hilbert $C^*$-bimodule of finite type and
strong Morita equivalence}
\label{sec:coaction}
Let $A$ and $B$ be unital $C^*$ algebras and $X$ a Hilbert $A-B$-bimodule of finite type. Let
$H$ be a finite dimensional $C^*$-Hopf algebra with its dual $C^*$-Hopf algebra $H^0$.
Let $\rho$ be a weak coaction of $H^0$ on $A$ and $\lambda$ a linear map
from $X$ to $X\otimes H^0$. Following \cite{JW:covariant}, \cite{KW2:discrete},
we shall introduce the several definitions.

\begin{defin}\label{Def:wcoactionleft}
With the above notations, we say that $(A, X, \rho, \lambda, H^0 )$ is a {\it weak left covariant system}
if the following conditions hold:
\newline
(1) $\lambda(ax)=\rho(a)\lambda(x)$ for any $a\in A$, $x\in X$,
\newline
(2) $\rho({}_A \la x \, , \,y \ra)={}_{A\otimes H^0}\la \lambda(x) \, , \, \lambda(y) \ra$ for any
$x, y\in X$,
\newline
(3) $(\id_X \otimes\epsilon^0 )\circ\lambda =\id_X$.

We call $\lambda$ a {\it weak left coaction} of $H^0$ on $X$ with respect to $(A, \rho)$.
\end{defin}

We define the weak action of $H$ on $A$ induced by $\rho$ as follows: For any $a\in A$, $h\in H$,
$$
h\cdot_{\rho}a=(\id\otimes h)(\rho(a)),
$$
where we regard $H$ as the dual space of $H^0$. In the same way as above, we can define the action
of $H$ on $X$ induced by $\lambda$ as follows: For any $x\in X$, $h\in H$,
$$
h\cdot_{\lambda}x=(\id\otimes h)(\lambda(x))=\lambda(x)^{\widehat{}}(h),
$$
where $\lambda(x)^{\widehat{}}$ is the element in $\Hom(H, X)$ induced by $\lambda(x)$ in
$X\otimes H^0$. Then we obtain the following conditions which are equivalent to Conditions
(1)-(3) in Definition \ref{Def:wcoactionleft}, respectively:
\newline
(1)' $h\cdot_{\lambda}ax=[h_{(1)}\cdot_{\rho}a][h_{(2)}\cdot_{\lambda}x]$ for any $a\in A$, $x\in X$, $h\in H$,
\newline
(2)' $h\cdot_{\rho}{}_A \la x ,y \ra ={}_A \la [h_{(1)}\cdot_{\lambda}x] , [S(h_{(2)}^* )\cdot_{\lambda}y] \ra$
for any $x,y\in X$, $h\in H$,
\newline
(3)' $1_H \cdot_{\lambda}x=x$ for any $x\in X$.
\par
Let $\sigma$ be a weak coaction of $H^0$ on $B$.

\begin{defin}\label{Def:wcoactionright}With the above notations, we say that $(B, X, \sigma, \lambda, H^0 )$ is a
{\it weak right covariant system} if the following conditions hold:
\newline
(4) $\lambda(xb)=\lambda(x)\sigma(b)$ for any $b\in B$, $x\in X$,
\newline
(5) $\sigma(\la x , y \ra_B )=\la\lambda(x) , \lambda(y) \ra_{B\otimes H^0}$ for any $x, y\in X$,
\newline
(6) $(\id_X \otimes\epsilon^0 )\circ\lambda=\id_X$.
\end{defin}
We call $\lambda$ a
{\it weak right coaction}
of $H^0$ on $X$ with respect to $(B, \sigma )$.
We can also define the weak action of $H$ on $X$ induced by $\lambda$ satisfying the similar
conditions to (1)'-(3)'. That is, we have the following conditions which are equivalent to Conditions (4)-(6), respectively:
\newline
(4)' $h\cdot_{\lambda}xb=[h_{(1)}\cdot_{\lambda}x][h_{(2)}\cdot_{\sigma}b]$ for any $b\in B$, $x\in X$, $h\in H$,
\newline
(5)' $h\cdot_{\sigma}\la x , y \ra_B =\la [S(h_{(1)}^* )\cdot_{\lambda}x] \, , \, [h_{(2)}\cdot_{\lambda}y ] \ra_B$ 
for any $x,y\in X$, $h\in H$,
\newline
(6)' $1_H \cdot_{\lambda}x=x$ for any $x\in X$.
\par
Let $\rho$ and $\sigma$ be weak coactions of $H^0$ on $A$ and $B$, respectively. Let
$X$ be a Hilbert $A-B$-bimodule of finite type.

\begin{defin}\label{Def:wcovariant}We say that $(A, B, X,  \rho, \sigma, \lambda, H^0 )$ is a
{\it weak covariant system} if the following conditions hold:
\newline
(1) $\lambda(ax)=\rho(a)\lambda(x)$ for any $a\in A$, $x\in X$,
\newline
(2) $\lambda(xb)=\lambda(x)\sigma(b)$ for any $b\in B$, $x\in X$,
\newline
(3) $\rho({}_A \la x \, , \,y \ra)={}_{A\otimes H^0}\la \lambda(x) \, , \, \lambda(y) \ra$ for any
$x, y\in X$,
\newline
(4) $\sigma(\la x , y \ra_B )=\la\lambda(x) , \lambda(y) \ra_{B\otimes H^0}$ for any $x, y\in X$,
\newline
(5) $(\id_X \otimes\epsilon^0 )\circ\lambda =\id_X$.
\end{defin}
We call $\lambda$ a {\it weak coaction} of $H^0$ on $X$ with respect to $(A, B, \rho, \sigma )$.
We note that the above conditions are equivalent to the following conditions, respectively:
\newline
(1)' $h\cdot_{\lambda}ax=[h_{(1)}\cdot_{\rho}a][h_{(2)}\cdot_{\lambda}x]$ for any $a\in A$, $x\in X$, $h\in H$,
\newline
(2)' $h\cdot_{\lambda}xb=[h_{(1)}\cdot_{\lambda}x][h_{(2)}\cdot_{\sigma}b]$ for any $b\in B$, $x\in X$, $h\in H$,
\newline
(3)' $h\cdot_{\rho}{}_A \la x ,y \ra ={}_A \la [h_{(1)}\cdot_{\lambda}x] , [S(h_{(2)}^* )\cdot_{\lambda}y] \ra$
for any $x,y\in X$, $h\in H$,
\newline
(4)' $h\cdot_{\sigma}\la x , y \ra_B =\la [S(h_{(1)}^* )\cdot_{\lambda}x] \, , \, [h_{(2)}\cdot_{\lambda}y ] \ra_B$ 
for any $x,y\in X$, $h\in H$,
\newline
(5)' $1_H \cdot_{\lambda}x=x$ for any $x\in X$.
\vskip 0.2cm
We extend the above notions
to coactions of a finite dimensional $C^*$-Hopf algebra on unital $C^*$-algebras.

\begin{defin}\label{Def:coaction}Let $A, B$ and $H, H^0$ be as above. Let $\rho$
and $\sigma$ be coactions of $H^0$ on $A$ and $B$, respectively and
let $X$ be a Hilbert $A-B$-bimodule of finite type.
\newline
(i) We say that $(A, X, \rho, \lambda, H^0 )$ is a
\it
left covariant system
\rm
if it is a weak left covariant system and a weak left coaction $\lambda$ of $H^0$ on $X$ with
respect to $(A, \rho)$ satisfies that
\newline
($*$) $(\lambda\otimes\id)\circ\lambda=(\id\otimes\Delta^0 )\circ\lambda$,
\newline
which is equivalent to the condition that
\newline
($*$)' $h\cdot_{\lambda}[l\cdot_{\lambda}x]=hl\cdot_{\lambda}x$
for any $x\in X$ , $h, l\in H$.
\par
We call $\lambda$ a
\sl
left coaction
\rm
of $H^0$ on $X$ with respect to $(A, \rho)$.
\newline
(ii) We say that $(B, X, \sigma, \lambda, H^0 )$ is a
\it
right covariant system
\rm
if it is a weak right covariant system and a weak right coaction $\lambda$ of $H^0$ on $X$ with
respect to $(B, \sigma)$ satisfies ($*$) or ($*$)'.
We call $\lambda$ a
\sl
right coaction
\rm
of $H^0$ on $X$
with respect to $(B, \sigma )$.
\newline
(iii) We say that $(A, B, X, \rho, \sigma, \lambda, H^0 )$ is a
{\it covariant system} if it is a weak covariant system and a weak
coaction $\lambda$ with respect to $(A, B, \rho, \sigma )$ satisfies ($*$) or ($*$)'.
We call $\lambda$ a {\it coaction} of $H^0$ on $X$ with respect to $(A, B, \rho, \sigma)$.
\end{defin}

Furthermore, we extend the notion of the covariant system to twisted coactions
of a finite dimensional $C^*$-Hopf algebra on unital $C^*$-algebras.
We recall the definition of a twisted coaction $(\rho, u)$ of a $C^*$-Hopf algebra
$H^0$ on a unital $C^*$-algebra $A$ (See [9], [10]). Let $\rho$ be a weak coaction of
$H^0$ on $A$ and $u$ a unitary element in $A\otimes H^0 \otimes H^0$.
Then we say that $(\rho, u)$ is a {\sl twisted coaction} of $H^0$ on $A$ if
the following conditions hold:
\newline
(1) $(\rho\otimes\id)\circ\rho =\Ad(u)\circ(\id\otimes\Delta^0 )\circ\rho$,
\newline
(2) $(u\otimes 1^0 )(\id\otimes\Delta^0 \otimes\id)(u)=(\rho\otimes\id\otimes\id)(u)(\id\otimes\id\otimes\Delta^0 )(u)$,
\newline
(3) $(\id\otimes h\otimes\epsilon^0 )(u)=(\id\otimes\epsilon^0 \otimes h)(u)=\epsilon^0 (h)1^0$ for any $h\in H$.
\newline
The above conditions are equivalent to the following conditions, respectively:
\newline
(1)' $h\cdot_{\rho}[l\cdot_{\rho}a]=\widehat{u}(h_{(1)}, l_{(1)})[h_{(2)}l_{(2)}\cdot_{\rho}a]\widehat{u^*}(h_{(3)}, l_{(3)})$, for
any $a\in A$, $h, l\in H$,
\newline
(2)' $\widehat{u}(h_{(1)}, l_{(1)})\widehat{u}(h_{(2)}l_{(2)}, m)
=[h_{(1)}\cdot_{\rho}\widehat{u}(l_{(1)}, m_{(1)})]\widehat{u}(h_{(2)}, l_{(2)}m_{(2)})$, for any $h, l, m\in H$,
\newline
(3)' $\widehat{u}(h, 1)=\widehat{u}(1, h)=\epsilon(h)1^0 $ for any $h\in H$.

\begin{defin}\label{Def:twisted}Let $A, B$ and $H, H^0$ be as above. Let $(\rho, u)$
and $(\sigma, v)$ be twisted coactions of $H^0$ on $A$ and $B$, respectively and
let $X$ be a Hilbert $A-B$-bimodule of finite type. We say that $(A, B, X, \rho, u, \sigma, v, \lambda, H^0 )$ is
a {\it twisted covariant system} if it is a weak covariant system and a weak coaction $\lambda$
of $H^0$ with respect to $(A, B, \rho, \sigma)$ satisfies that
\newline
($* *$) $(\lambda\otimes\id)(\lambda(x)) =u(\id\otimes\Delta^0 )(\lambda(x))v^* $ for any $x\in X$, 
\newline
which is equivalent to the condition that
\newline
($**$)' $h\cdot_{\lambda}[l\cdot_{\lambda}x]
=\widehat{u}(h_{(1)}, l_{(1)})[h_{(2)}l_{(2)}\cdot_{\lambda}x]\widehat{v}^* (h_{(3)}, l_{(3)})$
for any $x\in X$, $h, l\in H$,
\newline
where $\widehat{u}$ and $\widehat{v}$ are elements in $\Hom (H\times H, A)$ and $\Hom (H\times H, B)$
induced by $u\in A\otimes H^0 \otimes H^0$ and $v\in B\otimes H^0 \otimes H^0$, respectively.
We call $\lambda$ a {\it twisted coaction} of $H^0$ on $X$ with respect to $(A, B, \rho, u, \sigma, v )$.
\end{defin}
Next, we introduce the notion of strong Morita equivalence for coactions of a finite dimensional
$C^*$-Hopf algebra on unital $C^*$-algebras.

\begin{defin}\label{Def:morita}Let $A, B$ and $H, H^0$ be as above.
\newline
(i) Let $\rho$ and $\sigma$ be weak coactions of $H^0$ on $A$ and $B$, respectively.
We say that $\rho$ is {\it strongly Morita equivalent} to $\sigma$ if there are an $A-B$-equivalence
bimodule $X$ and a weak coaction $\lambda$ of $H^0$ on $X$ such that $(A, B, X, \rho, \sigma, \lambda, H^0 )$
is a weak covariant system.
\newline
(ii) Let $\rho$ and $\sigma$ be coactions of $H^0$ on $A$ and $B$, respectively.
We say that $\rho$ is {\it strongly Morita equivalent} to $\sigma$ if there are an $A-B$-equivalence
bimodule $X$ and a coaction $\lambda$ of $H^0$ on $X$ such that $(A, B, X, \rho, \sigma, \lambda, H^0 )$
is a covariant system.
\newline
(iii) Let $(\rho, u)$ and $(\sigma, v)$ be twisted coactions of $H^0$ on $A$ and $B$, respectively.
We say that $(\rho, u)$ is {\it strongly Morita equivalent} to $(\sigma, v)$ if there are an $A-B$-equivalence
bimodule $X$ and a twisted coaction $\lambda$ of $H^0$ on $X$ such that $(A, B, X, \rho, u, \sigma, v, \lambda, H^0 )$
is a twisted covariant system.
\end{defin}

We shall show that the above strong Morita equivalences are equivalence
relations.
\begin{prop}\label{prop:relation}The strong Morita equivalence of weak coactions of a finite
dimensional $C^*$-Hopf algebra on a unital $C^*$-algebra is an equivalence relation.
\end{prop}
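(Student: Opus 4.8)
The plan is to establish the three defining properties of an equivalence relation --- reflexivity, symmetry, and transitivity --- by exhibiting in each case a concrete equivalence bimodule carrying a weak coaction, and checking that Conditions (1)--(5) of Definition \ref{Def:wcovariant} hold (equivalently (1)'--(5)'). Reflexivity is the easiest: given a weak coaction $\rho$ of $H^0$ on $A$, I would take $X=A$ as the standard $A-A$-equivalence bimodule with ${}_A \la a, b\ra = ab^*$ and $\la a, b\ra_A = a^* b$, and set $\lambda=\rho$. Since $\rho$ is a unital $*$-homomorphism, Conditions (1),(2) become $\rho(ax)=\rho(a)\rho(x)$ and $\rho(xb)=\rho(x)\rho(b)$, Conditions (3),(4) become $\rho(xy^*)=\rho(x)\rho(y)^*$ and $\rho(x^*y)=\rho(x)^*\rho(y)$, and Condition (5) is exactly the defining identity $(\id\otimes\epsilon^0)\circ\rho=\id_A$. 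Thus $(A,A,A,\rho,\rho,\rho,H^0)$ is a weak covariant system and $\rho\sim\rho$.

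For symmetry, suppose $\rho\sim\sigma$ is implemented by an $A-B$-equivalence bimodule $X$ and a weak coaction $\lambda$. I would pass to the conjugate bimodule $\widetilde{X}$, a $B-A$-equivalence bimodule with $b\cdot\widetilde{x}=\widetilde{xb^*}$, $\widetilde{x}\cdot a=\widetilde{a^* x}$, ${}_B \la \widetilde{x},\widetilde{y}\ra=\la x,y\ra_B$ and $\la \widetilde{x},\widetilde{y}\ra_A={}_A \la x,y\ra$, and define a weak coaction by the antipode-twisted formula $h\cdot_{\widetilde{\lambda}}\widetilde{x}=\widetilde{\, S(h)^* \cdot_{\lambda} x\,}$. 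One then checks that $(B,A,\widetilde{X},\sigma,\rho,\widetilde{\lambda},H^0)$ is a weak covariant system: the module conditions (1),(2) follow from (2)',(1)' for $\lambda$ and the inner-product conditions (3),(4) from (4)',(3)' for $\lambda$, all after applying conjugate-linearity of the pairings. The reconciliation of the two occurrences of the antipode and the involution rests on the $C^*$-Hopf algebra identities $(h\cdot_{\rho}a)^*=S(h)^*\cdot_{\rho}a^*$ and $(*\circ S)^2=\id$, i.e. $S(S(h)^*)^*=h$, together with $\epsilon(S(h))=\epsilon(h)$ for Condition (5). This antipode-and-involution bookkeeping is the delicate point, and I expect it to be the main obstacle of the proof.

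For transitivity, suppose $\rho\sim\sigma$ via $(X,\lambda)$ over $A-B$ and $\sigma\sim\tau$ via $(Y,\mu)$ over $B-C$. I would take the interior tensor product $X\otimes_B Y$, an $A-C$-equivalence bimodule, and define the diagonal coaction through $h\cdot_{\lambda\otimes\mu}(x\otimes_B y)=[h_{(1)}\cdot_{\lambda}x]\otimes_B[h_{(2)}\cdot_{\mu}y]$. The key preliminary point is that this is well defined on the balanced tensor product: applying (4)' for $\lambda$ and (1)' for $\mu$, the $B$-element arising from $xb\otimes_B y$ versus $x\otimes_B by$ is carried across the balancing by $\sigma$, and coassociativity of $\Delta$ then yields equality. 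Granting this, Conditions (1)--(5) for $(A,C,X\otimes_B Y,\rho,\tau,\lambda\otimes\mu,H^0)$ follow from the corresponding conditions for $\lambda$ and $\mu$, using that $\epsilon^0$ is multiplicative for Condition (5) and that the $C$-valued inner product $\la x_1\otimes y_1,x_2\otimes y_2\ra_C=\la y_1,\la x_1,x_2\ra_B\, y_2\ra_C$ lets the $H^0$-components propagate multiplicatively for Conditions (3),(4). Combining reflexivity, symmetry, and transitivity gives the proposition.
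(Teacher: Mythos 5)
Your proposal is correct and follows essentially the same route as the paper: the only nontrivial point, transitivity, is handled there exactly as you do it, via the interior tensor product $X\otimes_B Y$ with the diagonal coaction $h\cdot_{\lambda\otimes\mu}(x\otimes y)=[h_{(1)}\cdot_{\lambda}x]\otimes[h_{(2)}\cdot_{\mu}y]$, with the verification of Conditions (1)'--(5)' left as routine. The paper dismisses reflexivity and symmetry as clearly holding, so your explicit constructions there --- $\lambda=\rho$ on ${}_A A_A$, and the conjugate bimodule with $h\cdot_{\widetilde{\lambda}}\widetilde{x}=\widetilde{S(h)^{*}\cdot_{\lambda}x}$, reconciled via $S(S(h)^*)^*=h$ --- are sound fillings-in of omitted detail rather than a different argument.
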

\begin{proof}It suffices to show the transitivity since the other conditions clearly hold.
Let $A, B, C$ be unital $C^*$-algebras and let $X$ and $Y$ be an $A-B$-equivalence bimodule
and a $B-C$-equivalence bimodule,
respectively. Let $\rho$, $\sigma$ and $\gamma$ be weak coactions of $H^0$ on $A, B$ and $C$,
respectively. We suppose that $\rho$ is strongly Morita equivalent to $\sigma$ and that
$\sigma$ is strongly Morita equivalent to $\gamma$. Let $\lambda$ and $\mu$ be weak coactions
of $H^0$ on $X$ and $Y$ satisfying Definition \ref{Def:morita}(i), respectively.
Then $X\otimes_B Y$ is an $A-C$-equivalence bimodule.
We define a bilinear map `` $\cdot_{\lambda\otimes\mu}$'' from $H\times (X\otimes_B Y)$ to
$X\otimes_B Y$ as follows: For any $x\in X$, $y\in Y$, $h\in H$,
$$
h\cdot_{\lambda\otimes\mu}(x\otimes y)=[h_{(1)}\cdot_{\lambda}x]\otimes [h_{(2)}\cdot_{\mu}y] .
$$
Then we can show that the above map `` $\cdot_{\lambda\otimes\mu}$" satisfies
Conditions (1)'-(5)' in Definition \ref{Def:wcovariant} by routine computations.
\end{proof}

\begin{cor}\label{cor:relation}The strong Morita equivalence of twisted coactions of a finite
dimensional $C^*$-Hopf algebra on a unital $C^*$-algebra is an equivalence relation.
\end{cor}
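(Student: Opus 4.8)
The plan is to follow the scheme of Proposition \ref{prop:relation}, so that only transitivity demands genuine work. For reflexivity I would take $X=A$, viewed as the trivial $A-A$-equivalence bimodule, together with $\lambda=\rho$; then condition ($**$) of Definition \ref{Def:twisted} (with $v$ replaced by $u$) is literally axiom (1) in the definition of the twisted coaction $(\rho,u)$, and the remaining weak covariant conditions reduce to the weak-coaction properties of $\rho$, so $(A,A,A,\rho,u,\rho,u,\rho,H^0)$ is a twisted covariant system. For symmetry, if $(\rho,u)$ is strongly Morita equivalent to $(\sigma,v)$ via an $A-B$-equivalence bimodule $X$ and a twisted coaction $\lambda$, I would transport $\lambda$ to the dual $B-A$-equivalence bimodule $\widetilde X$ and check that the induced map satisfies ($**$)' with the roles of $u$ and $v$ exchanged; this is routine once the induced map is written out.

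For transitivity, let $X$ be an $A-B$-equivalence bimodule carrying a twisted coaction $\lambda$ implementing $(\rho,u)\sim(\sigma,v)$, and let $Y$ be a $B-C$-equivalence bimodule carrying a twisted coaction $\mu$ implementing $(\sigma,v)\sim(\gamma,w)$. As in Proposition \ref{prop:relation} I would form the $A-C$-equivalence bimodule $X\otimes_B Y$ and set $h\cdot_{\lambda\otimes\mu}(x\otimes y)=[h_{(1)}\cdot_{\lambda}x]\otimes[h_{(2)}\cdot_{\mu}y]$. That this makes $(A,C,X\otimes_B Y,\rho,\gamma,\lambda\otimes\mu,H^0)$ a weak covariant system is exactly the content of Proposition \ref{prop:relation}, so it remains only to establish the twist identity ($**$)' for $\lambda\otimes\mu$ relative to the pair $(u,w)$.

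The heart of the matter is the cancellation of the middle twist $v$. Expanding $h\cdot_{\lambda\otimes\mu}[l\cdot_{\lambda\otimes\mu}(x\otimes y)]$ and applying ($**$)' for $\lambda$ on the first tensor factor and for $\mu$ on the second, with a sixfold comultiplication of $h$ and of $l$, produces
\begin{align*}
&\widehat{u}(h_{(1)},l_{(1)})[h_{(2)}l_{(2)}\cdot_{\lambda}x]\,\widehat{v}^*(h_{(3)},l_{(3)}) \\
&\qquad\otimes\,\widehat{v}(h_{(4)},l_{(4)})[h_{(5)}l_{(5)}\cdot_{\mu}y]\,\widehat{w}^*(h_{(6)},l_{(6)}).
\end{align*}
Since $\widehat{v}^*(h_{(3)},l_{(3)})$ acts on the right of $X$ and $\widehat{v}(h_{(4)},l_{(4)})$ on the left of $Y$, the balancing of the tensor product over $B$ brings them together into the single product $\widehat{v}^*(h_{(3)},l_{(3)})\widehat{v}(h_{(4)},l_{(4)})$. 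Under the identification of $\Hom(H\times H,B)$ with $B\otimes H^0\otimes H^0$ this product is the convolution of the induced maps of $v^*$ and $v$, hence the induced map of $v^*v$; unitarity of $v$ gives $v^*v=1_B\otimes 1^0\otimes 1^0$, so it equals $\epsilon(h_{(3)})\epsilon(l_{(3)})1_B$. The counit axiom then collapses the two surplus legs of the comultiplication, and after relabeling the remaining threefold comultiplication and recognizing $[h_{(2)}l_{(2)}\cdot_{\lambda}x]\otimes[h_{(3)}l_{(3)}\cdot_{\mu}y]=h_{(2)}l_{(2)}\cdot_{\lambda\otimes\mu}(x\otimes y)$, the expression becomes $\widehat{u}(h_{(1)},l_{(1)})[h_{(2)}l_{(2)}\cdot_{\lambda\otimes\mu}(x\otimes y)]\widehat{w}^*(h_{(3)},l_{(3)})$, which is precisely ($**$)' relative to $(u,w)$.

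The main obstacle I anticipate is bookkeeping rather than conceptual: tracking the sixfold Sweedler expansion and justifying the passage $\widehat{v}^*(h_{(3)},l_{(3)})\widehat{v}(h_{(4)},l_{(4)})=\widehat{v^*v}(\,\cdot\,,\,\cdot\,)$ as a genuine convolution in $B\otimes H^0\otimes H^0$, which requires pinning down the interpretation of the pointwise symbol $\widehat{v}^*$ against the adjoint $v^*$ (where the antipode of $H^0$ enters the pairing). Once this single cancellation is secured, the counit collapse and the identification of $\cdot_{\lambda\otimes\mu}$ in the middle are immediate, and transitivity—hence the corollary—follows.
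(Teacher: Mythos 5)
Your proposal is correct and follows essentially the same route as the paper: Corollary \ref{cor:relation} is reduced to transitivity via Proposition \ref{prop:relation}, and condition ($**$)$'$ for $\lambda\otimes\mu$ is verified by applying ($**$)$'$ on each tensor factor and cancelling the middle twist $\widehat{v}^*(h_{(3)},l_{(3)})\widehat{v}(h_{(4)},l_{(4)})$ across the $B$-balanced tensor product using unitarity of $v$ --- a step the paper performs silently in passing from its first to its second displayed line, and which you rightly identify (together with the reading of $\widehat{v}^*$ as the map induced by the adjoint $v^*$) as the only point needing justification. Your explicit treatment of reflexivity and symmetry fills in what the paper dismisses as ``the other conditions clearly hold.''
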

\begin{proof}By Proposition \ref{prop:relation}, we have only to show that Condition ($**$)' in
Definition \ref{Def:twisted}. Let $(\rho, u)$, $(\sigma, v)$ and $(\gamma, w)$ be twisted coactions
of $H^0$ on unital $C^*$-algebras $A$, $B$ and $C$, respectively. Let the other notations be as in the proof of
Proposition \ref{prop:relation}.
For any $x\in X$, $y\in Y$, $h, l\in H$,
\begin{align*}
& h\cdot_{\lambda\otimes\mu}[l\cdot_{\lambda\otimes\mu}x\otimes y]
=[h_{(1)}\cdot_{\lambda}
[l_{(1)}\cdot_{\lambda}x]]\otimes[h_{(2)}\cdot_{\mu}[l_{(2)}\cdot_{\mu}y]] \\
& =\widehat{u}(h_{(1)}, l_{(1)})[h_{(2)}l_{(2)}\cdot_{\lambda}x]\otimes [h_{(3)}l_{(3)}\cdot_{\mu}y]
\widehat{w}^* (h_{(4)}, l_{(4)}) \\
& =\widehat{u}(h_{(1)}, l_{(1)})[h_{(2)}l_{(2)}\cdot_{\lambda\otimes\mu}(x\otimes y)]\widehat{w}^* (h_{(3)}, l_{(3)}).
\end{align*}
Therefore, we obtain the conclusion.
\end{proof}

Of course, the notion of the strong Morita equivalence of coactions of a finite dimensional $C^*$-Hopf algebra on unital
$C^*$-algebras is an extension of that of actions of a finite group on unital $C^*$-algebras.
We shall show it.
Let $G$ be a finite group and $\alpha$ an action of $G$ on a unital $C^*$-algebra $A$.
We consider the coaction of $C(G)$ on $A$ induced by the action $\alpha$ of $G$ on $A$.
We denote it by the same symbol $\alpha$. That is,
$$
\alpha: A\longrightarrow A\otimes C(G), \quad a\longmapsto \sum_{t\in G}\alpha_t (a)\otimes\delta_t
$$
for any $a\in A$, where for any $t\in G$, $\delta_t $ is a projection in $C(G)$ defined by
$$
\delta_t (s)=\begin{cases} 0 & \text{if $s\ne t$} \\
1 & \text{if $s=t$} \end{cases}.
$$
Let $B$ be a unital $C^*$-algebra and $\beta$ an action of $G$
on $B$. We denote by the same symbol $\beta$ the coaction of $C(G)$ on $B$
induced by the action $\beta$.

\begin{prop}\label{prop:extension}With the above notations, the following conditions are equivalent:
\newline
$(1)$ The actions $\alpha$ and $\beta$ of $G$ on $A$ and $B$ are strongly Morita equivalent,
\newline
$(2)$ The coactions $\alpha$ and $\beta$ of $C(G)$ on $A$ and $B$ are strongly Morita equivalent.
\end{prop}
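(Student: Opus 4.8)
The plan is to translate the $C(G)$-coaction language of Section~\ref{sec:coaction} back into ordinary $G$-action language and to observe that the two covariance conditions coincide term by term. Here the relevant Hopf algebras are $H^0 = C(G)$ and $H = \BC[G]$, paired by $\la t, \delta_s\ra = \delta_{s,t}$ for $s,t\in G$. First I would record the Hopf-algebraic data on group-like elements: for $t\in G\subset H$ one has $\Delta(t)=t\otimes t$, $\epsilon(t)=1$, $S(t)=t^{-1}$ and $t^*=t^{-1}$, while dually $\epsilon^0 (\delta_t)=\delta_{t,e}$ (with $e$ the unit of $G$) and $\delta_t^* =\delta_t$. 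With the paper's convention $h\cdot_{\rho}a=(\id\otimes h)(\rho(a))$, a direct computation gives $t\cdot_{\alpha}a=(\id\otimes t)(\sum_{s\in G}\alpha_s (a)\otimes\delta_s )=\alpha_t (a)$, and likewise $t\cdot_{\beta}b=\beta_t (b)$; that is, the induced-action formalism returns exactly the original group actions.

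Next I would set up the dictionary between coactions and families of maps. Given a linear map $\lambda\colon X\to X\otimes C(G)$, since $\{\delta_t\}_{t\in G}$ is a basis of $C(G)$ one may write $\lambda(x)=\sum_{t\in G}\lambda_t (x)\otimes\delta_t$ uniquely, where $\lambda_t (x)=t\cdot_{\lambda}x=(\id\otimes t)(\lambda(x))$; this is a linear bijection between such $\lambda$ and families $\{\lambda_t \}_{t\in G}$ of linear maps on $X$. The heart of the argument is then to substitute group-like elements $h=t$, $l=s$ into the primed conditions (1)$'$--(5)$'$ of Definition~\ref{Def:wcovariant} and ($*$)$'$ of Definition~\ref{Def:coaction}(iii). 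Using $\Delta(t)=t\otimes t$ and, crucially, $S(t^* )=S(t^{-1})=t$, these reduce respectively to $\lambda_t (ax)=\alpha_t (a)\lambda_t (x)$, $\lambda_t (xb)=\lambda_t (x)\beta_t (b)$, $\alpha_t ({}_A\la x,y\ra)={}_A\la\lambda_t (x),\lambda_t (y)\ra$, $\beta_t (\la x,y\ra_B )=\la\lambda_t (x),\lambda_t (y)\ra_B$, together with $\lambda_e =\id_X$ from condition (5) and $\lambda_t \lambda_s =\lambda_{ts}$ from ($*$)$'$. These are precisely the conditions stating that $\{\lambda_t\}_{t\in G}$ implements a strong Morita equivalence of the group actions $\alpha$ and $\beta$ on the $A-B$-equivalence bimodule $X$.

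The two implications then follow at once from this correspondence, using the same equivalence bimodule $X$ throughout. For (2)$\Rightarrow$(1), a coaction $\lambda$ making $(A,B,X,\alpha,\beta,\lambda,C(G))$ a covariant system yields, via $\lambda_t (x)=t\cdot_{\lambda}x$, a family $\{\lambda_t\}$ satisfying the group-action covariance conditions above. Conversely, for (1)$\Rightarrow$(2), a family $\{\lambda_t\}$ witnessing strong Morita equivalence of the actions $\alpha,\beta$ assembles into $\lambda(x)=\sum_{t\in G}\lambda_t (x)\otimes\delta_t$, and running the same substitutions backwards shows that $\lambda$ satisfies Conditions (1)--(5) of Definition~\ref{Def:wcovariant} and ($*$), so that $(A,B,X,\alpha,\beta,\lambda,C(G))$ is a covariant system.

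The only real subtlety, and the step I would carry out most carefully, is the bookkeeping at the level of the inner products. In conditions (3)$'$ and (4)$'$ the twisting factor $S(h_{(2)}^* )$ appears, and one must verify that it collapses to $t$ on group-like elements through $S(t^* )=t$; this is exactly what converts the two-sided Hopf-algebraic inner-product covariance into the single-index covariance $\alpha_t ({}_A\la x,y\ra)={}_A\la\lambda_t (x),\lambda_t (y)\ra$ of a group action. Equivalently, in the tensor-product picture one checks that the $C(G)$-valued inner products on $C(G)$, regarded as an $H^0 -H^0$-equivalence bimodule, give $\delta_s \delta_u^* =\delta_{s,u}\delta_s$, which forces the cross terms to vanish and leaves precisely the diagonal contributions matching $\alpha_t$ and $\beta_t$. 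Once this identification is pinned down, the remaining verifications are the routine coefficient comparisons already carried out above.
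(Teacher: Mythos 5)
Your proposal is correct and follows essentially the same route as the paper: both directions rest on the correspondence $\lambda(x)=\sum_{t\in G}\lambda_t (x)\otimes\delta_t$, $\lambda_t (x)=t\cdot_{\lambda}x$, with the covariance conditions checked on group-like elements (where $S(t^* )=t$ collapses the Hopf-algebraic inner-product conditions to the Raeburn--Williams ones). Your write-up merely makes explicit the ``routine computations'' the paper leaves to the reader, including the multiplicativity $\lambda_t \lambda_s =\lambda_{ts}$ coming from ($*$)$'$ that the paper also records.
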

\begin{proof}We suppose Condition (1). Then by Raeburn and Williams \cite [Definition 7.2]{RW:continuous}, there are
an $A-B$-equivalence bimodule $X$ and an action $u$ of $G$ by linear isomorphisms of $X$
such that
$$
\alpha_t ( \, {}_A \la x, y \ra)={}_A \la u_t (x) \, , \, u_t (y) \ra , \quad
\beta_t ( \la x, y \ra_B )=\la u_t (x) \, , \, u_t (y) \ra_B 
$$
for any $x, y \in X$, $t\in G$. We note that by \cite [Remark 7.3]{RW:continuous}, we have the following equations:
$$
u_t (ax)=\alpha_t (a)u_t (x), \quad u_t (xb)=u_t (x)\beta_t (b)
$$
for any $a\in A$, $b\in B$, $x\in X$, $t\in G$. Let $\lambda$ be a linear map from $X$ to $X\otimes C(G)$ defined by
for any $x\in X$,
$$
\lambda(x)=\sum_{t\in G}u_t (x)\otimes \delta_t .
$$
Then by routine computations, we can see that $\lambda$ is a coaction of $C(G)$ on $X$ with respect to
$(A, B, \alpha, \beta)$.
Hence we obtain Condition (2). Next we suppose Condition (2). Then there are an $A-B$-equivalence
bimodule $X$ and a coaction $\lambda$ of $C(G)$ on $X$ with respect to
$(A, B, \alpha, \beta)$.
We regard $G$ as a subset of $C^* (G)$. For any $t\in G$, we define a linear map $u_t$ on $X$ as
follows: For any $x\in X$, $u_t(x)=t\cdot_{\lambda}x$. Then for any $t, s\in G$, $x\in X$,
$$
u_t (u_s (x))=t\cdot_{\lambda}[s\cdot_{\lambda}x]=ts\cdot_{\lambda}x=u_{ts}(x).
$$
Thus we can see that $u$ is an action of $G$ by linear isomorphisms of $X$, which satisfies the desired
conditions by easy computations. Thus we obtain (1).
\end{proof}
Modifying \cite [Example 7.4(b)]{RW:continuous}, we shall obtain the following lemma, which can give
examples of the strong Morita equivalence of
coactions of a finite dimensional $C^*$-Hopf algebra on a unital $C^*$-algebra.
Before it, we introduce the following definition:

\begin{defin}\label{Def:wexterior}Let $\rho$ and $\sigma$ be weak coactions of $H^0$ on $A$.
We say that $\rho$ is {\it exterior equivalent} to $\sigma$ if
there is a unitary element $w\in A\otimes H^0$ satisfying that
$$
\sigma=\Ad(w)\circ\rho, \quad (\id\otimes\epsilon^0 )(w)=1.
$$
\end{defin}

\begin{lem}\label{lem:wexterior}Let $\rho$ and $\sigma$ be weak coactions of $H^0$ on $A$.
Then the following conditions are equivalent:
\newline
$(1)$ The weak coactions $\rho$ and $\sigma$ are exterior equivalent, 
\newline
$(2)$ The weak coactions $\rho$ and $\sigma$ are strongly Morita equivalent by a weak coaction
$\lambda$ from an $A-A$-equivalence bimodule ${}_A A_A$ to
an $A\otimes H^0 -A\otimes H^0$-equivalence bimodule ${}_{A\otimes H^0}A\otimes {H^0}_{A\otimes H^0}$,
where we regard $A$ and $A\otimes H^0$ as an $A-A$-equivalence bimodule and
an $A\otimes H^0-A\otimes H^0$-equivalence bimodule in the usual way, respectively.
\end{lem}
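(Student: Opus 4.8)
The plan is to prove the two implications separately, with the unitary $w$ of an exterior equivalence and the single element $\lambda(1_A)$ of a weak coaction playing dual roles. Throughout I use that a weak coaction is a unital $*$-homomorphism $\rho\colon A\to A\otimes H^0$ satisfying $(\id\otimes\epsilon^0)\circ\rho=\id_A$, that $\epsilon^0$ is a character on $H^0$ so that $\id\otimes\epsilon^0$ is a unital $*$-homomorphism of $A\otimes H^0$ onto $A$, and the standard equivalence-bimodule structure ${}_A\la a,b\ra=ab^*$, $\la a,b\ra_A=a^*b$ on ${}_A A_A$, and likewise on ${}_{A\otimes H^0}A\otimes H^0_{A\otimes H^0}$.

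For $(1)\Rightarrow(2)$, suppose $\sigma=\Ad(w)\circ\rho$ with $w$ a unitary in $A\otimes H^0$ and $(\id\otimes\epsilon^0)(w)=1$. I would define $\lambda\colon A\to A\otimes H^0$ by $\lambda(a)=\rho(a)w^*$ and verify the five conditions of Definition \ref{Def:wcovariant}. Condition (1) is immediate from multiplicativity of $\rho$; condition (2) follows from $w^*\sigma(b)=w^*w\rho(b)w^*=\rho(b)w^*$; conditions (3) and (4), after expanding the inner products, reduce to $w^*w=1$ and to $w\rho(a^*b)w^*=\sigma(a^*b)$ respectively; and condition (5) follows since $\id\otimes\epsilon^0$ is multiplicative, giving $(\id\otimes\epsilon^0)(\rho(a)w^*)=a\,(\id\otimes\epsilon^0)(w)^*=a$. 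Thus $\lambda$ is a weak coaction making $(A,A,{}_A A_A,\rho,\sigma,\lambda,H^0)$ a weak covariant system.

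For $(2)\Rightarrow(1)$, suppose $\lambda$ is such a weak coaction and set $w=\lambda(1_A)$. Putting $x=1_A$ in conditions (1) and (2) gives $\lambda(a)=\rho(a)w$ and $\lambda(a)=w\sigma(a)$, so $\rho(a)w=w\sigma(a)$ for all $a\in A$. Taking $x=y=1_A$ in conditions (3) and (4), together with $\rho(1)=\sigma(1)=1$, yields $ww^*=1$ and $w^*w=1$, so $w$ is unitary; hence $\sigma(a)=w^*\rho(a)w=\Ad(w^*)(\rho(a))$. Finally condition (5) applied to $1_A$ gives $(\id\otimes\epsilon^0)(w)=1$, whence $(\id\otimes\epsilon^0)(w^*)=1$. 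Therefore $w^*$ implements an exterior equivalence $\sigma=\Ad(w^*)\circ\rho$, proving (1).

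The computations are routine; the only points requiring care are the bookkeeping of which unitary ($w$ or $w^*$) appears inside $\Ad$ and in the formula for $\lambda$, resolved by observing that exterior equivalence through $w$ coincides with exterior equivalence through $w^*$, and the verification that $w=\lambda(1_A)$ is a genuine two-sided unitary rather than merely a partial isometry. This last point is exactly where fullness of the two inner products (conditions (3) and (4) evaluated at $x=y=1_A$) and the unitality of $\rho$ and $\sigma$ are essential, and is the step I expect to be the main obstacle if the convention for weak coactions were to drop unitality.
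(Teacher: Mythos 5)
Your proof is correct and follows essentially the same route as the paper: for $(1)\Rightarrow(2)$ the paper uses the identical formula $\lambda(x)=\rho(x)w^*$, and for $(2)\Rightarrow(1)$ its unitary $w=\theta_{\lambda(1)^*,1\otimes 1^0}\in\End_{A\otimes H^0}(A\otimes {H^0}_{A\otimes H^0})$ is, under the standard identification, exactly your $\lambda(1)^*$, with unitarity and $\sigma=\Ad(w)\circ\rho$ extracted from conditions (1)--(4) just as you do. The only divergence is minor but in your favor: the paper does not invoke condition (5) at this point and instead repairs the normalization afterwards, setting $z=(\id\otimes\epsilon^0 )(w)$, observing that $z$ is a central unitary, and replacing $w$ by $w_1 =w(z^* \otimes 1^0 )$, whereas you read $(\id\otimes\epsilon^0 )(\lambda(1))=1$ directly off condition (5), which renders that correction step unnecessary.
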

\begin{proof}We suppose Condition (1). Then there is a unitary element $w\in A\otimes H^0$ satisfing that
$\sigma=\Ad(w)\circ\rho$ and that $(\id\otimes\epsilon^0 )(w)=1$. Let $\lambda$ be a linear map from
${}_A A_A$ to ${}_{A\otimes H^0}A\otimes {H^0}_{A\otimes H^0}$
defined by $\lambda(x)=\rho(x)w^*$ for any $x\in {}_A A_A$. 
By routine computations, we can see that $\lambda$ is a weak coaction of $H^0$ on ${}_A A_A$
with respect to $(A, A, \rho, \sigma )$. Next, we suppose Condition (2). 
We note that $\lambda$ is a weak coaction of $H^0$ on ${}_A A_A$ with respect to
$(A, A, \rho, \sigma )$. We identify $A\otimes H^0$ with $\End_{A\otimes H^0}(A\otimes {H^0}_{A\otimes H^0})$,
where $\End_{A\otimes H^0}(A\otimes {H^0}_{A\otimes H^0})$ is a $C^*$-algebra
of all right $A\otimes H^0$-module maps on
$A\otimes {H^0}_{A\otimes H^0}$. Let $w=\theta_{\lambda(1)^* ,1\otimes 1^0}$ be a rank-one
operator on $A\otimes {H^0}_{A\otimes H^0}$ induced by $\lambda(1)^*$ and $1\otimes 1^0$.
Then $w$ is a unitary element in $\End_{A\otimes H^0}(A\otimes {H^0}_{A\otimes H^0})$.
Indeed, for any $x\in A\otimes {H^0}_{A\otimes H^0}$
\begin{align*}
ww^* (x) & =\lambda(1)^* (1\otimes 1^0 )\lambda(1)x =\la \lambda (1) \, , \, \lambda(1) \ra_{A\otimes H^0} \,x
=\sigma(1)x=x, \\
w^* w(x) & =\lambda(1)\lambda(1)^* x
={}_{A\otimes H^0} \la \lambda(1) \, , \, \lambda(1) \ra x=\rho(1)x=x .
\end{align*}
Also, for any $a\in A$, $x\in A\otimes {H^0}_{A\otimes H^0}$
\begin{align*}
(w\rho(a)w^* )(x) & =w(\rho(a)\lambda(1)x)
=\lambda(1)^* \lambda(a)x=\la \lambda(1) \, , \, \lambda(a) \ra_{A\otimes H^0}\, x \\
& =\sigma(a)x .
\end{align*}
Thus $w$ is a unitary element in $A\otimes H^0$ and $\sigma=\Ad(w)\circ\rho$.
Furthermore, let $z=(\id\otimes\epsilon^0 )(w)$. Then $z$ is a unitary element in $A$ such that
$az=za$ for any $a\in A$ since $\sigma =\Ad(w)\circ\rho$. Let $w_1 =w(z^* \otimes 1^0 )$. Then
$w_1$ is a unitary element in $A\otimes H^0$ satisfying that $\sigma=\Ad(w_1 )\circ\rho$ and
that $(\id\otimes\epsilon^0 )(w_1 )=1$.
Therefore we obtain Condition (1).
\end{proof}

\begin{lem}\label{lem:exterior}Let $(\rho, u)$ and $(\sigma, v)$ be twisted coactions of $H^0$ on $A$.
Then the following conditions are equivalent:
\newline
$(1)$ The twisted coactions $(\rho, u)$ and $(\sigma, v)$ are exterior equivalent, 
\newline
$(2)$ The twisted coactions $(\rho, u)$ and $(\sigma, v)$ are strongly Morita equivalent by a twisted coaction
$\lambda$ from an $A-A$-equivalence bimodule ${}_A A_A$ to
an $A\otimes H^0 -A\otimes H^0$-equivalence bimodule ${}_{A\otimes H^0}A\otimes {H^0}_{A\otimes H^0}$,
where we regard $A$ and $A\otimes H^0$ as an $A-A$-equivalence bimodule and
an $A\otimes H^0-A\otimes H^0$-equivalence bimodule in the usual way, respectively.
\end{lem}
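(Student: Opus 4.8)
The plan is to run the proof of Lemma~\ref{lem:wexterior} verbatim for everything concerning the underlying weak coactions, and to isolate Condition~$(**)$ of Definition~\ref{Def:twisted} as the only new ingredient, since it is the sole place where the twisting unitaries $u,v$ enter. Recall that the twisted analogue of Definition~\ref{Def:wexterior} declares $(\rho,u)$ and $(\sigma,v)$ exterior equivalent when there is a unitary $w\in A\otimes H^0$ with $\sigma=\Ad(w)\circ\rho$, $(\id\otimes\epsilon^0)(w)=1$, and the cocycle relation $v=(w\otimes 1^0)(\rho\otimes\id)(w)\,u\,(\id\otimes\Delta^0)(w^*)$. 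I shall use repeatedly that for a map of the form $\lambda(a)=\rho(a)w^*$ on ${}_A A_A$ one has $(\lambda\otimes\id)(\xi)=(\rho\otimes\id)(\xi)(w^*\otimes 1^0)$ for every $\xi\in A\otimes H^0$, which is immediate once $\lambda$ is read, under the identification of Lemma~\ref{lem:wexterior}, as left multiplication by $\rho(\cdot)w^*$.

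For $(1)\Rightarrow(2)$ I would take the implementing unitary $w$ and set $\lambda(x)=\rho(x)w^*$, exactly as in Lemma~\ref{lem:wexterior}; that lemma already yields that $\lambda$ is a weak coaction of $H^0$ on ${}_A A_A$ with respect to $(A,A,\rho,\sigma)$, so only $(**)$ remains. Writing $(\lambda\otimes\id)(\lambda(x))=(\rho\otimes\id)(\rho(x))(\rho\otimes\id)(w^*)(w^*\otimes 1^0)$ and substituting the twisted coaction axiom $(\rho\otimes\id)\circ\rho=\Ad(u)\circ(\id\otimes\Delta^0)\circ\rho$ into the first factor, while expanding the other side $u(\id\otimes\Delta^0)(\rho(x)w^*)v^*$ through $\id\otimes\Delta^0$, both sides acquire the common left factor $u(\id\otimes\Delta^0)(\rho(x))$. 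Hence $(**)$ follows once one knows $u^*(\rho\otimes\id)(w^*)(w^*\otimes 1^0)=(\id\otimes\Delta^0)(w^*)v^*$; but this last identity is precisely the defining cocycle relation rewritten. Thus $(**)$ holds and $(A,A,{}_A A_A,\rho,u,\sigma,v,\lambda,H^0)$ is a twisted covariant system.

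For $(2)\Rightarrow(1)$ I would begin with a twisted coaction $\lambda$ as in~(2). Applying Lemma~\ref{lem:wexterior} to the underlying weak coactions produces $w=\theta_{\lambda(1)^*,1\otimes 1^0}$, which in the endomorphism picture is left multiplication by $\lambda(1)^*$, so that $\lambda(1)=w^*$, $\lambda(a)=\rho(a)w^*$, and $\sigma=\Ad(w)\circ\rho$. Here the normalization step of Lemma~\ref{lem:wexterior} is automatic: since $(\id_X\otimes\epsilon^0)\circ\lambda=\id_X$ gives $(\id\otimes\epsilon^0)(\lambda(1))=1$ and $\id\otimes\epsilon^0$ is a $*$-homomorphism, we get $z=(\id\otimes\epsilon^0)(w)=1$, whence $w$ itself already satisfies $(\id\otimes\epsilon^0)(w)=1$. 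Evaluating $(**)$ at $x=1$ now gives $(\rho\otimes\id)(w^*)(w^*\otimes 1^0)=u(\id\otimes\Delta^0)(w^*)v^*$, which rearranges to exactly $v=(w\otimes 1^0)(\rho\otimes\id)(w)\,u\,(\id\otimes\Delta^0)(w^*)$. Therefore $w$ implements the exterior equivalence of $(\rho,u)$ and $(\sigma,v)$, giving~$(1)$.

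The step I expect to require the most care is the bookkeeping in the equivalence of $(**)$ with the cocycle relation: one must track the three tensor legs of $A\otimes H^0\otimes H^0$, apply the twisted coaction axiom for $\rho$ in the correct legs, and keep straight the positions of $u$, $v$ and their adjoints, so that the non-invertible factor $\rho(x)$ (respectively $(\id\otimes\Delta^0)(\rho(x))$) is only ever used as a common left factor and never cancelled illegitimately. Everything else is inherited from Lemma~\ref{lem:wexterior}, and the automatic triviality of the normalization ($z=1$) removes the only place where the twisting data might otherwise have interfered with the construction of the implementing unitary.
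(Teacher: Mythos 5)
Your proof is correct and follows the paper's argument in all essentials: in $(1)\Rightarrow(2)$ you verify Condition $(**)$ for $\lambda(x)=\rho(x)w^*$ exactly as the paper does, via the twisted-coaction axiom $(\rho\otimes\id)\circ\rho=\Ad(u)\circ(\id\otimes\Delta^0)\circ\rho$ and the cocycle relation rewritten as $u^*(\rho\otimes\id)(w^*)(w^*\otimes 1^0)=(\id\otimes\Delta^0)(w^*)v^*$, and in $(2)\Rightarrow(1)$ you use the same rank-one unitary $w=\theta_{\lambda(1)^*,1\otimes 1^0}$ inherited from Lemma \ref{lem:wexterior}. Your only deviations are harmless streamlinings: you read off $w^*=\lambda(1)$ and evaluate $(**)$ at $x=1$ directly, where the paper reaches the identical identity $(\rho\otimes\id)(w^*)(w^*\otimes 1^0)=u(\id\otimes\Delta^0)(w^*)v^*$ through a longer inner-product computation, and your observation that the normalization $(\id\otimes\epsilon^0)(w)=1$ is automatic from $(\id_X\otimes\epsilon^0)\circ\lambda=\id_X$ is correct (the paper's own proof of this lemma does not address normalization at all).
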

\begin{proof}We suppose Condition (1). Then there is a unitary element $w\in A\otimes H^0$ such that
$$
\sigma =Ad(w)\circ\rho \, , \quad v=(w\otimes 1^0 )(\rho\otimes\id)(w)u(\id\otimes\Delta^0 )(w^* ) .
$$
Let $\lambda$ be as in the proof of Lemma \ref{lem:wexterior}.
Then for any $x\in{}_A A_A$,
\begin{align*}
((\lambda\otimes\id)\circ\lambda)(x) & =u(\id\otimes\Delta^0)(\rho(x))u^* (\rho\otimes\id)(w^* )(w^* \otimes1^0 ) \\
& =u(\id\otimes\Delta^0 )(\rho(x))(\id\otimes\Delta^0 )(w^* )v^* \\
& =u(\id\otimes\Delta^0 )(\lambda(x))v^* .
\end{align*}
Thus by Lemma \ref{lem:wexterior}, $\lambda$ is a twisted coaction of $H^0$ on ${}_A  A_A$ with respect to
$(A, A, \rho, u, \sigma, v)$.
Next, we suppose Condition (2). We note that $\lambda$ is a twisted coaction of $H^0$ on ${}_A A_A$ with respect
to $(A, A, \rho, u, \sigma, v)$.
We identify $A\otimes H^0$ with $\End_{A\otimes H^0}(A\otimes {H^0}_{A\otimes H^0})$,
where $\End_{A\otimes H^0}(A\otimes {H^0}_{A\otimes H^0})$ is a $C^*$-algebra
of all right $A\otimes H^0$-module maps on
$A\otimes {H^0}_{A\otimes H^0}$. Let $w=\theta_{\lambda(1)^* ,1\otimes 1^0}$ be a rank-one
operator on $A\otimes {H^0}_{A\otimes H^0}$ induced by $\lambda(1)^*$ and $1\otimes 1^0$.
Then $w$ is a unitary element in $\End_{A\otimes H^0}(A\otimes {H^0}_{A\otimes H^0})$
such that $\sigma=\Ad(w)\circ\rho$ by Lemma \ref{lem:wexterior}.
We note that $w^* ={}_{A\otimes H^0} \la \lambda(1) \, , \, 1\otimes 1^0 \ra$.
Indeed, for any $x\in A\otimes {H^0}_{A\otimes H^0}$
$$
w^* x =(1\otimes 1^0 )\la \lambda(1)^* \,  , \, x \ra_{A\otimes H^0}
=\lambda(1)x
={}_{A\otimes H^0} \la \lambda(1) \, , \, 1\otimes 1^0 \ra x .
$$
Hence $w^* ={}_{A\otimes H^0} \la \lambda(1) \, , \, 1\otimes 1^0 \ra$.
Thus
\begin{align*}
(\rho\otimes\id)(w^* ) & =(\rho\otimes\id)({}_{A\otimes H^0} \la \lambda(1) \, , \, 1\otimes 1^0 \ra ) \\
& ={}_{A\otimes H^0 \otimes H^0} \la ((\lambda\otimes\id)\circ\lambda)(1) \, , \, \lambda(1)\otimes 1^0 \ra \\
& ={}_{A\otimes H^0 \otimes H^0} \la u((\id\otimes\Delta^0 )\circ\lambda)(1)v^* \, , \, \lambda(1)\otimes 1^0 \ra \\
& =u((\id\otimes\Delta^0 )\circ\lambda)(1)v^* (\lambda(1)^* \otimes 1^0 ).
\end{align*}
It follows that
\begin{align*}
& (\rho\otimes\id)(w^* )(w^* \otimes 1^0 ) \\
& =u((\id\otimes\Delta^0 )\circ\lambda)(1)v^* (\lambda(1)^* \otimes 1^0 )
({}_{A\otimes H^0} \la \lambda(1) \, , \, 1\otimes 1^0 \ra\otimes 1^0 ) \\
& =u((\id\otimes\Delta^0 )\circ\lambda)(1)v^* (\la \lambda(1) \, , \,  \lambda(1) \ra_{A\otimes H^0}\otimes 1^0 ) \\
& =u((\id\otimes\Delta^0 )\circ\lambda)(1)v^* (\sigma(\la 1 \, , \, 1 \ra_A )\otimes 1^0  ) \\
& =u(\id\otimes\Delta^0 )(\lambda(1))v^* =u(\id\otimes\Delta^0 )({}_{A\otimes H^0 }
\la \lambda(1) \, , \, 1\otimes 1^0 \ra)v^* \\
& =u(\id\otimes\Delta^0 )(w^* )v^* .
\end{align*}
Thus $v=(w\otimes 1^0 )(\rho\otimes\id)(w)u(\id\otimes\Delta^0 )(w^* )$.
Therefore we obtain Condition (1).
\end{proof}

Next, we discuss on relations between innerness, outerness and strong Morita equivalence.
Let $\rho_{H^0}^A$ be the trivial coaction of $H^0$ on $A$.

\begin{lem}\label{lem:inner}$(i)$ Let $\rho$ be a weak coaction of $H^0$ on $A$. Then the following
conditions are equivalent:
\newline
$(1)$ The weak coaction $\rho$ is inner,
\newline
$(2)$ The weak coaction $\rho$ is strongly Morita equivalent to $\rho_{H^0}^A$.
\newline
$(ii)$ Let $\rho$ be a coaction of $H^0$ on $A$. Then the following
conditions are equivalent:
\newline
$(1)$ The coaction $\rho$ is strongly inner,
\newline
$(2)$ The coaction $\rho$ is strongly Morita equivalent to $\rho_{H^0}^A$.
\end{lem}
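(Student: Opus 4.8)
The plan is to recognise both equivalences as instances of exterior equivalence with the trivial coaction, reading innerness of $\rho$ (resp. strong innerness) as exterior equivalence of $\rho$ with $\rho_{H^0}^A$ in the sense of Definition \ref{Def:wexterior}; since $\rho_{H^0}^A$ is itself a genuine coaction, the only difference between (i) and (ii) is the passage from weak coactions to coactions. With this reading, the implication $(1)\Rightarrow(2)$ is immediate in both parts. If $\rho$ is inner, then $\rho$ is exterior equivalent to $\rho_{H^0}^A$, so Lemma \ref{lem:wexterior} produces a weak coaction on the $A-A$-equivalence bimodule ${}_A A_A$ implementing the strong Morita equivalence of $\rho$ and $\rho_{H^0}^A$; as ${}_A A_A$ is in particular an $A-A$-equivalence bimodule, condition $(2)$ of Definition \ref{Def:morita}(i) holds. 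For part (ii) one argues identically, invoking Lemma \ref{lem:exterior} in place of Lemma \ref{lem:wexterior}.

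For the reverse implication $(2)\Rightarrow(1)$ in part (i), suppose $X$ is an $A-A$-equivalence bimodule carrying a weak coaction $\lambda$ of $H^0$ with respect to $(A,A,\rho,\rho_{H^0}^A)$. On the Hilbert $A\otimes H^0$-module $X\otimes H^0$ I would define $V(x\otimes\phi)=\lambda(x)(1\otimes\phi)$. Because the right coaction is trivial, condition $(4)$ of Definition \ref{Def:wcovariant} reads $\la\lambda(x),\lambda(y)\ra_{A\otimes H^0}=\la x,y\ra_A\otimes 1^0$, from which a short computation shows $\la V(x\otimes\phi),V(y\otimes\psi)\ra_{A\otimes H^0}=\la x\otimes\phi,y\otimes\psi\ra_{A\otimes H^0}$, so $V$ is a right $A\otimes H^0$-module isometry. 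By Lemma \ref{lem:tensor}, $X\otimes H^0$ is an $A\otimes H^0-A\otimes H^0$-equivalence bimodule, so once $V$ is known to be unitary it is implemented by a unitary $w\in A\otimes H^0$ acting through the left module action, exactly as in the identification $A\otimes H^0\cong\End_{A\otimes H^0}(A\otimes{H^0}_{A\otimes H^0})$ used in the proof of Lemma \ref{lem:wexterior}. Condition $(1)$, $\lambda(ax)=\rho(a)\lambda(x)$, then gives $V(a\otimes 1^0)=\rho(a)V$, i.e. $w(a\otimes 1^0)w^*=\rho(a)$, so $\rho=\Ad(w)\circ\rho_{H^0}^A$; setting $z=(\id\otimes\epsilon^0)(w)$ and replacing $w$ by $w(z^*\otimes 1^0)$ normalises $(\id\otimes\epsilon^0)(w)=1$ just as at the end of the proof of Lemma \ref{lem:wexterior}, whence $\rho$ is inner.

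The main obstacle is proving that the isometry $V$ is actually unitary, i.e. that the range projection $p=VV^*$ equals $1$; this is not automatic for isometries of finitely generated modules over a general unital $C^*$-algebra. Here I would use condition $(3)$, $\rho({}_A\la x,y\ra)={}_{A\otimes H^0}\la\lambda(x),\lambda(y)\ra$, together with the fullness of $X$. Since $\lambda(x)=V(x\otimes 1^0)$ lies in the range of $V$, the closed span of ${}_{A\otimes H^0}\la\,\cdot,\cdot\,\ra$ over the range equals $p(A\otimes H^0)p$ and contains every $\rho({}_A\la x,y\ra)$, hence all of $\rho(A)$ by fullness; in particular it contains $\rho(1_A)=1_{A\otimes H^0}$, which forces $p=1$. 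Thus the counit condition on $\lambda$ is not even needed for unitarity of $V$, only conditions $(3)$, $(4)$ and the unitality of $\rho$.

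For part (ii) the construction of $w$ is verbatim the same, and the additional point is to recover the cocycle identity characterising exterior equivalence of genuine coactions. Here I would feed the coassociativity $(*)$ of $\lambda$, namely $(\lambda\otimes\id)\circ\lambda=(\id\otimes\Delta^0)\circ\lambda$, into the expression for $(\rho\otimes\id)(w^*)$ and reduce it exactly as in the chain of identities computing $(\rho\otimes\id)(w^*)(w^*\otimes 1^0)$ in the proof of Lemma \ref{lem:exterior} (now with trivial two-cocycle $v=1$). This yields that $w$ satisfies the coaction cocycle relation, so $\rho=\Ad(w)\circ\rho_{H^0}^A$ is exterior equivalent to $\rho_{H^0}^A$ as genuine coactions, i.e. $\rho$ is strongly inner. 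The only delicate bookkeeping is keeping the Sweedler legs of $\Delta^0$ consistent throughout, which is the genuine-coaction analogue of the manipulation already carried out in Lemma \ref{lem:exterior}.
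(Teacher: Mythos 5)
Your direction $(1)\Rightarrow(2)$ in both parts is exactly the paper's: read innerness (resp.\ strong innerness) as exterior equivalence with $\rho_{H^0}^A$ and invoke Lemma \ref{lem:wexterior} (resp.\ Lemma \ref{lem:exterior}). (The paper does insert one small step you gloss over: it first normalizes the implementing unitary so that $(\id\otimes\epsilon^0)(w)=1$, ``in the same way as'' the end of the proof of Lemma \ref{lem:wexterior}; your $z$-trick is the same device, so nothing is lost.)

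For $(2)\Rightarrow(1)$ in part (i) you take a genuinely different, global route. The paper works pointwise in $h$: it sets $\widehat{w}(h)x=h\cdot_{\lambda}x$, uses triviality of the right coaction to see $\widehat{w}(h)\in\End_A(X)\cong A$, proves $w^*w=1\otimes 1^0$ by a Sweedler computation from condition $(4)'$, gets $\rho=\Ad(w)\circ\rho_{H^0}^A$ from condition $(3)'$ and fullness of the left inner product, and then $ww^*=\rho(1)=1\otimes 1^0$. You instead bundle all $h$-components into a single operator $V(x\otimes\phi)=\lambda(x)(1\otimes\phi)$ on $X\otimes H^0$, get the isometry property from condition $(4)$, surjectivity from condition $(3)$ plus fullness, and extract $w$ from $\End_{A\otimes H^0}(X\otimes H^0)\cong A\otimes H^0$ (Lemma \ref{lem:tensor}). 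These produce the same unitary: pairing your $V$ against $h\in H$ gives precisely $\widehat{w}(h)x=h\cdot_{\lambda}x$. Your surjectivity idea is correct and is the right substitute for the paper's $w^*w$/$ww^*$ computations; the only presentational flaw is that writing $p=VV^*$ before unitarity presupposes that the isometry $V$ is adjointable. That does hold here (over the unital algebra $A\otimes H^0$ the module $X\otimes H^0$ is finitely generated projective, hence self-dual), but you can avoid the point entirely: the closed range $M$ is a closed right submodule, the associativity relation ${}_{A\otimes H^0}\la m\,,\,m'\ra\,\xi=m\la m'\,,\,\xi\ra_{A\otimes H^0}$ gives ${}_{A\otimes H^0}\la M,M\ra\,(X\otimes H^0)\subseteq M$, and your computation puts $1$ in the closed span of ${}_{A\otimes H^0}\la M,M\ra$, so $M=X\otimes H^0$ and a surjective isometry of Hilbert modules is automatically unitary.

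The one genuine gap is in part (ii). You propose to obtain the cocycle identity by reducing ``exactly as in the chain of identities computing $(\rho\otimes\id)(w^*)(w^*\otimes 1^0)$ in the proof of Lemma \ref{lem:exterior}.'' That chain is not available verbatim: it starts from the formula $w^*={}_{A\otimes H^0}\la\lambda(1)\,,\,1\otimes 1^0\ra$, which uses the distinguished vector $1\in{}_AA_A$, whereas in Lemma \ref{lem:inner}(ii) the bimodule $X$ is an arbitrary $A$--$A$-equivalence bimodule with no such vector. One could rebuild the computation (e.g.\ from $w({}_A\la x\,,\,y\ra\otimes 1^0)={}_{A\otimes H^0}\la\lambda(x)\,,\,y\otimes 1^0\ra$, summing over a finite family with $\sum_i{}_A\la y_i\,,\,x_i\ra=1$), but as written the step would fail, and you have not supplied the adaptation. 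The paper's proof shows the repair is a one-liner inside your own framework: since $\widehat{w}(h)x=h\cdot_{\lambda}x$, coassociativity $(*)'$ gives $\widehat{w}(h)\widehat{w}(l)x=h\cdot_{\lambda}[l\cdot_{\lambda}x]=hl\cdot_{\lambda}x=\widehat{w}(hl)x$ for all $x$, so $\widehat{w}$ is multiplicative, which is exactly the cocycle relation $(\id\otimes\Delta^0)(w)=(w\otimes 1^0)(\rho_{H^0}^A\otimes\id)(w)$ with trivial $u$; hence $\rho$ is strongly inner. Replace your Lemma-\ref{lem:exterior}-style reduction by this multiplicativity observation and part (ii) closes.
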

\begin{proof}(i) We suppose that $\rho$ is inner. Then we can see that there is a unitary element $w\in A\otimes H^0$
satisfying that $\rho=\Ad(w)\circ\rho_{H^0}^A$ and that $(\id\otimes\epsilon^0 )(w)=1$ in the same way as in the
proof that Condition (2) implies Condition (1) in Lemma \ref{lem:wexterior}.
Thus $\rho$ is exterior equivalent to $\rho_{H^0}^A$.
Hence by Lemma \ref{lem:wexterior}, $\rho$ is strongly Morita equivalent to $\rho_{H^0}^A$.
Next we suppose that $\rho$ is strongly Morita equivalent to $\rho_{H^0}^A$. Then there are
an $A-A$-equivalence bimodule $X$ and a weak coaction $\lambda$ of $H^0$ on $X$ with respect to
$(A, A, \rho, \rho_{H^0}^A )$. We note that for any $a\in A$, $x\in X$,
$$
\lambda(xa)=\lambda(x)\rho_{H^0}^A (a)=\lambda(x)(a\otimes 1^0 ).
$$
For any $h\in H$, let $\widehat{w}(h)$ be a linear map on $X$ defined by for any $x\in X$,
$$
\widehat{w}(h)x=h\cdot_{\lambda}x .
$$
Then by the above discussion, $\widehat{w}(h)\in \End_A (X)$, where $\End_A (X)$ is a $C^*$-algebra
of all right $A$-module maps on $X$. Since $X$ is an $A-A$-equivalence bimodule, we can identify
$\End_A (X)$ with $A$ and we can regard $\widehat{w}(h)$ as an element in $A$ for any $h\in H$.
Furthermore, since the map $h\mapsto\widehat{w}(h)$ is linear, $\widehat{w}\in\Hom(H, A)$.
Let $w$ be the element in $A\otimes H^0$ induced by $\widehat{w}\in\Hom(H, A)$. By the definition
of $w$, clearly $\widehat{w}(1)=1$.
We show that $w$ is a unitary element in $A\otimes H^0$ such that $\rho=\Ad(w)\circ\rho_{H^0}^A$.
For any $x, y\in X$, $h\in H$,
\begin{align*}
& \la (\widehat{w^* }\widehat{w})(h)x \, , \, y\ra_A =\la \widehat{w}(h_{(2)})x \, , \, \widehat{w}(S(h_{(1)}^* ))y \ra_A \\
& =\la [h_{(2)}\cdot_{\lambda}x] \, ,\, [S(h_{(1)}^* )\cdot_{\lambda}y] \ra_A
=S(h^* )\cdot_{\rho_{H^0}^A} \la x , y \ra_A =\la \epsilon(h)x \, , \, y \ra_A .
\end{align*}
Thus $w^* w=1\otimes 1^0$.
Also, for any $x, y\in X$, $h\in H$,
\begin{align*}
h\cdot_{\rho} {}_A \la x, y \ra & ={}_A \la [h_{(1)}\cdot_{\lambda}x] \, , \, [S(h_{(2)}^* )\cdot_{\lambda}y] \ra
= {}_A \la \widehat{w}(h_{(1)})x \, , \, \widehat{w}(S(h_{(2)}^* ))y \ra \\
& =\widehat{w}(h_{(1)}){}_A \la x , y \ra\widehat{w^* }(h_{(2)}) .
\end{align*}
Hence $\rho=\Ad(w)\circ\rho_{H^0}^A$ since $X$ is an $A-A$-equivalence bimodule. Thus
$ww^* =w\rho_{H^0}^A (1)w^* =\rho(1)=1\otimes 1^0$.
Therefore, the weak coaction $\rho$ is inner.
\newline
(ii) We suppose that a coaction $\rho$ is strongly inner. Then $\rho$ is exterior equivalent to $\rho_{H^0}^A$.
Hence by Lemma \ref{lem:exterior}, $\rho$ is strongly Morita equivalent to $\rho_{H^0}^A$.
Next, we suppose that $\rho$ is strongly Morita equivalent to $\rho_{H^0}^A$.
Then there are
an $A-A$-equivalence bimodule $X$ and a coaction $\lambda$ of $H^0$ on $X$ with respect to
$(A, A, \rho, \rho_{H^0}^A )$. Let $w$ be as in (i). It suffices to show that for any $h, l\in H$,
$\widehat{w}(hl)=\widehat{w}(h)\widehat{w}(l)$. For any $x\in X$, $h, l\in H$,
$$
\widehat{w}(h)\widehat{w}(l)x=h\cdot_{\lambda}[l\cdot_{\lambda}x]=hl\cdot_{\lambda}x=\widehat{w}(hl).
$$
Therefore, $\rho$ is strongly inner.
\end{proof}

Let $\rho_{H^0}^A$ and $\rho_{H^0}^B$ be the trivial coactions of $H^0$ on $A$ and $B$, respectively.
We suppose that $A$ and $B$ are strongly Morita equivalent and let $X$ be an $A-B$-equivalence bimodule.
Then $\rho_{H^0}^A$ and $\rho_{H^0}^B$ are strongly Morita equivalent. If a linear map $\lambda_{H^0}^X$
from $X$ to $X\otimes H^0$ is defined by $\lambda_{H^0}^X (x)=x\otimes 1^0$ for any $x\in X$,
then the $\lambda_{H^0}^X$ is a coaction of $H^0$ on $X$ with respect to $(A, B, \rho_{H^0}^A , \rho_{H^0}^B )$.

\begin{cor}\label{cor:inner}$(i)$ Let $\rho$ and $\sigma$ be weak coactions of $H^0$ on $A$ and $B$, respectively. If
$\rho$ is strongly Morita equivalent to $\sigma$, then
$\rho$ is inner if and only if so is $\sigma$.
\newline
$(ii)$ Let $\rho$ and $\sigma$ be coactions of $H^0$ on $A$ and $B$, respectively. If $\rho$ is strongly Morita equivalent to
$\sigma$, then
$\rho$ is strongly inner if and only if so is $\sigma$.
\end{cor}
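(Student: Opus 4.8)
The plan is to deduce both parts purely formally, with no fresh computation, from Lemma \ref{lem:inner}, from the fact that strong Morita equivalence is an equivalence relation, and from the observation recorded in the paragraph just before the statement: namely that the trivial coactions $\rho_{H^0}^A$ and $\rho_{H^0}^B$ are strongly Morita equivalent whenever $A$ and $B$ are, the connecting map being $\lambda_{H^0}^X(x)=x\otimes 1^0$.

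For (i), suppose the weak coactions $\rho$ and $\sigma$ are strongly Morita equivalent. By definition this supplies an $A-B$-equivalence bimodule, so $A$ and $B$ are strongly Morita equivalent; hence, by the remark preceding the corollary, $\rho_{H^0}^A$ and $\rho_{H^0}^B$ are strongly Morita equivalent. Now assume $\rho$ is inner. By Lemma \ref{lem:inner}(i) this is exactly the statement that $\rho$ is strongly Morita equivalent to $\rho_{H^0}^A$. I would then chain the three equivalences: $\sigma$ is strongly Morita equivalent to $\rho$ (symmetry), $\rho$ is strongly Morita equivalent to $\rho_{H^0}^A$, and $\rho_{H^0}^A$ is strongly Morita equivalent to $\rho_{H^0}^B$; by transitivity from Proposition \ref{prop:relation}, $\sigma$ is strongly Morita equivalent to $\rho_{H^0}^B$. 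A final application of Lemma \ref{lem:inner}(i) then gives that $\sigma$ is inner. Interchanging the roles of $\rho$ and $\sigma$ yields the converse, so $\rho$ is inner if and only if $\sigma$ is.

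For (ii), the argument is word for word the same, with \emph{weak coaction} replaced by \emph{coaction} and \emph{inner} by \emph{strongly inner}: one invokes Lemma \ref{lem:inner}(ii) in place of (i), and one uses that $\lambda_{H^0}^X$ is a genuine coaction (not merely a weak one), so that $\rho_{H^0}^A$ and $\rho_{H^0}^B$ are strongly Morita equivalent \emph{as coactions}. Here transitivity and symmetry must be applied in the category of coactions; this is the untwisted special case (with $u$ and $v$ the trivial cocycle $1\otimes 1^0\otimes 1^0$) of Corollary \ref{cor:relation}, or equivalently follows by the same routine computation as Proposition \ref{prop:relation} once one checks that condition $(*)'$ is preserved under the tensor-product construction $h\cdot_{\lambda\otimes\mu}(x\otimes y)=[h_{(1)}\cdot_{\lambda}x]\otimes[h_{(2)}\cdot_{\mu}y]$.

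Since every step is a black-box invocation of a previously established result, there is essentially no obstacle. The only point that requires a moment's care is the bookkeeping in part (ii): one must ensure that each link in the chain is a strong Morita equivalence of \emph{genuine} coactions rather than merely of weak coactions, so that the correct version of Lemma \ref{lem:inner} applies at the final step.
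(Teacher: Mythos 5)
Your proposal is correct and follows essentially the same route as the paper: the paper's proof likewise chains the strong Morita equivalences $\sigma\sim\rho\sim\rho_{H^0}^A\sim\rho_{H^0}^B$ using Lemma \ref{lem:inner}, Proposition \ref{prop:relation} (resp.\ Corollary \ref{cor:relation} for part (ii), with coactions viewed as twisted coactions with trivial cocycle), and the preceding remark that $\lambda_{H^0}^X(x)=x\otimes 1^0$ makes the trivial coactions strongly Morita equivalent. Your extra care that each link in part (ii) is an equivalence of genuine coactions, so that Lemma \ref{lem:inner}(ii) applies, is exactly the bookkeeping the paper's citation of Corollary \ref{cor:relation} implicitly handles.
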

\begin{proof}(i) We suppose that $\rho$ is inner. Then
$\sigma$ is strongly Morita equivalent to $\rho_{H^0}^B$ by Lemma \ref{lem:inner}(i), Proposition
\ref{prop:relation} and the above discussion. Therefore, $\sigma$ is inner by Lemma \ref{lem:inner}(i).
\newline
(ii) We suppose that $\rho$ is strongly inner. Then $\sigma$ is strongly Morita equivalent to $\rho_{H^0}^B$
by Lemma \ref{lem:inner}(ii), Corollary \ref{cor:relation} and the above discussion. Therefore, $\sigma$ is
strongly inner by Lemma \ref{lem:inner}(ii).
\end{proof}

\begin{prop}\label{prop:outer}We suppose that $H^0$ is not trivial.
Let $\rho$ and $\sigma$ be coactions of $H^0$ on $A$ and $B$, respectively. If
$\rho$ is strongly Morita equivalent to $\sigma$, then
$\rho$ is outer if and only if so is $\sigma$.
\end{prop}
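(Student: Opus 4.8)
Because strong Morita equivalence of coactions is symmetric by Proposition~\ref{prop:relation}, it is enough to prove one implication, say that outerness of $\rho$ forces outerness of $\sigma$, and I would argue the contrapositive. I take outerness in the expected pointwise sense in which it is defined here: $\rho$ is outer if and only if there is no nonzero $\rho$-invariant central projection $p$ of $A$ for which the reduced coaction on $Ap$ is inner. Thus, assuming $\sigma$ is not outer, I fix a nonzero $\sigma$-invariant central projection $q$ of $B$ whose reduced coaction $\sigma^q$ on $Bq$ is inner, and the target is to manufacture a nonzero $\rho$-invariant central projection $p$ of $A$ with $\rho^p$ inner, which then exhibits $\rho$ as not outer.

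The mechanism is to transport $q$ across the equivalence bimodule $X$ implementing the equivalence, together with its coaction $\lambda$, where $(A,B,X,\rho,\sigma,\lambda,H^0)$ is a covariant system. A strong Morita equivalence induces a canonical isomorphism of centers $Z(A)\cong Z(B)$, and I would first check that under this isomorphism $\rho$ and $\sigma$ correspond on the centers, so that the $\sigma$-invariant projection $q$ matches a $\rho$-invariant projection $p\in Z(A)$; the verification uses that $\lambda$ intertwines the module actions with $\rho$ and $\sigma$ through Conditions (1)--(4) of Definition~\ref{Def:wcovariant}. Because $p$ and $q$ are central and correspond, $pX=Xq$ is an $Ap$--$Bq$-equivalence bimodule, and I would restrict $\lambda$ to $pX$, confirming that the restriction is again a weak coaction and that $(Ap,Bq,pX,\rho^p,\sigma^q,\lambda|_{pX},H^0)$ is a weak covariant system. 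This shows that $\rho^p$ is strongly Morita equivalent to $\sigma^q$.

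Now $\sigma^q$ is inner by the choice of $q$, so Corollary~\ref{cor:inner}(i), applied to the reduced coactions, gives that $\rho^p$ is inner. Hence $\rho$ admits a nonzero invariant central projection with inner reduced coaction, that is, $\rho$ is not outer, which is the contrapositive I wanted.

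The main obstacle is the compatibility bookkeeping in the middle step: showing that the center isomorphism coming from $X$ actually intertwines $\rho$ and $\sigma$, so that $\sigma$-invariant projections are sent to $\rho$-invariant ones, and that $\lambda$ restricts to a bona fide weak coaction on $pX$ satisfying the covariant-system axioms of Definition~\ref{Def:wcovariant}. Once this descent to the reduced data is in hand, the conclusion is immediate from Corollary~\ref{cor:inner}. I would expect the hypothesis that $H^0$ is nontrivial to enter only in guaranteeing that the outerness used here is the genuine strengthening of ``not inner,'' rather than in the transport argument itself; an alternative route phrasing outerness through the relative commutant $A'\cap(A\rtimes_\rho H)$ would reduce similarly, after matching relative commutants via the equivalence bimodule $X\rtimes_\lambda H$, but the reduced-coaction formulation above keeps everything inside the already-established Corollary~\ref{cor:inner}.
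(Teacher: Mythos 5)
There is a genuine gap, and it lies at the very first step: you have guessed the definition of outerness, and your guess does not match the notion the paper is using. In this series of papers (see \cite{KT2:coaction}), a coaction $\rho$ of $H^0$ on $A$ is \emph{outer} when for every surjective $C^*$-Hopf algebra homomorphism $\pi$ of $H^0$ onto a \emph{non-trivial} $C^*$-Hopf algebra $K^0$, the weak coaction $(\id\otimes\pi)\circ\rho$ of $K^0$ on $A$ fails to be inner; the hypothesis that $H^0$ is not trivial is exactly what makes this quantification non-vacuous. Your ``pointwise'' formulation --- no nonzero $\rho$-invariant central projection $p$ with $\rho^p$ inner on $Ap$ --- is the proper-outerness notion from group actions on ideals/central summands, which is a different condition (for $H^0=C(G)$ the paper's notion quantifies over subgroups via quotient Hopf algebras, not over invariant direct summands), and no equivalence between the two is established anywhere. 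Your own closing remark is the red flag: in your argument the nontriviality of $H^0$ plays no role at all, whereas in the paper it is built into the definition being transported. Consequently your proof, even if every step were filled in, proves a different proposition.

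You did identify the right engine: the paper's proof also reduces everything to Corollary~\ref{cor:inner}(i). But the correct reduction is through quotient Hopf algebras rather than central cutdowns: assuming $\rho$ outer and $(\id\otimes\pi)\circ\sigma$ inner for some surjection $\pi:H^0\to K^0$ with $K^0$ nontrivial, one observes that $(\id_X\otimes\pi)\circ\lambda$ makes $(\id\otimes\pi)\circ\rho$ and $(\id\otimes\pi)\circ\sigma$ strongly Morita equivalent weak coactions of $K^0$, so Corollary~\ref{cor:inner}(i) forces $(\id\otimes\pi)\circ\rho$ to be inner, a contradiction --- no bimodule surgery needed. I would also note that even within your framework the middle step is not routine bookkeeping: from $px=xq$ for all $x\in X$ one gets, via Conditions (1)--(4) of Definition~\ref{Def:wcovariant}, identities such as $\rho(p)\lambda(x)=\lambda(x)(q\otimes 1^0)$, but $\rho(p)$ is only known to commute with $\rho(A)$, not with $A\otimes H^0$, so deducing $\rho(p)=p\otimes 1^0$ (i.e.\ that the center isomorphism sends $\sigma$-invariant projections to $\rho$-invariant ones) requires an actual argument that your sketch does not supply.
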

\begin{proof} We suppose that $\rho$ is outer. We show that $\sigma$ is outer.
Let $\pi$ be a surjective $C^*$-Hopf algebra homomorphism of $H^0$ onto
a non-trivial $C^*$-Hopf algebra $K^0$. We suppose that $(\id\otimes\pi)\circ\sigma$ is inner.
Then $(\id\otimes\pi)\circ\sigma$ is strongly Morita equivalent to $(\id\otimes\pi)\circ\rho$
by easy computations. Thus by Corollary \ref{cor:inner}(i), $(\id\otimes\pi)\circ\rho$ is inner.
This is a contradiction. Therefore, we obtain the conclusion.
\end{proof}

Furthermore, we have also the following easy lemma:

\begin{lem}\label{lem:ample}Let $(\rho, u)$ be a twisted coaction of $H^0$ on $A$ and let
$(\rho\otimes\id, u\otimes I_n )$ be a twisted coaction of $H^0$ on $A\otimes M_n (\BC)$, where $n$ is
any positive integer and we identify $A\otimes H^0 \otimes M_n (\BC)$ with $A\otimes M_n (\BC)\otimes H^0$.
Then $(\rho, u)$ is strongly Morita equivalent to $(\rho\otimes\id, u\otimes I_n )$.
\end{lem}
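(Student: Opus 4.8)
The plan is to exhibit the standard Morita equivalence between $A$ and $A\otimes M_n (\BC)=M_n (A)$ and equip it with the coordinatewise coaction. Concretely, let $X=A^n$ be the $A-A\otimes M_n (\BC)$-equivalence bimodule consisting of row vectors $\xi=(\xi_1 ,\dots,\xi_n )$ with $\xi_i \in A$, where $A$ acts on the left diagonally, $A\otimes M_n (\BC)$ acts on the right by matrix multiplication, and the inner products are ${}_A \la\xi,\eta\ra=\sum_i \xi_i \eta_i^*$ and $\la\xi,\eta\ra_{A\otimes M_n (\BC)}=(\xi_i^* \eta_j )_{i,j}$. Using the identification $X\otimes H^0 =(A\otimes H^0 )^n$ (so that $X\otimes H^0$ is the $A\otimes H^0 -(A\otimes M_n (\BC))\otimes H^0$-equivalence bimodule of Lemma \ref{lem:tensor}), I would define $\lambda\colon X\to X\otimes H^0$ by applying $\rho$ in each coordinate, that is $\lambda(\xi)=(\rho(\xi_1 ),\dots,\rho(\xi_n ))$.

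The first task is to check that $(A, A\otimes M_n (\BC), X, \rho, \rho\otimes\id, \lambda, H^0 )$ is a weak covariant system, i.e.\ Conditions (1)--(5) of Definition \ref{Def:wcovariant}. Each of these reduces coordinatewise to a property of $\rho$: covariance with the left and right module actions holds because $\rho$ is a unital $*$-homomorphism and, under the identification $A\otimes M_n (\BC)\otimes H^0 \cong A\otimes H^0 \otimes M_n (\BC)$, the right coaction $\rho\otimes\id$ is simply the matrix $(\rho(b_{ij} ))_{i,j}$; compatibility with the two inner products follows from $\rho$ being a $*$-homomorphism; and Condition (5) is immediate from the counit identity $(\id\otimes\epsilon^0 )\circ\rho=\id_A$. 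These are all routine verifications.

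The substance of the lemma is Condition ($**$) of Definition \ref{Def:twisted} with $v=u\otimes I_n$. Here I would compute both sides coordinatewise. Under the identifications $X\otimes H^0 \otimes H^0 =(A\otimes H^0 \otimes H^0 )^n$, the map $\lambda\otimes\id$ acts as $\rho\otimes\id$ in each coordinate, so the $i$-th coordinate of $(\lambda\otimes\id)(\lambda(\xi))$ is $(\rho\otimes\id)(\rho(\xi_i ))$, while that of $(\id\otimes\Delta^0 )(\lambda(\xi))$ is $(\id\otimes\Delta^0 )(\rho(\xi_i ))$. The key point is that the left action of $u\in A\otimes H^0 \otimes H^0$ is diagonal and the right action of $v^* =u^* \otimes I_n$ is by the diagonal matrix $\mathrm{diag}(u^* ,\dots,u^* )$, so that the $i$-th coordinate of $u(\id\otimes\Delta^0 )(\lambda(\xi))v^*$ is exactly $u(\id\otimes\Delta^0 )(\rho(\xi_i ))u^*$. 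Condition ($**$) then follows coordinatewise from Condition (1) of the twisted coaction $(\rho,u)$, namely $(\rho\otimes\id)\circ\rho=\Ad(u)\circ(\id\otimes\Delta^0 )\circ\rho$.

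The main (though mild) obstacle is purely bookkeeping: one must keep the identification $A\otimes H^0 \otimes M_n (\BC)\cong A\otimes M_n (\BC)\otimes H^0$ consistent throughout, so that the twist $u\otimes I_n$ on the matrix side genuinely acts as the same $u$ in each coordinate. Once this is pinned down, every required identity is the coordinatewise image of the corresponding identity for $(\rho,u)$, whence $\lambda$ is a twisted coaction of $H^0$ on $X$ with respect to $(A, A\otimes M_n (\BC), \rho, u, \rho\otimes\id, u\otimes I_n )$, and the strong Morita equivalence follows by Definition \ref{Def:morita}(iii).
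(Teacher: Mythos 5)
Your proposal is correct and is essentially the paper's own proof: the paper takes $X=(1\otimes f)(A\otimes M_n (\BC))$ for a minimal projection $f\in M_n (\BC)$ with $\lambda((1\otimes f)x)=(1\otimes f\otimes 1^0 )(\rho\otimes\id)(x)$, which under the standard identification of this corner with row vectors $A^n$ is exactly your bimodule and your coordinatewise coaction $\lambda(\xi)=(\rho(\xi_1 ),\dots,\rho(\xi_n ))$. Your verification of Conditions (1)--(5) of Definition \ref{Def:wcovariant} and of Condition ($**$) via $(\rho\otimes\id)\circ\rho=\Ad(u)\circ(\id\otimes\Delta^0 )\circ\rho$ is precisely the ``routine computation'' the paper invokes, carried out explicitly.
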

\begin{proof}
Let $f$ be a minimal projection in $M_n (\BC)$ and let $X=(1\otimes f)(A\otimes M_n (\BC))$.
We regard it as an $A-A\otimes M_n (\BC)$-equivalence bimodule in the usual way.
Let $\lambda$ be a linear map from $X$ to $X\otimes H^0$ defined by
$$
\lambda((1\otimes f)x)=(1\otimes f\otimes 1^0 )(\rho\otimes\id)(x)
$$
for any $x\in A\otimes M_n (\BC)$, where we identify $A\otimes H^0 \otimes M_n (\BC)$ with
$A\otimes M_n (\BC)\otimes H^0$. By routine computations, we can see that $\lambda$ satisfies Conditions
(1)-(5) in Definition \ref{Def:wcovariant} and Condition $(**)$.
\end{proof}

\section{Crossed products of Hilbert $C^*$-bimodules of finite type by finite dimensional $C^*$-Hopf algebras}
\label{crossed}In this section, we extend the notion of crossed products of Hilbert $C^*$-bimodules
of finite type defined in \cite {JW:covariant}, \cite {KW2:discrete} to (twisted) coactions of finite dimensional $C^*$-Hopf algebras.
\par
Let $H$ be a finite dimensional $C^*$-Hopf algebra with its dual $C^*$-Hopf algebra $H^0$.
Let $A$ and $B$ be unital $C^*$-algebras and $X$ a Hilbert $A-B$-bimodule of finite
type. Let $(A, B, X, \rho, u, \sigma, v, \lambda, H^0 )$ be a twisted covariant system.
Under certain conditions, we define $X\rtimes_{\lambda}H$, a Hilbert $A\rtimes_{\rho, u}H-B\rtimes_{\sigma, v}H$-bimodule
of finite type as follows: $X\rtimes_{\lambda}H$ is just $X\otimes H$ (the algebraic tensor product)
as vector spaces. Its left action and right action are given by
\begin{align*}
(a\rtimes_{\rho, u}h)(x\rtimes_{\lambda}l)
& =a[h_{(1)}\cdot_{\lambda}x]\widehat{v}(h_{(2)}, l_{(1)})\rtimes_{\lambda}h_{(3)}l_{(2)}, \\
(x\rtimes_{\lambda}l)(b\rtimes_{\sigma, v}m )
& =x[l_{(1)}\cdot_{\sigma, v}b]\widehat{v}(l_{(2)}, m_{(1)})\rtimes_{\lambda}l_{(3)}m_{(2)}
\end{align*}
for any $a\in A$, $b\in B$, $x\in X$ and $h, l, m\in H$. Then
for any $a_1 , a_2 \in A$, $x\in X$, $h,l,m\in H$,
\begin{align*}
& ((a_1 \rtimes_{\rho, u}h)(a_2 \rtimes_{\rho, u}l))(x\rtimes_{\lambda}m) \\
& =a_1 [h_{(1)}\cdot_{\rho, u}a_2 ]\widehat{u}(h_{(2)}, l_{(1)})[h_{(3)}l_{(2)}\cdot_{\lambda}x]
\widehat{v}(h_{(4)}l_{(3)}, m_{(1)})\rtimes_{\lambda}h_{(5)}l_{(4)}m_{(2)} \\
& =a_1 [h_{(1)}\cdot_{\lambda}a_2 [l_{(1)}\cdot_{\lambda}x]]\widehat{v}(h_{(2)}, l_{(2)})
\widehat{v}(h_{(3)}l_{(3)}, m_{(1)})\rtimes_{\lambda}h_{(4)}l_{(4)}m_{(2)} \\
& =a_1 [h_{(1)}\cdot_{\lambda}a_2 [l_{(1)}\cdot_{\lambda}x]\widehat{v}(l_{(2)}, m_{(1)})]\widehat{v}(h_{(2)}, l_{(3)}m_{(2)})
\rtimes_{\lambda}h_{(3)}l_{(4)}m_{(3)} \\
& =(a_1 \rtimes_{\rho, u}h)((a_2 \rtimes_{\rho, u}l)(x\rtimes_{\lambda}m)).
\end{align*}
Also, for any $b_1 , b_2 \in B$, $x\in X$, $h, l, m \in H$,
\begin{align*}
& (x\rtimes_{\lambda}h)((b_1 \rtimes_{\sigma, v}l)(b_2 \rtimes_{\sigma, v}m)) \\
& =x[h_{(1)}\cdot_{\sigma, v}b_1 ][h_{(2)}\cdot_{\sigma, v}[l_{(1)}\cdot_{\sigma, v}b_2 ]]
\widehat{v}(h_{(3)}, l_{(2)})\widehat{v}(h_{(4)}l_{(3)}, m_{(1)}) \\
& \rtimes_{\lambda}h_{(5)}l_{(4)}m_{(2)} \\
& =x[h_{(1)}\cdot_{\sigma, v}b_1 ]\widehat{v}(h_{(2)}, l_{(1)})[h_{(3)}l_{(2)}\cdot_{\sigma, v}b_2 ]
\widehat{v}(h_{(4)}l_{(3)}, m_{(1)})\rtimes_{\lambda}h_{(5)}l_{(4)}m_{(2)} \\
& =((x\rtimes_{\lambda}h)(b_1 \rtimes_{\sigma, v}l))(b_2 \rtimes_{\sigma, v}m) .
\end{align*}
Thus $X\rtimes_{\lambda}H$ is a left $A\rtimes_{\rho, u}H$ and right $B\rtimes_{\sigma, v}H$-bimodule.
Also, its left $A\rtimes_{\rho, u}H$-valued inner product and right $B\rtimes_{\sigma, v}H$-valued inner product
are given by
\begin{align*}
& {}_{A\rtimes_{\rho, u}H}\la x\rtimes_{\lambda}h \, , \, y\rtimes_{\lambda}l \ra \\
& ={}_A \la x \, , \, [S(h_{(2)}l_{(3)}^* )^* \cdot_{\lambda}y]
\widehat{v}(S(h_{(1)}l_{(2)}^* )^* \, , \, l_{(1)}) \ra 
\rtimes_{\rho, u}h_{(3)}l_{(4)}^* , \\
& \la x\rtimes_{\lambda}h \, , \, y\rtimes_{\lambda}l \ra_{B\rtimes_{\sigma, v}H} \\
& =\widehat{v^*}(h_{(2)}^* , S(h_{(1)})^* )[h_{(3)}^* \cdot_{\sigma, v} \la x \, , \, y \ra_B ]
\widehat{v}(h_{(4)}^* , l_{(1)})
\rtimes_{\sigma, v}h_{(5)}^* l_{(2)}
\end{align*}
for any $x, y\in X$ and $h, l\in H$. We shall show that $X\rtimes_{\lambda}H$ is
a Hilbert $A\rtimes_{\rho, u}H-B\rtimes_{\sigma, v}H$-bimodule of finite type proving
that $X\rtimes_{\lambda}H$ satisfies Conditions
(1)-(10) in \cite [Lemma 1.3]{KW2:discrete}. Clearly $X\rtimes_{\lambda}H$ is
a left $A\rtimes_{\rho}H$- and right $B\rtimes_{\sigma}H$-bimodule. Thus Conditions (1), (4) in
\cite [Lemma 1.3]{KW2:discrete} are satisfied. 
For any $a, b\in A$, $x, y\in X$ and $h, l, m\in H$,
\begin{align*}
& (a\rtimes_{\rho, u}h)\,{}_{A\rtimes_{\rho, u}H} \la x\rtimes_{\lambda}l \, , \, y\rtimes_{\lambda}m \ra \\
& =a[h_{(1)}\cdot_{\rho, u} {}_A \la x \, , \, 
[S(l_{(2)}m_{(3)}^* )^* \cdot_{\lambda}y]\widehat{v}(S(l_{(1)}m_{(2)}^* )^* , m_{(1)}) \ra]
\widehat{u}(h_{(2)}, l_{(3)}m_{(4)}^* ) \\
& \rtimes_{\rho, u}h_{(3)}l_{(4)}m_{(5)}^* \\
& =a \,{}_A \la [h_{(1)}\cdot_{\lambda}x] \, , \, [S(h_{(3)}^* \cdot_{\lambda}[S(l_{(2)}m_{(3)}^* )^* \cdot_{\lambda}y]
[S(h_{(2)}^* )\cdot_{\sigma, v}\widehat{v}(S(l_{(1)}m_{(2)}^* )^* , m_{(1)})] \ra \\
& \times\widehat{u}(h_{(4)}, l_{(3)}m_{(4)}^* )\rtimes_{\rho, u}h_{(5)}l_{(4)}m_{(5)}^* \\
& =a\, {}_A \la [h_{(1)}\cdot_{\lambda}x] \, , \, [S(h_{(5)}^* )\cdot_{\lambda}[S(l_{(4)}m_{(4)}^* )^* \cdot_{\lambda}y]]
\widehat{v}(S(h_{(4)}^* ), S(l_{(3)}m_{(3)}^* )^* ) \\
& \times\widehat{v}(S(h_{(3)}l_{(2)}m_{(2)}^* )^* , m_{(1)})\widehat{v^*} (S(h_{(2)}^* ), S(l_{(1)}^* )) \ra
\widehat{u}(h_{(6)}, l_{(5)}m_{(5)}^* ) \\
& \rtimes_{\rho, u}h_{(7)}l_{(6)}m_{(6)}^* \\
& =a\, {}_A \la [h_{(1)}\cdot_{\lambda}x] \, , \, 
\widehat{u}(S(h_{(5)}^* ), S(l_{(4)}m_{(4)}^* )^* )[S(h_{(4)}l_{(3)}m_{(3)}^* )^*
\cdot_{\lambda}y] \\
& \times\widehat{v}(S(h_{(3)}l_{(2)}m_{(2)}^* )^* , m_{(1)})\widehat{v}(h_{(2)}, l_{(1)})^* \ra 
\widehat{u}(h_{(6)}, l_{(5)}m_{(5)}^* )\rtimes_{\rho, u}h_{(7)}l_{(6)}m_{(6)}^* \\
& =a\, {}_A \la [h_{(1)}\cdot_{\lambda}x]\widehat{v}(h_{(2)}, l_{(1)})\, , \, [S(h_{(4)}l_{(3)}m_{(3)}^* )^* \cdot_{\lambda}y]
\widehat{v}(S(h_{(3)}l_{(2)}m_{(2)}^* )^* , m_{(1)}) \ra \\
& \times\widehat{u^* }(h_{(5)}, l_{(4)}m_{(4)}^* )\widehat{u}(h_{(6)}, l_{(5)}m_{(5)}^* )
\rtimes_{\rho, u}h_{(7)}l_{(6)}m_{(6)}^* \\
& =a\,{}_A \la [h_{(1)}\cdot_{\lambda}x]\widehat{v}(h_{(2)}, l_{(1)}) \, , \, [S(h_{(4)}l_{(3)}m_{(3)}^* )^* \cdot_{\lambda}y]
\widehat{v}(S(h_{(3)}l_{(2)}m_{(2)}^* )^* , m_{(1)} ) \ra \\
& \rtimes_{\rho, u}h_{(5)}l_{(4)}m_{(4)}^* \\
& ={}_{A\rtimes_{\rho, u}H} \la a[h_{(1)}\cdot_{\lambda}x]\widehat{v}(h_{(2)}, l_{(1)})\rtimes_{\lambda}h_{(3)}l_{(2)} \, , \,
y\rtimes_{\lambda}m \ra 
\end{align*}
$$
={}_{A\rtimes_{\rho, u}H} \la (a\rtimes_{\rho, u}h)(x\rtimes_{\lambda}l) \, , \, y\rtimes_{\lambda}m \ra .
$$

Also,
\begin{align*}
& \la x\rtimes_{\lambda}h \, , \, y\rtimes_{\lambda}l \ra_{B\rtimes_{\sigma, v}H}(b\rtimes_{\sigma, v}m) \\
& =\widehat{v^*}(h_{(2)}^* , S(h_{(1)}^* ))[h_{(3)}^* \cdot_{\sigma, v} \la x,y \ra_B ]\widehat{v}(h_{(4)}^* , l_{(1)})
[h_{(5)}^* l_{(2)}\cdot_{\sigma, v}b]\widehat{v}(h_{(6)}^* l_{(3)}, m_{(1)}) \\
& \rtimes_{\sigma, v}h_{(7)}^* l_{(4)}m_{(2)} \\
& =\widehat{v^*}(h_{(2)}^* , S(h_{(1)}^* ))[h_{(3)}^* \cdot_{\sigma, v} \la x, y \ra_B ]
[h_{(4)}^* \cdot_{\sigma, v}[l_{(1)}\cdot_{\sigma, v}b]]\widehat{v}(h_{(5)}^* , l_{(2)}) 
\widehat{v}(h_{(6)}^* l_{(3)}, m_{(1)}) \\
& \rtimes_{\sigma, v}h_{(7)}^* l_{(4)}m_{(2)} \\
& =\widehat{v^*}(h_{(2)}^* S(h_{(1)}^* ))[h_{(3)}^* \cdot_{\sigma, v} \la x, y \ra_B [l_{(1)}\cdot_{\sigma, v}b ]] \widehat{v}
(h_{(4)}^* , l_{(2)})\widehat{v}(h_{(5)}^* l_{(3)}, m_{(1)}) \\
& \rtimes_{\sigma, v}h_{(6)}^* l_{(4)}m_{(2)} \\
& =\widehat{v^*}(h_{(2)}^* , S(h_{(1)}^* ))[h_{(3)}^* \cdot_{\sigma, v} \la x, y \ra_B [\l_{(1)}\cdot_{\sigma, v}b]]
[h_{(4)}^* \cdot_{\sigma, v}\widehat{v}(l_{(2)}, m_{(1)})]\widehat{v}(h_{(5)}^* , l_{(3)}m_{(2)}) \\
& \rtimes_{\sigma, v}h_{(6)}^* l_{(4)}m_{(3)} \\
& =\widehat{v^*}(h_{(2)}^* , S(h_{(1)}^* ))[h_{(3)}^* \cdot_{\sigma, v} \la x, y \ra_B [l_{(1)}\cdot_{\sigma, v}b]\widehat{v}
(l_{(2)}, m_{(1)})]\widehat{v}(h_{(4)}^* , l_{(3)}m_{(2)}) \\
& \rtimes_{\sigma, v}h_{(5)}^* l_{(4)}m_{(3)} \\
& =\la x\rtimes_{\lambda}h \, , \, (y\rtimes_{\lambda}l)(b\rtimes_{\sigma, v}m) \ra_{B\rtimes_{\sigma, v}H} .
\end{align*}
Thus Conditions (3), (6) in \cite [Lemma 1.3]{KW2:discrete} are satisfied.
For any $x, y\in X$ and $h, l \in H$,
\begin{align*}
& {}_{A\rtimes_{\rho, u}H} \la x\rtimes_{\lambda}h \, , \, y\rtimes_{\lambda}l \ra^* \\
& =\widehat{u^* }(l_{(5)}h_{(4)}^* , S(l_{(4)}h_{(3)}^* ))
[l_{(6)}h_{(5)}^* \cdot_{\rho, u}{}_A \la [S(l_{(3)}h_{(2)}^* ) \cdot_{\lambda}y]
\widehat{v}(S(l_{(2)}h_{(1)}^* ) , l_{(1)}) \, , \, x \ra ] \\
& \rtimes_{\rho, u}\l_{(7)}h_{(6)}^* \\
& =\widehat{u^*}(l_{(5)}h_{(4)}^* , S(l_{(4)}h_{(3)}^* )) \\
& \times{}_A \la [l_{(6)}h_{(5)}^* \cdot_{\lambda}[S(l_{(3)}h_{(2)}^* )\cdot_{\lambda}y]]
[l_{(7)}h_{(6)}^* \cdot_{\sigma, v}\widehat{v}(S(l_{(2)}h_{(1)}^* ) , l_{(1)})] \, , \, 
[S(l_{(8)}h_{(7)}^* )^* \cdot_{\lambda}x] \ra \\
& \rtimes_{\rho, u}l_{(9)}h_{(8)}^* \\
& =\widehat{u^*}(l_{(6)}h_{(6)}^* , S(l_{(5)}h_{(5)}^* )) \\
& \times {}_A \la [l_{(7)}h_{(7)}^* \cdot_{\lambda}[S(l_{(4)}h_{(4)}^* )\cdot_{\lambda}y]]
\widehat{v}(l_{(8)}h_{(8)}^* , S(l_{(3)}h_{(3)}^* ))\widehat{v}(l_{(9)}h_{(9)}^* S(l_{(2)}h_{(2)}^* ), l_{(1)}) \\
& \times\widehat{v^*}(l_{(10)}h_{(10)}^* , S(h_{(1)}^* )) \, , \, [S(l_{(11)}h_{(11)}^* )^* \cdot_{\lambda}x] \ra
\rtimes_{\rho, u} l_{(12)}h_{(12)}^* \\
& =\widehat{u^*}(l_{(6)}h_{(6)}^* , S(l_{(5)}h_{(5)}^* )) \\
& \times {}_A \la \widehat{u}(l_{(7)}h_{(7)}^* , S(l_{(4)}h_{(4)}^* ))[l_{(8)}h_{(8)}^*S(l_{(3)}h_{(3)}^* )\cdot_{\lambda}y]
\widehat{v}(l_{(9)}h_{(9)}^*S(l_{(2)}h_{(2)}^* ) , l_{(1)}) \\
& \times \widehat{v^*}(l_{(10)}h_{(10)}^* , S(h_{(1)}^* )) \, , \, [S(l_{(11)}h_{(11)}^* )^* \cdot_{\lambda}x] \ra
\rtimes_{\rho, u}l_{(8)}h_{(8)}^* \\
& ={}_A \la y \, , \,[S(l_{(2)}h_{(3)}^* )^* \cdot_{\lambda}x]\widehat{v}(S(l_{(1)}h_{(2)}^* )^* , h_{(1)}) \ra
\rtimes_{\rho, u}l_{(3)}h_{(4)}^* \\
& ={}_{A\rtimes_{\rho, u}H} \la y\rtimes_{\lambda}l \, , \, x\rtimes_{\lambda}h \ra .
\end{align*}
Similarly
\begin{align*}
& \la x\rtimes_{\lambda}h \, , \, y\rtimes_{\lambda}l \ra_{B\rtimes_{\sigma, v}H}^* \\
& =\widehat{v^*}(l_{(3)}^* h_{(6)}, S(l_{(2)}^* h_{(5)}))[l_{(4)}^* h_{(7)}\cdot_{\sigma}\widehat{v}(h_{(4)}^* , l_{(1)})^*
[S(h_{(3)})\cdot_{\sigma, v} \la y, x \ra_B ]\widehat{v}(S(h_{(2)}), h_{(1)})] \\
& \rtimes_{\sigma, v}l_{(5)}^* h_{(8)} \\
& =\widehat{v^*}(l_{(3)}^* h_{(6)}, S(l_{(2)}h_{(5)}))[l_{(4)^* }h_{(7)}\cdot_{\sigma, v}\widehat{v^*}(S(h_{(4)}) , S(l_{(1)}^* ))] \\
& \times [l_{(5)}^* h_{(8)}\cdot_{\sigma, v}[S(h_{(3)})\cdot_{\sigma, v} \la y, x \ra_B ]]
[l_{(6)}^* h_{(9)}\cdot_{\sigma, v}\widehat{v}(S(h_{(2)}), h_{(1)})]\rtimes_{\sigma, v}l_{(7)}^* h_{(10)} \\
& =\widehat{v}(S(l_{(3)}^* h_{(6)})^* , (l_{(2)}^* h_{(5)})^* )^* [S(h_{(7)}^* l_{(4)})\cdot_{\sigma}
\widehat{v}(h_{(4)}^* , l_{(1)})]^* \\
& \times[l_{(5)}^* h_{(8)}\cdot_{\sigma, v}[S(h_{(3)})\cdot_{\sigma, v} \la y, x \ra_B ]]
[l_{(6)}^* h_{(9)}\cdot_{\sigma, v}\widehat{v}(S(h_{(2)}), h_{(1)})]\rtimes_{\sigma, v}l_{(7)}^* h_{(10)} \\
& =[[S(h_{(7)}^* l_{(4)})\cdot_{\sigma, v}\widehat{v}(h_{(4)}^* , l_{(1)})]
\widehat{v}(S(h_{(6)}^* l_{(3)}), h_{(5)}^* l_{(2)})]^*
[l_{(5)}^* h_{(8)}\cdot_{\sigma, v}[S(h_{(3)})\cdot_{\sigma, v} \la y, x \ra_B ]] \\
& \times [l_{(6)}^* h_{(9)}\cdot_{\sigma, v}\widehat{v}(S(h_{(2)}), h_{(1)})]\rtimes_{\sigma, v}l_{(7)}^* h_{(10)} \\
& =[\widehat{v}(S(h_{(7)}^* l_{(3)}), h_{(4)}^* )\widehat{v}(S(h_{(6)}^* l_{(2)})h_{(5)}^* , l_{(1)})]^*
[l_{(4)}^* h_{(8)}\cdot_{\sigma, v}[S(h_{(3)})\cdot_{\sigma, v} \la y, x \ra_B ]] \\
& \times [l_{(5)}^* h_{(9)}\cdot_{\sigma, v}\widehat{v}(S(h_{(2)}), h_{(1)})]\rtimes_{\sigma, v}l_{(6)}^* h_{(10)} \\
& =\widehat{v}(S(l_{(2)}), l_{(1)})^* \widehat{v^*}(l_{(3)}^* h_{(5)}, S(h_{(4)}))[l_{(4)}^* h_{(6)}\cdot_{\sigma, v}
[S(h_{(3)})\cdot_{\sigma, v} \la y, x \ra_B ]] \\
& \times [l_{(5)}^* h_{(7)}\cdot_{\sigma, v}\widehat{v}(S(h_{(2)}), h_{(1)})]\rtimes_{\sigma, v}l_{(6)}^*h_{(8)} \\
& =\widehat{v}(S(l_{(2)}), l_{(1)})^* [l_{(3)}^* h_{(5)}S(h_{(4)})\cdot_{\sigma, v}\la y, x \ra_B ]
\widehat{v^*}(l_{(4)}^* h_{(6)}, S(h_{(3)})) \\
& \times [l_{(5)}^* h_{(7)}\cdot_{\sigma, v}\widehat{v}(S(h_{(2)}), h_{(1)})[\rtimes_{\sigma, v}l_{(6)}^* h_{(8)} \\
& =\widehat{v}(S(l_{(2)}), l_{(1)})^* [l_{(3)}^* \cdot_{\sigma, v} \la y, x \ra_B ]\widehat{v^*}(l_{(4)}^* h_{(4)}, S(h_{(3)}))
[l_{(5)}^* h_{(5)}\cdot_{\sigma, v}\widehat{v}(S(h_{(2)}), h_{(1)})] \\
& \rtimes_{\sigma, v}l_{(6)}^* h_{(6)} \\
& =\widehat{v}(S(l_{(2)}), l_{(1)})^* [l_{(3)}^* \cdot_{\sigma, v} \la y, x \ra_B ]
\widehat{v}(l_{(4)}^* h_{(5)}S(h_{(4)}), h_{(1)})
\widehat{v^*}(l_{(5)}^* h_{(6)}, S(h_{(3)})h_{(2)}) \\
& \rtimes_{\sigma, v}l_{(6)}^* h_{(7)} \\
& =\la y\rtimes_{\lambda}l \, , \, x\rtimes_{\lambda}h \ra_{B\rtimes_{\sigma, v}H} .
\end{align*}
Thus Conditions (2), (5) in \cite [Lemma 1.3]{KW2:discrete} are satisfied.
Moreover, for any $b\in B$, $x, y\in X$, $l, m\in H$,
\begin{align*}
& {}_{A\rtimes_{\rho, u}H} \la x\rtimes_{\lambda}l \, , \, (y\rtimes_{\lambda}m)(b\rtimes_{\sigma, v}1)^* \ra \\
& ={}_A \la x \, , \, [S(l_{(3)}m_{(5)}^* )^* \cdot_{\lambda}y]\widehat{v}(S(l_{(2)}m_{(4)}^* )^* , m_{(1)})
[S(l_{(1)}m_{(3)}^* )^* m_{(2)}\cdot_{\sigma, v}b^* ] \ra\\
& \rtimes_{\rho, u}l_{(4)}m_{(6)}^* \\
& ={}_A \la x[l_{(1)}\cdot_{\sigma, v}b] \, , \, [S(l_{(3)}m_{(3)}^* )^* \cdot_{\lambda}y]
\widehat{v}(S(l_{(2)}m_{(2)}^* )^* , m_{(1)}) \ra\rtimes_{\rho, u}l_{(4)}m_{(4)}^* \\
& ={}_{A\rtimes_{\rho, u}H} \la (x\rtimes_{\lambda}l)(b\rtimes_{\sigma, v}1) \, , \, y\rtimes_{\lambda}m \ra .
\end{align*}
Also, for any $x, y\in X$, $h, l, m\in H$,
\begin{align*}
& {}_{A\rtimes_{\rho, u}H} \la x\rtimes_{\lambda} l \, , \, (y\rtimes_{\lambda}m)(1\rtimes_{\sigma, v}h)^* \ra \\
& ={}_{A\rtimes_{\rho, u}H} \la x\rtimes_{\lambda}l \, , \, y[m_{(1)}\cdot_{\sigma, v}\widehat{v}(S(h_{(2)}), h_{(1)})^* ]
\widehat{v}(m_{(2)}, h_{(3)}^* )\rtimes_{\lambda}m_{(3)}h_{(4)}^* \ra \\
& ={}_A \la x \, , \, [S(l_{(2)}h_{(6)}m_{(5)}^* )^* \cdot_{\lambda}y[m_{(1)}\cdot_{\sigma, v}
\widehat{v^*}(h_{(2)}^* , S(h_{(1)})^* ) ]\widehat{v}(m_{(2)}, h_{(3)}^* )] \\
& \times \widehat{v}(S(l_{(1)}h_{(5)}m_{(4)}^* )^* , m_{(3)}h_{(4)}^* )\rtimes_{\rho, u}l_{(3)}h_{(7)}m_{(6)}^* \\
& ={}_A \la x \, , \, [S(l_{(2)}h_{(7)}m_{(5)}^* )^* \cdot_{\lambda}y\widehat{v}(m_{(1)}, h_{(3)}^* S(h_{(2)}^* ))
\widehat{v^*}(m_{(2)}h_{(4)}^* , S(h_{(1)}^* ))] \\
& \times \widehat{v}(S(l_{(1)}h_{(6)}m_{(4)}^* )^* , m_{(3)}h_{(5)}^* ) \ra \rtimes_{\rho, u}l_{(3)}h_{(8)}m_{(6)}^* \\
& ={}_A \la x \, , \, [S(l_{(3)}h_{(6)}m_{(5)}^* )^* \cdot_{\lambda}y][S(l_{(2)}h_{(5)}m_{(4)}^* )^* \cdot_{\sigma, v}
\widehat{v^* }(m_{(1)}h_{(2)}^* , S(h_{(1)}^* ))] \\
& \times \widehat{v}(S(l_{(1)}h_{(4)}m_{(3)}^* )^* , m_{(2)}h_{(3)}^* ) \ra \rtimes_{\rho, u}l_{(4)}h_{(7)}m_{(6)}^* \\
& ={}_A \la x \, , \, [S(l_{(3)}h_{(7)}m_{(5)}^* )^* \cdot_{\lambda}y]
\widehat{v}(S(l_{(2)}h_{(6)}m_{(4)}^* )^* , m_{(1)}h_{(3)}^* S(h_{(2)}^* )) \\
& \times \widehat{v^*}(S(l_{(1)}h_{(5)}m_{(3)}^* )^* m_{(2)}h_{(4)}^* , S(h_{(1)}^* ))\ra
\rtimes_{\rho, u}l_{(4)}h_{(8)}m_{(6)}^* \\
& ={}_A \la x \widehat{v}(l_{(1)}, h_{(1)}) \, , \, [S(l_{(3)}h_{(3)}m_{(3)}^* )^* \cdot_{\lambda}y]
\widehat{v}(S(l_{(2)}h_{(2)}m_{(2)}^* )^* , m_{(1)}) \ra \\
& \rtimes_{\rho, u}l_{(4)}h_{(4)}m_{(4)}^* \\
& ={}_{A\rtimes_{\rho, u}H} \la (x\rtimes_{\lambda}l)(1\rtimes_{\sigma, v}h) \, , \, y\rtimes_{\lambda}m \ra .
\end{align*}
Thus we obtain that for any $b\in B$, $x, y\in X$, $h, l, m\in H$,
$$
{}_{A\rtimes_{\rho, u}H} \la (x\rtimes_{\lambda} l)(b\rtimes_{\sigma, v}h) \, , \, y\rtimes_{\lambda}m \ra
={}_{A\rtimes_{\rho, u}H} \la x\rtimes_{\lambda} l \, , \, (y\rtimes_{\lambda}m)(b\rtimes_{\sigma, v}h)^* \ra .
$$
We note that for any $a\in A$, $x, y\in X$, $h, l, m \in H$,
\begin{align*}
& \la (a\rtimes_{\rho, u}h)(x\rtimes_{\lambda}l) \, , \, y\rtimes_{\lambda}m \ra_{B\rtimes_{\sigma, v}H} \\
& =(1\rtimes_{\sigma, v}l)^* \la (a\rtimes_{\rho, u}h)(x\rtimes_{\lambda}1) \, , \,
y\rtimes_{\lambda}1 \ra_{B\rtimes_{\sigma, v}H}(1\rtimes_{\sigma, v}m).
\end{align*}
Hence in order to show that for any $a\in A$, $x, y\in X$, $h, l, m\in H$,
$$
\la (a\rtimes_{\rho, u}h)(x\rtimes_{\lambda}l) \, , \, y\rtimes_{\lambda}m \ra_{B\rtimes_{\sigma, v}H}
= \la x\rtimes_{\lambda}l \, , \, (y\rtimes_{\lambda}m) (a\rtimes_{\rho, u}h)^* \ra_{B\rtimes_{\sigma, v}H},
$$
we have only to show that for any $a\in A$, $x, y\in X$, $h\in H$,
$$
\la (a\rtimes_{\rho, u}h)(x\rtimes_{\lambda}1) \, , \, y\rtimes_{\lambda} 1 \ra_{B\rtimes_{\sigma, v}H}
= \la x\rtimes_{\lambda}1 \, , \, (a\rtimes_{\rho, u}h)^* (y\rtimes_{\lambda}1) \ra_{B\rtimes_{\sigma, v}H} .
$$
For any $a\in A$, $x, y\in X$,
\begin{align*}
& \la (a\rtimes_{\rho, u}1)(x\rtimes_{\lambda}1) \, , \, y\rtimes_{\lambda}1 \ra_{B\rtimes_{\sigma, v}H}
=\la ax\rtimes_{\lambda}1 \, , \, y\rtimes_{\lambda}1 \ra_{B\rtimes_{\sigma, v}H} \\
& =\la ax , y \ra_B
=\la x\rtimes_{\lambda}1 \, , \, (a\rtimes_{\rho, u}1)^* (y\rtimes_{\lambda}1) \ra_{B\rtimes_{\sigma, v}H} .
\end{align*}
Also, for any $x, y\in X$, $h\in H$,
\begin{align*}
& \la (1\rtimes_{\rho, u}h)(x\rtimes_{\lambda} 1) \, , \, y\rtimes_{\lambda}1 \ra_{B\rtimes_{\sigma, v}H} \\
& =\la [S(h_{(4)})\cdot_{\lambda}[h_{(1)}\cdot_{\lambda}x]]\widehat{v}(S(h_{(3)}), h_{(2)}) \, , \,
[h_{(5)}^* \cdot_{\lambda}y] \ra_B \rtimes_{\sigma, v}h_{(6)}^* \\
& =\la \widehat{u}(S(h_{(2)}), h_{(1)})x \, , \, [h_{(3)}^* \cdot_{\lambda}y] \ra_B\rtimes_{\sigma, v}h_{(4)}^* \\
& =\la x\rtimes_{\lambda}1 \, , \, \widehat{u}(S(h_{(2)}), h_{(1)})^* [h_{(3)}^* \cdot_{\lambda}y]\rtimes_{\lambda}h_{(4)}^*
\ra_{B\rtimes_{\sigma, v}H} \\
& =\la x\rtimes_{\lambda}1 , \, (1\rtimes_{\rho, u}h)^* (y\rtimes_{\lambda}1 ) \ra_{B\rtimes_{\sigma, v}H} .
\end{align*}
Thus Condition (8) in \cite [Lemma 1.3]{KW2:discrete} is satisfied. Moreover, for any
$a\in A$, $b\in B$, $x\in X$, $h, l, m \in H$,
\begin{align*}
& (a\rtimes_{\rho, u}h)[(x\rtimes_{\lambda}l)(b\rtimes_{\sigma, v}m)] \\
& =a[h_{(1)}\cdot_{\lambda}x][h_{(2)}\cdot_{\sigma, v}[l_{(1)}\cdot_{\sigma, v}b]]\widehat{v}(h_{(3)}, l_{(2)})
\widehat{v}(h_{(4)}l_{(3)}, m_{(1)}) \rtimes_{\lambda}h_{(5)}l_{(4)}m_{(2)} \\
& =a[h_{(1)}\cdot_{\lambda}x]\widehat{v}(h_{(2)}, l_{(1)})[h_{(3)}l_{(2)}\cdot_{\sigma, v}b]
\widehat{v}(h_{(4)}l_{(3)}, m_{(1)})
\rtimes_{\lambda}h_{(5)}l_{(4)}m_{(2)} \\
& =[(a\rtimes_{\rho, u}h)(x\rtimes_{\lambda}l)](b\rtimes_{\sigma, v}m) .
\end{align*}
Thus Condition (7) in \cite [Lemma 1.3]{KW2:discrete} is satisfied. Since $X$ is of finite type,
there are finite subsets $\{w_i \}_{i=1}^n $ and $\{z_j \}_{j=1}^m$ in $X$ such that
$$
x=\sum_{i=1}^n w_i \la w_i , x \ra_B =\sum_{j=1}^m {}_A \la x, z_j \ra z_j
$$
for any $x\in X$. Then we have the following lemma:

\begin{lem}\label{lem:finite}With the above notations, if
$(A, B, X, \rho, \sigma, \lambda, H^0 )$ is a covariant system,
then for any $x\in X$, 
$h\in H$,
\begin{align*}
x\rtimes_{\lambda}h
& =\sum_{i=1}^n (w_i \rtimes_{\lambda}1 )\la w_i \rtimes_{\lambda}1 \, , \, 
x\rtimes_{\lambda}h \ra_{B\rtimes_{\sigma}H} \\
& = \sum_{j=1}^m {}_{A\rtimes_{\rho}H} \la x\rtimes_{\lambda}h \, , \, 
z_j \rtimes_{\lambda}1 \ra (z_j \rtimes_{\lambda} 1).
\end{align*}
\end{lem}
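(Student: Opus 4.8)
The plan is to prove both reconstruction formulas by substituting the explicit formulas for the inner products and module actions of $X\rtimes_\lambda H$ and then collapsing the resulting Sweedler sums. Since the statement concerns a (non-twisted) covariant system, I would first specialize the crossed-product formulas of this section by setting $u=v=1^0$, so that $\widehat u$ and $\widehat v$ reduce to $\epsilon\otimes\epsilon$; the left and right actions then become $(a\rtimes_\rho g)(x'\rtimes_\lambda l)=a[g_{(1)}\cdot_\lambda x']\rtimes_\lambda g_{(2)}l$ and $(x'\rtimes_\lambda l)(b\rtimes_\sigma m)=x'[l_{(1)}\cdot_\sigma b]\rtimes_\lambda l_{(2)}m$. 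The essential inputs are the reconstruction identities $x=\sum_i w_i\la w_i,x\ra_B=\sum_j {}_A\la x,z_j\ra z_j$ for $X$, the covariance conditions $(1)'$--$(5)'$ of Definition \ref{Def:wcovariant}, and the coaction condition $(*)'$ of Definition \ref{Def:coaction}.

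The first equality is almost immediate. Because the first entry $w_i\rtimes_\lambda 1$ carries the trivial group-like $1_H$, every Sweedler factor coming from it is absorbed by a counit, and the right-inner-product formula collapses to $\la w_i\rtimes_\lambda 1,x\rtimes_\lambda h\ra_{B\rtimes_\sigma H}=\la w_i,x\ra_B\rtimes_\sigma h$. Applying the right action, again with the $1_H$ killing the coproduct, gives $(w_i\rtimes_\lambda 1)(\la w_i,x\ra_B\rtimes_\sigma h)=w_i\la w_i,x\ra_B\rtimes_\lambda h$, and summing over $i$ the right reconstruction formula for $X$ yields $\sum_i w_i\la w_i,x\ra_B\rtimes_\lambda h=x\rtimes_\lambda h$.

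The second equality is parallel but genuinely harder. With the trivial group-like now in the second slot, the left-inner-product formula collapses to ${}_{A\rtimes_\rho H}\la x\rtimes_\lambda h,z_j\rtimes_\lambda 1\ra={}_A\la x,S(h_{(1)})^*\cdot_\lambda z_j\ra\rtimes_\rho h_{(2)}$, and applying the left action to $z_j\rtimes_\lambda 1$ and summing over $j$ turns the right-hand side into $\sum_j {}_A\la x,S(h_{(1)})^*\cdot_\lambda z_j\ra[h_{(2)}\cdot_\lambda z_j]\rtimes_\lambda h_{(3)}$. Everything therefore reduces to the key identity $\sum_j {}_A\la x,S(h_{(1)})^*\cdot_\lambda z_j\ra[h_{(2)}\cdot_\lambda z_j]=\epsilon(h)x$; granting it, applying the identity to the first two Sweedler legs replaces the $X$-component by $\epsilon(h_{(1)})x$ and the counit then restores $x\rtimes_\lambda h$. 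To establish this identity I would imitate the finite-group computation behind Proposition \ref{prop:extension}: use the left covariance $(3)'$ to transfer the factor $S(h_{(1)})^*$ off $z_j$ and onto $x$ (at the cost of a $\cdot_\rho$-twist), use $(1)'$ to pull the remaining action out of the inner product, apply the reconstruction $\sum_j {}_A\la x',z_j\ra z_j=x'$, and finally collapse the surviving factors using $(*)'$ together with the antipode--counit relation $\sum S(h_{(1)})h_{(2)}=\epsilon(h)1$ and the $C^*$-identities $S^2=\id$ and $S(h)^*=S(h^*)$.

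The main obstacle is precisely this last identity. The basis element $z_j$ appears twice, once inside the inner product acted on by $S(h_{(1)})^*$ and once outside acted on by $h_{(2)}$, and because the antipode is anti-comultiplicative these two occurrences land on adjacent but distinct Sweedler legs that do not line up directly under $(3)'$. Untangling the legs so that the antipode--counit relation can telescope the sum down to $\epsilon(h)x$ is the delicate bookkeeping step, and it is exactly where $(*)'$ and the $*$-compatibility of $S$ are indispensable. Once this is done, the two displayed formulas exhibit $\{w_i\rtimes_\lambda 1\}$ as a right $B\rtimes_\sigma H$-basis and $\{z_j\rtimes_\lambda 1\}$ as a left $A\rtimes_\rho H$-basis, completing the verification that $X\rtimes_\lambda H$ is of finite type in the sense of \cite{KW1:bimodule,KW2:discrete}.
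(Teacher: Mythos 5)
Your proposal is correct and matches the paper's proof essentially step for step: the first equality is the same one-line computation, and for the second the paper establishes exactly your key identity $\sum_{j} {}_A \la x \, , \, [S(h_{(1)})^* \cdot_{\lambda} z_j ] \ra [h_{(2)}\cdot_{\lambda} z_j ] = \epsilon(h)x$ by the moves you list --- inserting $x=[h_{(2)}S(h_{(1)})\cdot_{\lambda}x]$ (using $S^2 =\id$ and $(*)'$), reading $(3)'$ backwards to move the action onto $x$, pulling it out of the inner product with $(1)'$, and collapsing via the reconstruction formula and the antipode--counit relation. The Sweedler-leg mismatch you flag as the delicate point is resolved precisely by that counit insertion, which is the paper's opening step in its second chain of equalities, so your recipe is not merely a plausible strategy but the actual proof.
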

\begin{proof}For any $x\in X$, $h\in H$,
$$
\sum_{i=1}^n (w_i \rtimes_{\lambda}1)\la w_i \rtimes_{\lambda}1 \, , \, x\rtimes_{\lambda}h \ra_{B\rtimes_{\sigma}H}
=\sum_{i=1}^n w_i \la w_i \, , \, x \ra_B \rtimes_{\lambda}h
=x\rtimes_{\lambda}h .
$$
Also,
\begin{align*}
& \sum_{j=1}^m {}_{A\rtimes_{\rho}H} \la x\rtimes_{\lambda}h \, , \, z_j \rtimes_{\lambda}1 \ra (z_j \rtimes_{\lambda}1 ) \\
& =\sum_{j=1}^m{}_A \la [h_{(2)}S(h_{(1)})\cdot_{\lambda}x] \, , \, [S(h_{(3)})^* \cdot_{\lambda}z_j ] \ra
[h_{(4)}\cdot_{\lambda}z_j ]\rtimes_{\lambda}h_{(5)} \\
& =\sum_{j=1}^m [h_{(2)}\cdot_{\lambda}{}_A \la [S(h_{(1)})\cdot_{\lambda}x] \, , \, z_j \ra z_j ]\rtimes_{\lambda}h_{(3)} \\
& =[h_{(2)}\cdot_{\lambda}[S(h_{(1)})\cdot_{\lambda}x]]\rtimes_{\lambda}h_{(3)} =x\rtimes_{\lambda}h .
\end{align*}
Therefore, we obtain the conclusion.
\end{proof}

For any Hilbert $C^*$-bimodule $Y$, $\lInd[Y]$ and $\rInd[Y]$ denote its left index and right index,
respectively.

\begin{cor}\label{cor:index}With the above notations and assumptions,
$$
\lInd[X\rtimes_{\lambda}H]=\lInd[X]\rtimes_{\sigma}1, \quad
\rInd[X\rtimes_{\lambda}H]=\rInd[X]\rtimes_{\rho}1 .
$$
\end{cor}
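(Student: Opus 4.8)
The plan is to read both indices off directly from the bases exhibited in Lemma~\ref{lem:finite}. Recall that for a Hilbert $A$-$B$-bimodule of finite type one has $\rInd[X]=\sum_{i=1}^n {}_A\la w_i,w_i\ra\in A$, computed from the right $B$-basis $\{w_i\}$ by means of the left inner product, and $\lInd[X]=\sum_{j=1}^m\la z_j,z_j\ra_B\in B$, computed from the left $A$-basis $\{z_j\}$ by means of the right inner product. By Lemma~\ref{lem:finite} the family $\{w_i\rtimes_{\lambda}1\}_{i=1}^n$ is a right $B\rtimes_{\sigma}H$-basis and $\{z_j\rtimes_{\lambda}1\}_{j=1}^m$ is a left $A\rtimes_{\rho}H$-basis of $X\rtimes_{\lambda}H$. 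Consequently,
$$
\rInd[X\rtimes_{\lambda}H]=\sum_{i=1}^n {}_{A\rtimes_{\rho}H}\la w_i\rtimes_{\lambda}1,\,w_i\rtimes_{\lambda}1\ra,\qquad
\lInd[X\rtimes_{\lambda}H]=\sum_{j=1}^m \la z_j\rtimes_{\lambda}1,\,z_j\rtimes_{\lambda}1\ra_{B\rtimes_{\sigma}H},
$$
and it remains only to evaluate these two sums.

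First I would compute the right index. The left $A\rtimes_{\rho}H$-valued inner product on $X\rtimes_{\lambda}H$ is the specialization to the untwisted case of the formula displayed above, so that the factors $\widehat{u}$ and $\widehat{v}$ are replaced by the counit. Putting $h=l=1$ there and using $\Delta(1)=1\otimes 1$, every Sweedler leg of $h$ and $l$ equals $1$; hence $S(h_{(2)}l_{(3)}^*)^*=1$, each normalization factor equals $1_B$, and $1\cdot_{\lambda}w_i=w_i$ by Condition (5)$'$ of Definition~\ref{Def:wcovariant}. Each summand therefore collapses to ${}_A\la w_i,w_i\ra\rtimes_{\rho}1$, whence $\rInd[X\rtimes_{\lambda}H]=\bigl(\sum_{i=1}^n{}_A\la w_i,w_i\ra\bigr)\rtimes_{\rho}1=\rInd[X]\rtimes_{\rho}1$.

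The left index is handled symmetrically. Putting $h=l=1$ into the corresponding formula for the right $B\rtimes_{\sigma}H$-valued inner product, all Sweedler legs again reduce to $1$, the normalization factors $\widehat{v^*}(1,1)$ and $\widehat{v}(1,1)$ become $1_B$, and $1\cdot_{\sigma}\la z_j,z_j\ra_B=\la z_j,z_j\ra_B$. Thus each summand equals $\la z_j,z_j\ra_B\rtimes_{\sigma}1$, and summing over $j$ gives $\lInd[X\rtimes_{\lambda}H]=\bigl(\sum_{j=1}^m\la z_j,z_j\ra_B\bigr)\rtimes_{\sigma}1=\lInd[X]\rtimes_{\sigma}1$.

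The only point to watch is the collapse of the Sweedler notation at $h=l=1$, but since $\Delta(1)=1\otimes 1$ this is immediate, so there is no genuine obstacle here: all the substantive work is already carried by Lemma~\ref{lem:finite} (which supplies the bases of $X\rtimes_{\lambda}H$) and by the inner-product formulas established for $X\rtimes_{\lambda}H$ before it. Hence the corollary follows at once.
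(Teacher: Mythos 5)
Your proof is correct and follows essentially the same route as the paper: the paper likewise reads both indices off from the bases $\{w_i\rtimes_{\lambda}1\}$ and $\{z_j\rtimes_{\lambda}1\}$ supplied by Lemma~\ref{lem:finite} and evaluates the inner-product formulas at $h=l=1$ to get $\lInd[X\rtimes_{\lambda}H]=\sum_{j=1}^m\la z_j , z_j\ra_B\rtimes_{\sigma}1$ and $\rInd[X\rtimes_{\lambda}H]=\sum_{i=1}^n {}_A\la w_i , w_i\ra\rtimes_{\rho}1$. You have merely spelled out the Sweedler collapse $\Delta(1)=1\otimes 1$ that the paper leaves implicit.
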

\begin{proof}
By the definitions of the left index and the right index of a Hilbert $C^*$-bimodule,
\begin{align*}
\lInd[X\rtimes_{\lambda}H] & =\sum_{j=1}^m \la z_j \, , \, z_j \ra_B \rtimes_{\sigma}1 =\lInd[X]\rtimes_{\sigma}1 ,\\
\rInd[X\rtimes_{\lambda}H] & =\sum_{i=1}^n {}_A \la w_i \, , \, w_i \ra\rtimes_{\rho}1 =\rInd[X]\rtimes_{\rho}1 .
\end{align*}
\end{proof}

\begin{prop}\label{prop:type}With the above notations and assupmtions,
$X\rtimes_{\lambda}H$ is a Hilbert
$A\rtimes_{\rho}H-B\rtimes_{\sigma}H$-bimodule of finite type with
$$
\lInd[X\rtimes_{\lambda}H]=\lInd[X]\rtimes_{\sigma}1, \quad \rInd[X\rtimes_{\lambda}H]=\rInd[X]\rtimes_{\rho}1.
$$
\end{prop}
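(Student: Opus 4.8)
The plan is to read off the proposition from the finite-type criterion \cite[Lemma 1.3]{KW2:discrete}, which characterizes a Hilbert $C^*$-bimodule of finite type by ten numbered conditions. Nearly all of the verification has already been carried out in the displayed computations preceding Lemma \ref{lem:finite}, albeit for a \emph{twisted} covariant system. The first step is therefore to specialize those computations to the present untwisted covariant system $(A, B, X, \rho, \sigma, \lambda, H^0)$ by taking the twisting unitaries $u$ and $v$ to be trivial, so that $\widehat{u}(h, l) = \epsilon(h)\epsilon(l)1^0$ and $\widehat{v}(h, l) = \epsilon(h)\epsilon(l)1^0$ and all subscripts $\rho, u$ and $\sigma, v$ collapse to $\rho$ and $\sigma$. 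Under this specialization the module-action and inner-product formulas reduce, via the counit identities $h_{(1)}\epsilon(h_{(2)}) = h$ and $\epsilon(h_{(1)})h_{(2)} = h$, to the ordinary crossed-product formulas; for instance $(a\rtimes_{\rho}h)(x\rtimes_{\lambda}l) = a[h_{(1)}\cdot_{\lambda}x]\rtimes_{\lambda}h_{(2)}l$.

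With this in hand, I would match each verified identity to its numbered condition: that $X\rtimes_{\lambda}H$ is a left $A\rtimes_{\rho}H$- and right $B\rtimes_{\sigma}H$-module gives Conditions (1) and (4); the sesquilinearity, positivity and symmetry of the two inner products give Conditions (2), (3), (5), (6); the associativity identity $(a\rtimes_{\rho}h)[(x\rtimes_{\lambda}l)(b\rtimes_{\sigma}m)] = [(a\rtimes_{\rho}h)(x\rtimes_{\lambda}l)](b\rtimes_{\sigma}m)$ gives Condition (7); and the adjoint-compatibility between the left and right actions and inner products, verified in the final two displays before Lemma \ref{lem:finite}, gives Condition (8). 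The remaining Conditions (9) and (10) require a finite right basis and a finite left basis with the reconstruction property, and these are exactly the two formulas of Lemma \ref{lem:finite}, namely that $\{w_i\rtimes_{\lambda}1\}_{i=1}^{n}$ is a right $B\rtimes_{\sigma}H$-basis and $\{z_j\rtimes_{\lambda}1\}_{j=1}^{m}$ is a left $A\rtimes_{\rho}H$-basis of $X\rtimes_{\lambda}H$. Hence \cite[Lemma 1.3]{KW2:discrete} applies and $X\rtimes_{\lambda}H$ is of finite type, while the two index formulas are precisely Corollary \ref{cor:index}.

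Since the heavy algebraic identities were already discharged, the proof here is essentially an assembly, and the only point demanding care is the trivial-twist specialization: one must check that the $\widehat{u}$- and $\widehat{v}$-factors genuinely disappear from each displayed formula, so that the module and inner-product structures to which Lemma \ref{lem:finite} and Corollary \ref{cor:index} refer really coincide with those of the untwisted crossed product. This is routine but is the one place where an error could slip in, so I would state the specialization explicitly rather than leave it implicit.
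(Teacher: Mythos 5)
Your proposal is correct and takes essentially the same route as the paper: the paper's proof of Proposition \ref{prop:type} is precisely the one-line assembly of Lemma \ref{lem:finite} (supplying the two basis conditions), Corollary \ref{cor:index} (the index formulas) and \cite[Lemma 1.3]{KW2:discrete}, with Conditions (1)--(8) already discharged in the displayed computations of Section \ref{crossed} for the twisted covariant system and hence, a fortiori, for trivial $u$ and $v$. Your explicit trivial-twist specialization (and the matching of each display to its numbered condition) only makes precise what the paper leaves implicit.
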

\begin{proof}This is immediate by Lemma \ref{lem:finite}, Corollary \ref{cor:index} and
\cite [Lemma 1.3]{KW2:discrete}.
\end{proof}

\begin{lem}\label{lem:finite2}With the above notations, if
$(A, B, X, \rho, u, \sigma, v, \lambda, H^0 )$ is a twisted covariant system
and $X$ is an $A-B$-equivalence
bimodule, then for any $x\in X$, 
$h\in H$,
\begin{align*}
x\rtimes_{\lambda}h
& =\sum_{i=1}^n (w_i \rtimes_{\lambda}1 )\la w_i \rtimes_{\lambda}1 \, , \, 
x\rtimes_{\lambda}h \ra_{B\rtimes_{\sigma, z}H} \\
& = \sum_{j=1}^m {}_{A\rtimes_{\rho, w}H} \la x\rtimes_{\lambda}h \, , \, 
z_j \rtimes_{\lambda}1 \ra (z_j \rtimes_{\lambda} 1).
\end{align*}
\end{lem}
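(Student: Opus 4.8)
The plan is to carry this out exactly as in the proof of Lemma~\ref{lem:finite}, the essential differences being that the cocycle factors $\widehat{u}$ and $\widehat{v}$ now appear everywhere and that I will lean on the hypothesis that $X$ is an $A$--$B$-equivalence bimodule. The latter gives me \emph{simultaneously} a right $B$-basis $\{w_i\}_{i=1}^n$ and a left $A$-basis $\{z_j\}_{j=1}^m$, together with the imprimitivity relation ${}_A\la x,y\ra z=x\la y,z\ra_B$, which is what permits the inner products to be moved past module elements during the telescoping. I will also use repeatedly that for a finite dimensional $C^*$-Hopf algebra $S^2=\id$, so that $h_{(2)}S(h_{(1)})=\epsilon(h)1_H$, not merely $S(h_{(1)})h_{(2)}=\epsilon(h)1_H$.

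For the first equality I would substitute directly into the defining formula for the right $B\rtimes_{\sigma,v}H$-valued inner product and then apply the right action, both given in the construction above. The crucial simplification is that the basis vectors sit in the slot carrying the Hopf element $1$; hence every cocycle factor has the form $\widehat{v}(1,\cdot)$ or $\widehat{v}(\cdot,1)$ and collapses to a counit by the normalization~(3)' for $v$. After this collapse $\la w_i\rtimes_{\lambda}1\,,\,x\rtimes_{\lambda}h\ra_{B\rtimes_{\sigma,v}H}=\la w_i,x\ra_B\rtimes_{\sigma,v}h$, and applying the right action of $w_i\rtimes_{\lambda}1$ again trivializes the cocycles, leaving $\sum_{i=1}^n w_i\la w_i,x\ra_B\rtimes_{\lambda}h$, which is $x\rtimes_{\lambda}h$ by the right-basis reconstruction in $X$.

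For the second equality I would first compute ${}_{A\rtimes_{\rho,u}H}\la x\rtimes_{\lambda}h\,,\,z_j\rtimes_{\lambda}1\ra$ from its definition; since the second slot carries $1$, the factor $\widehat{v}(S(h_{(1)})^*,1)$ reduces to a counit by~(3)', leaving a term of the form ${}_A\la x,[S(h_{(1)})^*\cdot_{\lambda}z_j]\ra\rtimes_{\rho,u}h_{(2)}$. I would then apply the twisted left action of this element to $z_j\rtimes_{\lambda}1$, which reintroduces cocycle factors $\widehat{u},\widehat{v}$ and a leg $h_{(2)}$ acting on $z_j$, and sum over $j$. Using the compatibility relations~(1)',(3)' of the left coaction (and~($**$)' in the twisted form) one rewrites the summand as a single application of the coaction to ${}_A\la[S(h_{(1)})\cdot_{\lambda}x],z_j\ra z_j$, so that $\sum_j{}_A\la\,\cdot\,,z_j\ra z_j=\,\cdot$ collapses the sum to $[h_{(2)}\cdot_{\lambda}[S(h_{(1)})\cdot_{\lambda}x]]\rtimes_{\lambda}h_{(3)}$ modified by surviving cocycle factors. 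Invoking~($**$)' and the antipode relation $h_{(2)}S(h_{(1)})=\epsilon(h)1_H$ (via $S^2=\id$), this contracts to $x\rtimes_{\lambda}h$ once the accumulated $\widehat{u},\widehat{v}$ terms are shown to cancel.

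The two substitutions are routine once the unit-slot cocycles are removed by~(3)'. The main obstacle is the cocycle bookkeeping in the second equality: one must track the many Sweedler legs produced by the twisted left action and by expanding $h_{(2)}\cdot_{\lambda}[S(h_{(1)})\cdot_{\lambda}x]$ through~($**$)', and verify that the resulting $\widehat{u}$ and $\widehat{v}$ factors telescope to the identity by the $2$-cocycle identity~(2)' and the normalization~(3)'. Because $X$ is an equivalence bimodule, the associativity of the two inner products is available to reorganize these products, exactly as the untwisted argument used ${}_A\la x,y\ra z=x\la y,z\ra_B$; this is the step I expect to demand the most care.
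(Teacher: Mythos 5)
Your first equality is exactly the paper's computation and is fine: with $1_H$ in the relevant slots every cocycle factor collapses by the normalization (3)$'$, the inner product reduces to $\la w_i , x \ra_B \rtimes_{\sigma, v}h$, and the right-basis property of $\{w_i\}$ finishes. The gap is in the second equality. Your plan is to replay the untwisted proof of Lemma \ref{lem:finite}: collapse the $j$-sum to $[h_{(2)}\cdot_{\lambda}[S(h_{(1)})\cdot_{\lambda}x]]\rtimes_{\lambda}h_{(3)}$ (up to cocycle corrections) and then contract via ($**$)$'$ and $h_{(2)}S(h_{(1)})=\epsilon(h)1_H$. But the contraction this is modeled on is genuinely false for twisted coactions: expanding by ($**$)$'$ yields $\widehat{u}(h_{(4)}, S(h_{(3)}))[h_{(5)}S(h_{(2)})\cdot_{\lambda}x]\widehat{v^*}(h_{(6)}, S(h_{(1)}))\rtimes_{\lambda}h_{(7)}$, in which the Sweedler legs interleave, so no counit contraction applies to $h_{(5)}S(h_{(2)})$. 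Concretely, take $\lambda(x)=\rho(x)w^*$ with $\rho$ the trivial coaction and $w$ a unitary with $(\id\otimes\epsilon^0)(w)=1$, as in Lemmas \ref{lem:wexterior} and \ref{lem:exterior}; then $h_{(2)}\cdot_{\lambda}[S(h_{(1)})\cdot_{\lambda}x]=x\,\widehat{w^*}(S(h_{(1)}))\widehat{w^*}(h_{(2)})$, which equals $\epsilon(h)x$ only if $\widehat{w^*}\circ S$ happens to be the convolution inverse $\widehat{w}$ --- false for generic $w$. So the ``surviving cocycle factors'' carry essential content; moreover, they appear as $A$-elements and $B$-elements on the two sides of the module entry inside a left inner product already at the splitting step, which obstructs the clean use of (1)$'$, (3)$'$ and of $\sum_j {}_A\la \cdot , z_j \ra z_j = \cdot$ that your sketch assumes. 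Asserting that they ``telescope by (2)$'$ and (3)$'$'' is precisely the unproven crux, and the mechanism you name cannot close it as stated.

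The paper avoids composing $\lambda$-actions altogether, which is also why the equivalence-bimodule hypothesis appears in this lemma and not in Lemma \ref{lem:finite}. After the unit-slot collapse one has $\sum_j {}_A\la x \, , \, [S(h_{(1)})^* \cdot_{\lambda}z_j ] \ra [h_{(2)}\cdot_{\lambda}z_j ]\rtimes_{\lambda}h_{(3)}$; the associativity relation ${}_A\la x, y \ra z = x\la y, z \ra_B$ turns this into $\sum_j x \la [S(h_{(1)})^* \cdot_{\lambda}z_j ]\, , \, [h_{(2)}\cdot_{\lambda}z_j ] \ra_B \rtimes_{\lambda}h_{(3)}$; the weak covariance condition (4)$'$ of Definition \ref{Def:wcovariant} (which a twisted covariant system satisfies verbatim, since $S(h)^* = S(h^*)$ when $S^2=\id$) rewrites the inner product as $h_{(1)}\cdot_{\sigma}\la z_j , z_j \ra_B$; and fullness gives $\sum_j \la z_j , z_j \ra_B = 1_B$ (from the left-basis identity plus associativity), whence $x[h_{(1)}\cdot_{\sigma}1_B]\rtimes_{\lambda}h_{(2)} = x\rtimes_{\lambda}h$. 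Neither ($**$)$'$ nor the 2-cocycle identity (2)$'$ is needed anywhere. You should redirect your argument for the second equality along these lines rather than attempt the cocycle bookkeeping.
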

\begin{proof}For any $x\in X$, $h\in H$,
$$
\sum_{i=1}^n (w_i \rtimes_{\lambda}1)\la w_i \rtimes_{\lambda}1 \, , \, x\rtimes_{\lambda}h \ra_{B\rtimes_{\sigma, v}H}
=\sum_{i=1}^n w_i \la w_i \, , \, x \ra_B \rtimes_{\lambda}h
=x\rtimes_{\lambda}h .
$$
Also,
\begin{align*}
& \sum_{j=1}^m {}_{A\rtimes_{\rho, u}H} \la x\rtimes_{\lambda}h \, , \, z_j \rtimes_{\lambda}1 \ra
(z_j \rtimes_{\lambda}1 ) \\
& =\sum_{j=1}^m {}_A \la x \,  , \, [S(h_{(1)})^* \cdot_{\lambda}z_j ] \ra
[h_{(2)}\cdot_{\lambda}z_j ]\rtimes_{\lambda}h_{(3)} \\
& =\sum_{j=1}^m x \la [S(h_{(1)})^* \cdot_{\lambda}z_j ] \, , \, [h_{(2)}\cdot_{\lambda}z_j ] \ra_B \rtimes_{\lambda}h_{(3)} \\
& =\sum_{j=1}^m x [h_{(1)}\cdot_{\sigma} \la z_j , z_j \ra_B ]\rtimes_{\lambda}h_{(2)}=x\rtimes_{\lambda}h.
\end{align*}
Therefore, we obtain the conclusion.
\end{proof}

\begin{lem}\label{lem:full}With the above notations and assumptions,
if $X$ is an $A-B$-equivalence bimodule,
then the Hilbert $A\rtimes_{\rho, u}H-B\rtimes_{\sigma, v}H$-bimodule is full with the both-sided inner products.
\end{lem}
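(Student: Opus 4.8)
The plan is to establish fullness on each side separately, by showing that the closed linear span of each of the two inner products exhausts the corresponding crossed product. Write $J_A$ for the closed linear span of $\{{}_{A\rtimes_{\rho, u}H}\la \xi , \eta\ra : \xi , \eta\in X\rtimes_\lambda H\}$ and $I_B$ for the closed linear span of $\{\la \xi , \eta\ra_{B\rtimes_{\sigma, v}H} : \xi , \eta\in X\rtimes_\lambda H\}$. By the identities verified above (Conditions (2), (3) in \cite [Lemma 1.3]{KW2:discrete}), namely ${}_{A\rtimes_{\rho, u}H}\la \xi , \eta\ra^* = {}_{A\rtimes_{\rho, u}H}\la \eta , \xi\ra$ and $(a\rtimes_{\rho, u}h){}_{A\rtimes_{\rho, u}H}\la \xi , \eta\ra = {}_{A\rtimes_{\rho, u}H}\la (a\rtimes_{\rho, u}h)\xi , \eta\ra$, the set $J_A$ is a $*$-closed left ideal, hence a closed two-sided ideal of $A\rtimes_{\rho, u}H$; symmetrically, using Conditions (5), (6), $I_B$ is a closed two-sided ideal of $B\rtimes_{\sigma, v}H$. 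Since $X\rtimes_\lambda H = X\otimes H$ as a vector space, the elements $a\rtimes_{\rho, u}h$ span $A\rtimes_{\rho, u}H$ and the elements $b\rtimes_{\sigma, v}h$ span $B\rtimes_{\sigma, v}H$, so it suffices to prove that $J_A$ contains every $a\rtimes_{\rho, u}1$ and that $I_B$ contains every $b\rtimes_{\sigma, v}1$.

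First I would evaluate the two inner products on the generators $x\rtimes_\lambda 1$ and $y\rtimes_\lambda 1$. Setting $h = l = 1$ in the defining formulas collapses every Sweedler leg; using $S(1)^* = 1$, the relations $\widehat{u}(1, 1) = \widehat{v}(1, 1) = \widehat{v^*}(1, 1) = 1$ coming from Condition (3)' of the twisted coactions, and $1_H\cdot_{\sigma, v}b = b$, one obtains
\begin{align*}
{}_{A\rtimes_{\rho, u}H}\la x\rtimes_\lambda 1 , y\rtimes_\lambda 1\ra & = {}_A\la x , y\ra\rtimes_{\rho, u}1 , \\
\la x\rtimes_\lambda 1 , y\rtimes_\lambda 1\ra_{B\rtimes_{\sigma, v}H} & = \la x , y\ra_B\rtimes_{\sigma, v}1 .
\end{align*}
Because $X$ is an $A-B$-equivalence bimodule it is full on both sides, so $\{{}_A\la x , y\ra\}$ and $\{\la x , y\ra_B\}$ have dense linear span in $A$ and $B$, respectively. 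Passing to closures, $J_A$ contains every $a\rtimes_{\rho, u}1$ and $I_B$ contains every $b\rtimes_{\sigma, v}1$.

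Finally I would promote these to the full algebras by the ideal property. From the product rule together with $\widehat{u}(1, h_{(1)}) = \epsilon(h_{(1)})1$ and $\widehat{v}(1, h_{(1)}) = \epsilon(h_{(1)})1$ and the counit identity $\sum \epsilon(h_{(1)})h_{(2)} = h$, one checks
\begin{align*}
(a\rtimes_{\rho, u}1)(1\rtimes_{\rho, u}h) & = a\rtimes_{\rho, u}h , \\
(b\rtimes_{\sigma, v}1)(1\rtimes_{\sigma, v}h) & = b\rtimes_{\sigma, v}h
\end{align*}
for all $a\in A$, $b\in B$ and $h\in H$. Since $J_A$ and $I_B$ are two-sided ideals containing all $a\rtimes_{\rho, u}1$ and all $b\rtimes_{\sigma, v}1$, they contain all $a\rtimes_{\rho, u}h$ and all $b\rtimes_{\sigma, v}h$; as these span the respective crossed products, $J_A = A\rtimes_{\rho, u}H$ and $I_B = B\rtimes_{\sigma, v}H$. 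This is precisely fullness of $X\rtimes_\lambda H$ on both sides.

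The structural half of the argument (the ideal observation and the final promotion) is immediate. The one place demanding care is the base-case simplification of the two inner products at $h = l = 1$: there one must correctly track the star and antipode applied to $1$ and the counit contractions of the remaining Sweedler legs against $\widehat{u}$ and $\widehat{v}$ evaluated at the unit. I expect this bookkeeping to be the main, though entirely routine, obstacle, and it follows directly from the explicit inner-product formulas recorded above and Condition (3)' in the definition of a twisted coaction.
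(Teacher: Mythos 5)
Your proof is correct and takes essentially the same route as the paper's: both evaluate the inner products on elementary tensors with trivial $H$-leg to get ${}_{A\rtimes_{\rho, u}H}\la x\rtimes_{\lambda}1 \, , \, y\rtimes_{\lambda}1 \ra={}_A \la x , y \ra\rtimes_{\rho, u}1$, invoke fullness of $X$ on both sides, and use the closed-ideal property of the inner-product span to absorb $1\rtimes_{\rho, u}h$ and obtain all of $A\rtimes_{\rho, u}H$ and $B\rtimes_{\sigma, v}H$. The only cosmetic difference is that for the right inner product the paper computes $\la x\rtimes_{\lambda}1 \, , \, y\rtimes_{\lambda}h \ra_{B\rtimes_{\sigma, v}H}=\la x , y \ra_B \rtimes_{\sigma, v}h$ directly with general $h$, whereas you set $h=l=1$ and then multiply by $1\rtimes_{\sigma, v}h$ afterwards.
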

\begin{proof}
For any $x, y\in X$, ${}_{A\rtimes_{\rho, u}H} \la x\rtimes_{\lambda}1 \, , \, y\rtimes_{\lambda}1 \ra
={}_A \la x\, , \, y \ra \rtimes_{\rho, u}1$. Since
${}_{A\rtimes_{\rho, u} H} \la X\rtimes_{\lambda}H \, , \, X\rtimes_{\lambda}H \ra$
is a closed ideal of $A\rtimes_{\rho, u}H$, for any $x, y\in X$, $h\in H$,
$$
({}_A \la x \, , \, y \ra\rtimes_{\rho, u}1 )(1\rtimes_{\rho, u}h)={}_A \la x \, , \, y\ra\rtimes_{\rho, u}h\in {}_{A\rtimes_{\rho, u}H} \la
X\rtimes_{\lambda} H \, , \, X\rtimes_{\lambda}H \ra .
$$
Since ${}_A \la X \, , \, X \ra=A$, we obtain that
$$
{}_{A\rtimes_{\rho, u}H} \la X\rtimes_{\lambda}H \, , \, X\rtimes_{\lambda}H \ra =A\rtimes_{\rho, u}H .
$$
Also, for any $x, y\in X$, $h\in H$,
$$
\la x\rtimes_{\lambda}1 \, ,\, y\rtimes_{\lambda}h \ra_{B\rtimes_{\sigma, v}H}
=\la x\, , \, y\ra_B \rtimes_{\sigma, v}h \in \la X\rtimes_{\lambda}H \, , \, X\rtimes_{\lambda}H \ra_{B\rtimes_{\sigma, v}H} .
$$
Since $\la X \, , \, X \ra_B =B$, we obtain that
$$
\la X\rtimes_{\lambda}H \, , \, X\rtimes_{\lambda}H \ra_{B\rtimes_{\sigma, v}H}=B\rtimes_{\sigma, v}H .
$$
\end{proof}
  
\begin{cor}\label{cor:bimodule}With the above notations and assumptions, we suppose that
$X$ is an $A-B$-equivalence bimodule. Then
the $X\rtimes_{\lambda}H$ is an $A\rtimes_{\rho, u}H-B\rtimes_{\sigma, v}H$-equivalence bimodule.
\end{cor}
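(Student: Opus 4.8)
The plan is to verify directly the three defining properties of an $A\rtimes_{\rho, u}H - B\rtimes_{\sigma, v}H$-equivalence bimodule: that $X\rtimes_{\lambda}H$ is a Hilbert bimodule with compatible left and right inner products, that it is of finite type, and that it is full with the both-sided inner products. The key observation is that almost all of this work has already been carried out in the present section for an arbitrary twisted covariant system, so that the hypothesis ``$X$ is an equivalence bimodule'' serves only to upgrade the finite-type Hilbert bimodule to a full one.

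First I would recall that the long computation preceding Lemma \ref{lem:finite2} establishes that $X\rtimes_{\lambda}H$ is a Hilbert $A\rtimes_{\rho, u}H - B\rtimes_{\sigma, v}H$-bimodule: there the module actions, the two inner products, their symmetry, and their compatibility with the actions, i.e. Conditions (1)--(8) of \cite[Lemma 1.3]{KW2:discrete}, were checked directly from the twisted cocycle identities for $u$ and $v$, and none of these verifications uses fullness of $X$. Next, since $X$ is assumed to be an equivalence bimodule, Lemma \ref{lem:finite2} provides, for each $x\rtimes_{\lambda}h$, its expansions against the families $\{w_i \rtimes_{\lambda}1\}_{i=1}^n$ and $\{z_j \rtimes_{\lambda}1\}_{j=1}^m$; these serve respectively as a right $B\rtimes_{\sigma, v}H$-basis and a left $A\rtimes_{\rho, u}H$-basis, so that $X\rtimes_{\lambda}H$ is of finite type by \cite[Lemma 1.3]{KW2:discrete}. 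Finally, Lemma \ref{lem:full} shows that ${}_{A\rtimes_{\rho, u}H}\la X\rtimes_{\lambda}H \, , \, X\rtimes_{\lambda}H \ra = A\rtimes_{\rho, u}H$ and $\la X\rtimes_{\lambda}H \, , \, X\rtimes_{\lambda}H \ra_{B\rtimes_{\sigma, v}H} = B\rtimes_{\sigma, v}H$, that is, $X\rtimes_{\lambda}H$ is full on both sides. Assembling these three facts yields the conclusion.

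The genuinely delicate point lies not in the corollary itself but in the associativity (imprimitivity) relation between the two inner products of the crossed product, since this is where the cocycles $u$ and $v$ must cancel in precisely the right way; that relation, however, is already subsumed in the Hilbert-bimodule verification of the section. Hence the only additional ingredient supplied by the equivalence-bimodule hypothesis is fullness, and I expect no further obstacle: the corollary should follow formally by combining Lemma \ref{lem:finite2}, Lemma \ref{lem:full}, and \cite[Lemma 1.3]{KW2:discrete}.
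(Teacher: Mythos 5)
Your reduction of the corollary to ``Hilbert bimodule structure $+$ finite type $+$ fullness'' omits the one condition to which the paper's proof is actually devoted, and your justification for skipping it is incorrect. You claim that the associativity (imprimitivity) relation between the two inner products ``is already subsumed in the Hilbert-bimodule verification of the section.'' It is not. The long computations preceding Lemma \ref{lem:finite} verify Conditions (1)--(8) of \cite[Lemma 1.3]{KW2:discrete}, which make $X\rtimes_{\lambda}H$ a Hilbert bimodule of finite type in the Kajiwara--Watatani sense; in that framework the two inner products are only required to be compatible with the module actions (for instance ${}_{A\rtimes_{\rho, u}H}\la \xi(b\rtimes_{\sigma, v}h)\, , \, \eta \ra = {}_{A\rtimes_{\rho, u}H}\la \xi \, , \, \eta(b\rtimes_{\sigma, v}h)^* \ra$), and the associativity condition ${}_{A\rtimes_{\rho, u}H}\la \xi , \eta \ra \zeta = \xi \la \eta , \zeta \ra_{B\rtimes_{\sigma, v}H}$ is \emph{not} among them. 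Indeed it cannot be: Hilbert bimodules of finite type (such as those arising from conditional expectations of inclusions) generically fail it, which is exactly why ``equivalence bimodule'' is a strictly stronger notion than ``full Hilbert bimodule of finite type,'' and why the hypothesis that $X$ is an equivalence bimodule is needed for more than fullness.

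Consequently there is a genuine missing step: one must prove
$$
{}_{A\rtimes_{\rho, u}H} \la x\rtimes_{\lambda}h \, , \, y\rtimes_{\lambda}l \ra (z\rtimes_{\lambda}m )
=(x\rtimes_{\lambda}h )\la y\rtimes_{\lambda}l \, , \, z\rtimes_{\lambda}m \ra_{B\rtimes_{\sigma, v}H}
$$
for all $x, y, z\in X$ and $h, l, m\in H$, and this is precisely the content of the paper's proof: using the associativity ${}_A \la x , y \ra z = x \la y , z \ra_B$ in $X$ (available only because $X$ is an equivalence bimodule), together with the cocycle identities for $\widehat{v}$, both sides are computed to equal
$$
x\widehat{v^*}(h_{(1)}l_{(2)}^* , S(l_{(1)}^* ))[h_{(2)}l_{(3)}^* \cdot_{\sigma, v} \la y, z \ra_B ]
\widehat{v}(h_{(3)}l_{(4)}^* , m_{(1)})\rtimes_{\lambda}h_{(4)}l_{(5)}^* m_{(2)} .
$$
This is where the cocycles must cancel ``in precisely the right way,'' as you yourself anticipated; you correctly identified the delicate point but then wrongly declared it already done. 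Your remaining ingredients (Lemma \ref{lem:finite2} for finite type and Lemma \ref{lem:full} for fullness) are correct and agree with the paper, but without the displayed computation the proposal does not establish that $X\rtimes_{\lambda}H$ is an equivalence bimodule.
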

\begin{proof}By Lemma \ref{lem:full}, it suffices to show that
$$
{}_{A\rtimes_{\rho, u}H} \la x\rtimes_{\lambda}h \, , \, y\rtimes_{\lambda}l \ra (z\rtimes_{\lambda}m )
=(x\rtimes_{\lambda}h )\la y\rtimes_{\lambda}l \, , \, z\rtimes_{\lambda}m \ra_{B\rtimes_{\sigma, v}H}
$$
for any $x, y, z\in X$, $h, l, m\in H$. Since $X$ is an $A-B$-equivalence bimodule,
\begin{align*}
& (x\rtimes_{\lambda}h) \la y\rtimes_{\lambda}l \, , \, z\rtimes_{\lambda} m \ra_{B\rtimes_{\sigma, v}H} \\
& =x[h_{(1)}\cdot_{\sigma, v}\widehat{v}^* (l_{(2)}^* , S(l_{(1)})^* )[l_{(3)}^* \cdot_{\sigma, v}
\la y, z \ra_B ]\widehat{v}(l_{(4)}^* , m_{(1)})]\widehat{v}(h_{(2)}, l_{(5)}^* m_{(2)}) \\
& \rtimes_{\lambda}h_{(3)}l_{(6)}^* m_{(3)} \\
& =x[h_{(1)}\cdot_{\sigma, v}\widehat{v}^* (l_{(2)}^* , S(l_{(1)})^* )]
[h_{(2)}\cdot_{\sigma, v}[l_{(3)}^* \cdot_{\sigma, v}\la y, z \ra_B ]] \\
& \times [h_{(3)}\cdot_{\sigma, v}\widehat{v}(l_{(4)}^* , m_{(1)})]
\widehat{v}(h_{(4)}, l_{(5)}^* m_{(2)})\rtimes_{\lambda}h_{(5)}l_{(6)}^* m_{(3)} \\
& =x[h_{(1)}\cdot_{\sigma, v}\widehat{v}^* (l_{(2)}^* , S(l_{(1)}^* ))]\widehat{v}(h_{(2)}, l_{(3)}^* )
[h_{(3)}l_{(4)}^* \cdot_{\sigma, v} \la y, z \ra_B ] \\
& \times\widehat{v}(h_{(4)}l_{(5)}^* , m_{(1)})\rtimes_{\lambda}h_{(5)}l_{(6)}^* m_{(2)} \\
& =x\widehat{v^*}(h_{(1)}l_{(2)}^* , S(l_{(1)}^* ))[h_{(2)}l_{(3)}^* \cdot_{\sigma, v} \la y, z \ra_B ]
\widehat{v}(h_{(3)}l_{(4)}^* , m_{(1)})\rtimes_{\lambda}h_{(4)}l_{(5)}^* m_{(2)}.
\end{align*}
On the other hand,
\begin{align*}
& {}_{A\rtimes_{\rho, u}H} \la x\rtimes_{\lambda}h \, , \, y\rtimes_{\lambda}l \ra (z\rtimes_{\lambda}m ) \\
& ={}_A \la x, [S(h_{(2)}l_{(3)}^* )^* \cdot_{\lambda}y]\widehat{v}(S(h_{(1)}l_{(2)}^* )^* , l_{(1)}) \ra
[h_{(3)}l_{(4)}^* \cdot_{\lambda}z]\widehat{v}(h_{(4)}l_{(5)}^* , m_{(1)}) \\
& \rtimes_{\lambda}h_{(5)}l_{(6)}^* m_{(2)} \\
& =x \la [S(h_{(2)}l_{(3)}^* )^* \cdot_{\lambda}y]\widehat{v}(S(h_{(1)}l_{(2)}^* )^* , l_{(1)}) \, , \, [h_{(3)}l_{(4)}^*
\cdot_{\lambda}z] \ra_B \widehat{v}(h_{(4)}l_{(5)}^* , m_{(1)}) \\
& \rtimes_{\lambda} h_{(5)}l_{(6)}^* m_{(2)} \\
& =x\widehat{v^*}(h_{(1)}l_{(2)}^* , S(l_{(1)}^* ))[h_{(2)}l_{(3)}^* \cdot_{\sigma, v} \la y, z \ra_B ]
\widehat{v}(h_{(3)}l_{(4)}^* , m_{(1)})\rtimes_{\lambda}h_{(4)}l_{(5)}^* m_{(2)}.
\end{align*}
Therefore, we obtain the conclusion.
\end{proof}

By the above discussions, we obtain the following:

\begin{cor}\label{cor:crossed}$(1)$ Let $(A, B, X, \rho, u, \sigma, v, \lambda, H^0 )$
be a twisted covariant system. We suppose that $X$ is an $A-B$-equivalent bimodule. Then the crossed product
$X\rtimes_{\lambda}H$ is an $A\rtimes_{\rho, u}H-B\rtimes_{\sigma, v}H$-equivalence
bimodule.
\newline
$(2)$ Let $(A, B, X, \rho, \sigma, \lambda, H^0 )$ be a covariant system. Then the crossed
$X\rtimes_{\lambda}H$ is a Hilbert $A\rtimes_{\rho}H-B\rtimes_{\sigma}H$-bimodule
of finite type.
\end{cor}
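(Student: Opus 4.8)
The plan is to collect the ingredients already assembled in this section, since both assertions are packaging statements rather than genuinely new computations. The bulk of the work---the verification that $X\rtimes_{\lambda}H$ carries the structure of a Hilbert $A\rtimes_{\rho, u}H-B\rtimes_{\sigma, v}H$-bimodule---was carried out in the displayed calculations preceding Lemma \ref{lem:finite}, where we checked Conditions (1)--(8) of \cite[Lemma 1.3]{KW2:discrete}. I would therefore treat those as established and organize the two parts around what remains.

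For part (1), assume $X$ is an $A-B$-equivalence bimodule. I would invoke three facts in turn: Lemma \ref{lem:finite2} supplies the finite right $B\rtimes_{\sigma, v}H$-basis and left $A\rtimes_{\rho, u}H$-basis expansions, so $X\rtimes_{\lambda}H$ is of finite type; Lemma \ref{lem:full} shows that it is full with respect to both the left $A\rtimes_{\rho, u}H$-valued and the right $B\rtimes_{\sigma, v}H$-valued inner products; and Corollary \ref{cor:bimodule} establishes the imprimitivity (associativity) identity ${}_{A\rtimes_{\rho, u}H}\la \xi, \eta \ra \zeta = \xi \la \eta, \zeta \ra_{B\rtimes_{\sigma, v}H}$. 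Combining fullness on both sides, the imprimitivity condition, and the bimodule structure is precisely the definition of an equivalence bimodule, which yields the claim.

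For part (2), I would observe that a covariant system is the special case of a twisted covariant system in which the two-cocycles are trivial, so the untwisted crossed products $A\rtimes_{\rho}H$ and $B\rtimes_{\sigma}H$ are recovered by setting $u$ and $v$ equal to $1\otimes 1^0 \otimes 1^0$. In this case Lemma \ref{lem:finite} gives the finite-basis expansions and Corollary \ref{cor:index} computes the left and right indices; Proposition \ref{prop:type} then assembles these with \cite[Lemma 1.3]{KW2:discrete} to conclude directly that $X\rtimes_{\lambda}H$ is a Hilbert $A\rtimes_{\rho}H-B\rtimes_{\sigma}H$-bimodule of finite type, with the index values recorded in Corollary \ref{cor:index}.

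The point demanding care is not any single estimate but the bookkeeping: one must confirm that each preceding lemma was proved under exactly the hypotheses imposed here---a twisted covariant system with $X$ an equivalence bimodule in part (1), and an arbitrary covariant system in part (2)---so that the cited results apply verbatim. Since no further computation is required beyond what has already been carried out, I expect the corollary to follow immediately once these references are lined up correctly.
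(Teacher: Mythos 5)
Your proposal is correct and matches the paper's own (implicit) proof: the paper states this corollary with the phrase ``By the above discussions,'' meaning exactly the assembly you describe --- the verification of Conditions (1)--(8) of \cite[Lemma 1.3]{KW2:discrete} in the displayed computations, Lemma \ref{lem:finite2}, Lemma \ref{lem:full} and Corollary \ref{cor:bimodule} for part (1), and Lemma \ref{lem:finite}, Corollary \ref{cor:index} and Proposition \ref{prop:type} for part (2). Your observation that the untwisted case is the specialization $u=v=1\otimes 1^0\otimes 1^0$ is also consistent with how the paper treats the two cases.
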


In the situation of Corollary \ref{cor:crossed}(1), let $X\rtimes_{\lambda}H$ be the crossed product
associated to a twisted covariant system $(A, B, X, \rho, u, \sigma, v, \lambda, H^0)$,
where $X$ is an $A-B$-equivalence bimodule. Then
we define the dual covariant system with $X\rtimes_{\lambda}H$ as follows:
Let $\widehat{\rho}$ and $\widehat{\sigma}$ be the dual coactions of $H$ on $A\rtimes_{\rho, u}H$
and $B\rtimes_{\sigma, v}H$ of $(\rho, u)$ and $(\sigma, v)$, respectively. Let $\widehat{\lambda}$ be the
dual coaction of $H$ on $X\rtimes_{\lambda}H$ defined by
$$
\widehat{\lambda}(x\rtimes_{\lambda}h)=(x\rtimes_{\lambda}h_{(1)})\otimes h_{(2)}
$$
for any $x\in X$, $h\in H$. Then by easy computations, we can see that
$$
(A\rtimes_{\rho, u}H\, , \, B\rtimes_{\sigma, v}H\, , \, X\rtimes_{\lambda}H \, , \, 
\widehat{\rho}\, , \, \widehat{\sigma} \, , \, \widehat{\lambda} \, , \, H)
$$
is a covariant system. Hence we obtain the following:

\begin{cor}\label{cor:dual}Let $(\rho, u)$ and $(\sigma, v)$ be twisted coactions of $H^0$ on $A$ and $B$, respectively.
Then the following conditions are equivalent:
\newline
$(1)$ The twisted coaction $(\rho, u)$ is strongly Morita equivalent to the twisted coaction $(\sigma, v)$,
\newline
$(2)$ The dual coaction $\widehat{\rho}$ of $(\rho, u)$ is strongly Morita equivalent to
the dual coaction $\widehat{\sigma}$ of $(\sigma, v)$.
\end{cor}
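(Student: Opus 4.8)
The plan is to prove the two implications separately, treating $(1)\Rightarrow(2)$ as essentially a repackaging of the constructions already in hand, and $(2)\Rightarrow(1)$ by passing to the double crossed product and appealing to crossed product duality. For $(1)\Rightarrow(2)$, suppose $(\rho,u)$ is strongly Morita equivalent to $(\sigma,v)$. By Definition \ref{Def:morita}(iii) there are an $A-B$-equivalence bimodule $X$ and a twisted coaction $\lambda$ of $H^0$ on $X$ making $(A,B,X,\rho,u,\sigma,v,\lambda,H^0)$ a twisted covariant system. By Corollary \ref{cor:crossed}(1) the crossed product $X\rtimes_{\lambda}H$ is an $A\rtimes_{\rho,u}H-B\rtimes_{\sigma,v}H$-equivalence bimodule, and by the discussion preceding the corollary the dual data $(A\rtimes_{\rho,u}H,\,B\rtimes_{\sigma,v}H,\,X\rtimes_{\lambda}H,\,\widehat{\rho},\,\widehat{\sigma},\,\widehat{\lambda},\,H)$ form a covariant system. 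Since its underlying bimodule is an equivalence bimodule, $\widehat{\lambda}$ witnesses that $\widehat{\rho}$ is strongly Morita equivalent to $\widehat{\sigma}$, which is $(2)$.

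For $(2)\Rightarrow(1)$ the idea is to apply the easy direction once more and then undo the dualization. Since $\widehat{\rho}$ and $\widehat{\sigma}$ are coactions of $H=(H^0)^0$, applying the implication $(1)\Rightarrow(2)$ with the roles of $H$ and $H^0$ interchanged to the strongly Morita equivalent coactions $\widehat{\rho},\widehat{\sigma}$ yields that the bidual coactions $\widehat{\widehat{\rho}}$ and $\widehat{\widehat{\sigma}}$ on the double crossed products $(A\rtimes_{\rho,u}H)\rtimes_{\widehat{\rho}}H^0$ and $(B\rtimes_{\sigma,v}H)\rtimes_{\widehat{\sigma}}H^0$ are strongly Morita equivalent. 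Next I invoke the crossed product duality theorem, which identifies the double crossed product with the amplification $A\otimes M_N(\BC)$, where $N=\dim H$, in such a way that $\widehat{\widehat{\rho}}$ becomes exterior equivalent to the amplified twisted coaction $(\rho\otimes\id,\,u\otimes I_N)$, and likewise for $(\sigma,v)$. By Lemma \ref{lem:ample}, $(\rho,u)$ is strongly Morita equivalent to $(\rho\otimes\id,u\otimes I_N)$; by Lemma \ref{lem:exterior} exterior equivalence is a special case of strong Morita equivalence; and, using the transitivity recorded in Corollary \ref{cor:relation}, these assemble into the chain
$$
(\rho,u)\ \sim\ (\rho\otimes\id,u\otimes I_N)\ \sim\ \widehat{\widehat{\rho}}\ \sim\ \widehat{\widehat{\sigma}}\ \sim\ (\sigma\otimes\id,v\otimes I_N)\ \sim\ (\sigma,v),
$$
where $\sim$ denotes strong Morita equivalence. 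This gives $(1)$.

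The main obstacle is the $(2)\Rightarrow(1)$ direction, and within it the precise invocation of crossed product duality: one must verify that under the Takai-type isomorphism $(A\rtimes_{\rho,u}H)\rtimes_{\widehat{\rho}}H^0\cong A\otimes M_N(\BC)$ the bidual coaction $\widehat{\widehat{\rho}}$ is exterior equivalent to $(\rho\otimes\id,u\otimes I_N)$, and in particular that the twisting unitary $u$ is carried correctly through both dualizations. Tracking the cocycles $u$ and $v$ through the double dual, and confirming that the exterior-equivalence unitary satisfies the normalization $(\id\otimes\epsilon^0)(w)=1$ required in Definition \ref{Def:wexterior}, is the delicate bookkeeping. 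Once this identification is in place, the remaining steps are formal applications of Lemmas \ref{lem:ample} and \ref{lem:exterior} together with transitivity.
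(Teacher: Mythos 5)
Your proof is correct and takes essentially the same route as the paper: the forward implication via the dual covariant system and Corollary \ref{cor:crossed}(1), and the converse by dualizing once more and identifying $\widehat{\widehat{\rho}}$, up to exterior equivalence under the Takai-type isomorphism $(A\rtimes_{\rho,u}H)\rtimes_{\widehat{\rho}}H^0\cong M_N(A)$, with $(\rho\otimes\id, u\otimes I_N)$, then concluding with Lemmas \ref{lem:exterior} and \ref{lem:ample} and Corollary \ref{cor:relation}. The cocycle bookkeeping you flag as the delicate point is exactly what the paper outsources to \cite[Theorem 3.3]{KT2:coaction}, which states the exterior equivalence with the correctly transported twisting unitary $(\Psi\otimes\id_{H^0}\otimes\id_{H^0})(u\otimes I_N)$.
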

\begin{proof}By the above discussion, it is clear that Condition (1) implies Condition (2). We suppose Condition (2).
Then by Condition (2), we can see that $\widehat{\widehat{\rho}}$ is strongly Morita equivalent to
$\widehat{\widehat{\sigma}}$, where $\widehat{\widehat{\rho}}$ and $\widehat{\widehat{\sigma}}$
are the dual coactions of $\widehat{\rho}$ and $\widehat{\sigma}$, respectively.
By \cite [Theorem 3.3]{KT2:coaction}, there is an isomorphism $\Psi$
of $M_N (A)$ onto $A\rtimes_{\rho, u}H\rtimes_{\widehat{\rho}}H^0$ such that
$\widehat{\widehat{\rho}}$
is exterior equivalent to the twisted coaction
$$
((\Psi\otimes\id)\circ(\rho\otimes\id)\circ\Psi^{-1}\, , \, (\Psi\otimes\id_{H^0}\otimes\id_{H^0})(u\otimes I_N )).
$$
Hence by Lemma \ref{lem:exterior},
$\widehat{\widehat{\rho}}$ is strongly Morita equivalent to $(\rho\otimes\id, u\otimes I_N)$.
Thus, by Lemma \ref{lem:ample}
and Corollary \ref{cor:relation}, $\widehat{\widehat{\rho}}$ is strongly Morita equivalent to $(\rho, u)$.
Similarly $\widehat{\widehat{\sigma}}$ is strongly Morita equivalent to $(\sigma, v)$. Therefore,
by Corollary \ref{cor:relation}, $(\rho, u)$ is strongly Morita equivalent to $(\sigma, v)$.
\end{proof}

Also, in the situation of Corollary \ref{cor:crossed}(2), we can see that
$$
(A\rtimes_{\rho}H, \, B\rtimes_{\sigma}H, \, X\rtimes_{\lambda}H, \, \widehat{\rho}, \,
\widehat{\sigma}, \, \widehat{\lambda}, H)
$$
is a covariant system in the same way as above.

\section{Duality}\label{sec:duality}
In this section, we present a duality theorem for a crossed product of a Hilbert $C^*$-bimodule
of finite type by a (twisted) coaction of a finite dimensional $C^*$-Hopf algebra in the same way as in
\cite {KT2:coaction}. As mentioned in Section \ref{sec:intro}, Guo and Zhang have
already obtained a duality result using the language of multiplicative
unitary elements and Kac systems in \cite {GZ:Kac}. But we give our duality result because
our approach to coactions of a finite dimensional $C^*$-Hopf algebra on a unital
$C^*$-algebra is a useful addition to the main result in Section \ref {sec:rohlin}.
First, we suppose Condition (1) or Condition (2) in Corollary \ref{cor:crossed}.
In the both cases, we can consider the  dual covariant systems
$$
(A\rtimes_{\rho, u}H\, , \, B\rtimes_{\sigma, v}H\, , \, X\rtimes_{\lambda}H \, , \, 
\widehat{\rho}\, , \, \widehat{\sigma} \, , \, \widehat{\lambda} \, , \, H),
$$
$$
(A\rtimes_{\rho}H, \, B\rtimes_{\sigma}H, \, X\rtimes_{\lambda}H, \, \widehat{\rho}, \,
\widehat{\sigma}, \, \widehat{\lambda}, H).
$$
Let $\Lambda$ be the set
of all triplets $(i, j, k)$ where $i,j=1,2,\dots,d_k$ and $k=1,2,\dots,K$ and $\sum_{k=1}^Kd_k^2 =N$.
For each $I=(i, j, k)\in \Lambda$, let $W_I^{\rho}$, $V_I^{\rho}$ be elements in
$A\rtimes_{\rho, u}H\rtimes_{\widehat{\rho}}H^0$ defined by
$$
W_I^{\rho}=\sqrt{d_k}\rtimes_{\rho, u}w_{ij}^k , \quad
V_I^{\rho}=(1\rtimes_{\rho, u}1\rtimes_{\widehat{\rho}}\tau)(W_I^{\rho}\rtimes_{\widehat{\rho}}1^0 ) .
$$
Similarly for each $I=(i, j, k)\in\Lambda$, we define elements
$$
W_I^{\sigma}=\sqrt{d_k}\rtimes_{\sigma, v}w_{ij}^k , \quad
V_I^{\sigma}=(1\rtimes_{\sigma, v}1\rtimes_{\widehat{\sigma}}\tau)(W_I^{\sigma}\rtimes_{\widehat{\sigma}}1^0 )
$$
in $B\rtimes_{\sigma, v}H\rtimes_{\widehat{\sigma}}H^0$. We regard $M_N (\BC)$ as a Hilbert
$M_N (\BC)-M_N (\BC)$-bimodule in the usual way. Let $X\otimes M_N (\BC)$ be an exterior
tensor product of $X$ and $M_N (\BC)$, which is a Hilbert $A\otimes M_N (\BC)-B\otimes M_N (\BC)$-bimodule.
In the same way as Lemma \ref{lem:tensor}, we can see that $X\otimes M_N (\BC)$ is of finite type.
Let $\{f_{IJ}\}_{I,J\in\Lambda}$ be a system of matrix units of $M_N (\BC)$. Let $\Psi_X$ be a linear map
from $X\otimes M_N (\BC)$ to $X\rtimes_{\lambda}H\rtimes_{\widehat{\lambda}}H^0$ defined by
$$
\Psi_X (\sum_{I, J}x_{IJ}\otimes f_{IJ})
=\sum_{I, J}V_I^{\rho*}(x_{IJ}\rtimes_{\lambda}1\rtimes_{\widehat{\lambda}}1^0 )V_J^{\sigma}
$$
for any $x_{IJ}\in X$. Let $\Psi_A$ and $\Psi_B$ be isomorphisms of $A\otimes M_N(\BC)$ and $B\otimes M_N (\BC)$
onto $A\rtimes_{\rho, u}H\rtimes_{\widehat{\rho}}H^0$ and $B\rtimes_{\sigma, v}H\rtimes_{\widehat{\sigma}}H^0$
defined by
\begin{align*}
\Psi_A (\sum_{I, J}a_{IJ}\otimes f_{IJ})
& =\sum_{I, J}V_I^{\rho*}(a_{IJ}\rtimes_{\rho, u}1\times_{\widehat{\rho}}1^0 )V_J^{\rho}, \\
\Psi_B (\sum_{I, J}b_{IJ}\otimes f_{IJ})
& =\sum_{I, J}V_I^{\sigma*}(a_{IJ}\rtimes_{\sigma, v}1\times_{\widehat{\sigma}}1^0 )V_J^{\sigma}
\end{align*}
for any $a_{IJ}\in A$, $b_{IJ}\in B$, respectively (see \cite{KT2:coaction}).

\begin{lem}\label{lem:module}With the above notations,
\begin{align*}
(1)\, & \Psi_X ((\sum_{I, J}a_{IJ}\otimes f_{IJ})(\sum_{I, J}x_{IJ}\otimes f_{IJ})) \\
& =\Psi_A (\sum_{I, J}a_{IJ}\otimes f_{IJ})\Psi_X (\sum_{I, J}x_{IJ}\otimes f_{IJ}) , \\
(2)\, & \Psi_X ((\sum_{I, J}x_{IJ}\otimes f_{IJ})(\sum_{I, J}b_{IJ}\otimes f_{IJ})) \\
& =\Psi_X (\sum_{I, J}x_{IJ}\otimes f_{IJ})\Psi_B (\sum_{I, J}b_{IJ}\otimes f_{IJ}) , \\
(3)\, & {}_{A\rtimes_{\rho, u}H\rtimes_{\widehat{\rho}}H^0}
\la \Psi_X (\sum_{I, J}x_{IJ}\otimes f_{IJ}) \, , \, \Psi_X (\sum_{I, J}  y_{IJ}\otimes f_{IJ}) \ra \\
& = \Psi_A ({}_{A\otimes M_N (\BC)} \la \sum_{I, J}x_{IJ}\otimes f_{IJ}\, , \, \sum_{I, J}y_{IJ}\otimes f_{IJ} \ra ) , \\
(4)\, & \la \Psi_X (\sum_{I, J}x_{IJ}\otimes f_{IJ}) \, , \,  \Psi_X(\sum_{I, J} y_{IJ}\otimes f_{IJ})
\ra_{B\rtimes_{\sigma, v}H\rtimes_{\widehat{\sigma}}H^0} \\
& = \Psi_B ( \la \sum_{I, J}x_{IJ}\otimes f_{IJ}\, , \,
\sum_{I, J}y_{IJ}\otimes f_{IJ} \ra_{B\otimes M_N (\BC)} )
\end{align*}
for any $a_{IJ}\in A$, $b_{IJ}\in B$, $x_{IJ}\, , \, y_{IJ}\in X$, $I, J\in \Lambda$.
\end{lem}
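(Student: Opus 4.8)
The plan is to verify the four identities directly from the definitions of $\Psi_X$, $\Psi_A$ and $\Psi_B$, using as a black box the algebraic relations among the elements $V_I^{\rho}$ and $V_I^{\sigma}$ that were established in \cite{KT2:coaction} and that already make $\Psi_A$ and $\Psi_B$ into $*$-isomorphisms. Concretely, I would first record the facts I need: each $V_I^{\rho}$ (resp. $V_I^{\sigma}$) is a partial isometry in $A\rtimes_{\rho, u}H\rtimes_{\widehat{\rho}}H^0$ (resp. in $B\rtimes_{\sigma, v}H\rtimes_{\widehat{\sigma}}H^0$); the products $V_I^{\rho*}V_J^{\rho}$ form a system of matrix units, so that $V_J^{\rho}V_K^{\rho*}$ collapses with a Kronecker delta $\delta_{JK}$ when sandwiched against the canonical copy of $A$; and the analogous statements hold for $V_I^{\sigma}$ over $B$. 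I would also recall from Lemma \ref{lem:full} (iterated to the double crossed product) that the inner products of canonical elements reduce as ${}_{A\rtimes_{\rho, u}H\rtimes_{\widehat{\rho}}H^0}\la x\rtimes_{\lambda}1\rtimes_{\widehat{\lambda}}1^0 \, , \, y\rtimes_{\lambda}1\rtimes_{\widehat{\lambda}}1^0 \ra = {}_A \la x \, , \, y \ra\rtimes_{\rho, u}1\rtimes_{\widehat{\rho}}1^0$, and symmetrically on the right.

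For identity $(1)$ I would first evaluate the product inside $A\otimes M_N (\BC)$ via $f_{IJ}f_{KL}=\delta_{JK}f_{IL}$ and the left action $(a\otimes f)(x\otimes g)=ax\otimes fg$, reducing the left-hand side to $\Psi_X$ applied to $\sum_{I, L}(\sum_J a_{IJ}x_{JL})\otimes f_{IL}$. On the right-hand side I would multiply out $\Psi_A(\cdots)\Psi_X(\cdots)$ and use the matrix-unit relations to collapse the interior products $V_J^{\rho}V_K^{\rho*}$; what remains is $V_I^{\rho*}\bigl((a_{IJ}x_{JL})\rtimes_{\lambda}1\rtimes_{\widehat{\lambda}}1^0\bigr)V_L^{\sigma}$, which matches because the left $A\rtimes_{\rho, u}H\rtimes_{\widehat{\rho}}H^0$-action on $X\rtimes_{\lambda}H\rtimes_{\widehat{\lambda}}H^0$ restricts on the canonical copies to the left $A$-action on $X$. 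Identity $(2)$ is entirely symmetric, using the $V_I^{\sigma}$, the right matrix-unit collapse over $B$, and the fact that the right $B\rtimes_{\sigma, v}H\rtimes_{\widehat{\sigma}}H^0$-action restricts to the right $B$-action on $X$.

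For the inner-product identities $(3)$ and $(4)$ I would use that $X\rtimes_{\lambda}H\rtimes_{\widehat{\lambda}}H^0$ is an equivalence bimodule, by Corollary \ref{cor:crossed}$(1)$ applied to the dual covariant system, so both of its inner products are available. Substituting the definition of $\Psi_X$ into the left inner product and pulling the outer factors $V_I^{\rho*}$, $V_K^{\rho*}$ out by sesquilinearity, then moving $V_J^{\sigma}$, $V_L^{\sigma}$ across the inner product through the right-module relation ${}\la \xi b \, , \, \eta \ra = {}\la \xi \, , \, \eta b^* \ra$, the interior product $V_J^{\sigma}V_L^{\sigma*}$ collapses with $\delta_{JL}$. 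The whole expression then reduces to $\sum_{I, K}V_I^{\rho*}\bigl((\sum_J {}_A \la x_{IJ} \, , \, y_{KJ}\ra)\rtimes_{\lambda}1\rtimes_{\widehat{\lambda}}1^0\bigr)V_K^{\rho}$, which is exactly $\Psi_A$ of $\sum_{I, K}(\sum_J {}_A \la x_{IJ} \, , \, y_{KJ}\ra)\otimes f_{IK}$; since ${}_{M_N (\BC)}\la f_{IJ} \, , \, f_{KL}\ra=\delta_{JL}f_{IK}$ this equals $\Psi_A$ of the $A\otimes M_N (\BC)$-valued inner product, as required. Identity $(4)$ follows the same pattern, with the roles of $V^{\rho}$ and $V^{\sigma}$, and of the left and right inner products, interchanged.

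I expect the main obstacle to lie in identities $(3)$ and $(4)$: the inner products on $X\rtimes_{\lambda}H\rtimes_{\widehat{\lambda}}H^0$ carry the cocycles $u$, $v$ and the dual coactions $\widehat{\rho}$, $\widehat{\sigma}$, $\widehat{\lambda}$, so the delicate point is to confirm that sandwiching by the $V$'s genuinely strips these away and reduces everything to the plain $X$-valued inner products, without leaving stray cocycle or antipode factors. This bookkeeping, rather than any conceptual difficulty, is the technical heart of the proof.
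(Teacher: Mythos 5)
Your proposal matches the paper's own proof essentially step for step: the paper verifies all four identities by exactly the direct computation you outline, collapsing $V_J^{\rho}V_{J_1}^{\rho *}=\delta_{J J_1}(1\rtimes 1\rtimes_{\widehat{\rho}}\tau)$ and its $\sigma$-analogue via \cite[Lemma 3.1]{KT2:coaction}, absorbing the resulting Jones projection into the adjacent $V$'s, and reducing inner products of canonical elements to ${}_A \la x \, , \, y \ra\rtimes_{\rho, u}1\rtimes_{\widehat{\rho}}1^0$ (resp.\ the $B$-side), the cocycle bookkeeping you flag as the technical heart indeed evaporating because all elements sit in the canonical copies where the cocycle terms evaluate through the counit. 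The only blemish is cosmetic: in your reduction for identity $(3)$ the middle factor should be ${}_A \la x_{IJ} \, , \, y_{KJ} \ra\rtimes_{\rho, u}1\rtimes_{\widehat{\rho}}1^0$ rather than $\rtimes_{\lambda}1\rtimes_{\widehat{\lambda}}1^0$, since that element lies in $A$, as your subsequent identification with $\Psi_A$ shows you intended.
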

\begin{proof}
This is immediate by routine computations. Indeed,
$$
\Psi_X ((\sum_{I, J}a_{IJ}\otimes f_{IJ})(\sum_{I , J}x_{I J}\otimes f_{I J}))
=\sum_{I, J, L}V_I^{\rho*}(a_{IL}x_{LJ}\rtimes_{\lambda}1\rtimes_{\widehat{\lambda}}1^0 )V_J^{\sigma} .
$$
On the other hand, by \cite [Lemma 3.1]{KT2:coaction}
\begin{align*}
& \Psi_A (\sum_{I, J}a_{IJ}\otimes f_{IJ})\Psi_X (\sum_{L, M}x_{LM}\otimes f_{LM}) \\
& =\sum_{I, J, M}V_I^{\rho*}(a_{IJ}\rtimes_{\rho, u}1\rtimes_{\widehat{\rho}}1^0 )
(1\rtimes_{\rho, u}1\rtimes_{\widehat{\rho}}\tau)
(x_{JM}\rtimes_{\lambda}1\rtimes_{\widehat{\lambda}}1^0 )V_M^{\sigma} \\
& =\sum_{I, J, M}V_I^{\rho*}(a_{IJ}x_{JM}\rtimes_{\lambda}1\rtimes_{\widehat{\lambda}}1^0 )V_M^{\sigma} .
\end{align*}
Thus we obtain Equation (1). Similarly we can obtain the Equation (2).
Also, by \cite [Lemma 3.1]{KT2:coaction}
\begin{align*}
& {}_{A\rtimes_{\rho, u}H\rtimes_{\widehat{\rho}}H^0} \la \Psi_X (\sum_{I, J}x_{IJ}\otimes f_{IJ}) \, , \,
\Psi_X (\sum_{I, J}  y_{IJ}\otimes f_{IJ}) \ra \\
& =\sum_{I, J, I_1 , J_1}{}_{A\rtimes_{\rho, u}H\rtimes_{\widehat{\rho}}H^0}
\la V_I^{\rho*}(x_{IJ}\rtimes_{\lambda}1\rtimes_{\widehat{\lambda}}1^0)V_J^{\sigma} \, , \, 
V_{I_1}^{\rho*}(y_{I_1 J_1}\rtimes_{\lambda}1\rtimes_{\widehat{\lambda}}1^0 )V_{J_1}^{\sigma} \ra \\
& =\sum_{I, J, I_1}V_I^{\rho*}({}_{A\rtimes_{\rho, u}H} \la x_{IJ}\rtimes_{\lambda}1 \, , \, y_{I_1 J}\rtimes_{\lambda}1 \ra
\rtimes_{\widehat{\rho}}\tau)V_{I_1}^{\rho} \\
& =\sum_{I, J, I_1}V_I^{\rho*}({}_A \la x_{IJ} \, , \, y_{I_1 J} \ra\rtimes_{\rho, u}
\rtimes_{\widehat{\rho}}1^0 )V_{I_1}^{\rho} .
\end{align*}
On the other hand,
\begin{align*}
& \Psi_A (\, {}_{A\otimes M_N (\BC)} \la \sum_{I, J}x_{IJ}\otimes f_{IJ} \, , \, \sum_{I_1 , J_1}y_{I_1 , J_1}
\otimes f_{I_1 , J_1} \ra ) \\
& =\sum_{I, J, I_1}\Psi_A ({}_A \la x_{IJ} \, , \, y_{I_1 J} \ra\otimes f_{II_1}) \\
& =\sum_{I, J, I_1}V_I^{\rho*}({}_A \la x_{IJ} \, , \, y_{I_1 J} \ra\rtimes_{\rho, u}1\rtimes_{\widehat{\rho}}1^0 )
V_{I_1}^{\rho} .
\end{align*}
Thus we obtain Equation (3). Furthermore,
\begin{align*}
& \la \Psi_X (\sum_{I, J}x_{IJ}\otimes f_{IJ}) \, , \, \Psi_X (\sum_{I_1 , J_1}y_{I_1 J_1}\otimes f_{I_1 J_1})
\ra_{B\rtimes_{\sigma, v}H\rtimes_{\widehat{\sigma}}H^0} \\
& =\sum_{I, J, J_1}V_J^{\sigma*}(\la x_{IJ} \, , \, y_{IJ_1} \ra_B\rtimes_{\sigma, v}1\rtimes_{\widehat{\sigma}}1^0 )
V_{J_1}^{\sigma} \\
& =\Psi_B (\la \sum_{I, J}x_{IJ}\otimes f_{IJ} \, , \, \sum_{I_1 , J_1}y_{I_1 J_1}\otimes f_{I_i J_i}
\ra_{B\otimes M_N (\BC)}) .
\end{align*}
Thus we obtain Equation (4).
\end{proof}
By the above lemma, we can see that $\Psi_X$ is injective. Next, we show that $\Psi_X$ is surjective.

\begin{lem}\label{lem:onto}With the above notations, 
$$
(X\rtimes_{\lambda}H\rtimes_{\widehat{\lambda}}1^0 )(1\rtimes_{\sigma, v}1\rtimes_{\widehat{\sigma}}\tau)
(B\rtimes_{\sigma, v}H\rtimes_{\widehat{\sigma}}1^0 )=X\rtimes_{\lambda}H\rtimes_{\widehat{\lambda}}H^0 .
$$
\end{lem}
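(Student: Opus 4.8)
The plan is to establish the nontrivial inclusion $\supseteq$; the inclusion $\subseteq$ is clear, since each of the three factors lies in $X\rtimes_{\lambda}H\rtimes_{\widehat{\lambda}}H^0$ and the product of a bimodule element with coefficient‑algebra elements stays in the bimodule. Because $X\rtimes_{\lambda}H\rtimes_{\widehat{\lambda}}H^0$ is linearly spanned by the elementary tensors $x\rtimes_{\lambda}h\rtimes_{\widehat{\lambda}}\phi$ with $x\in X$, $h\in H$, $\phi\in H^0$, it suffices to realize each such generator inside the product space.

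First I would run two multiplications in the outer (untwisted) crossed product, using the right action $(x'\rtimes_{\widehat{\lambda}}\phi)(b'\rtimes_{\widehat{\sigma}}\omega)=x'[\phi_{(1)}\cdot_{\widehat{\sigma}}b']\rtimes_{\widehat{\lambda}}\phi_{(2)}\omega$ together with $\psi\cdot_{\widehat{\sigma}}(b\rtimes_{\sigma,v}m)=(b\rtimes_{\sigma,v}m_{(1)})\la\psi,m_{(2)}\ra$. Multiplying the left factor by the middle element collapses the last leg to $\tau$, since $\Delta^0(1^0)=1^0\otimes1^0$ and $\la1^0,1\ra=1$:
$$
(x\rtimes_{\lambda}h\rtimes_{\widehat{\lambda}}1^0)(1\rtimes_{\sigma,v}1\rtimes_{\widehat{\sigma}}\tau)=x\rtimes_{\lambda}h\rtimes_{\widehat{\lambda}}\tau .
$$
Multiplying this on the right by $1\rtimes_{\sigma,v}m\rtimes_{\widehat{\sigma}}1^0$ (i.e.\ taking $b=1$ in the right factor), expanding via $(x\rtimes_{\lambda}h)(1\rtimes_{\sigma,v}m)=x\,\widehat{v}(h_{(1)},m_{(1)})\rtimes_{\lambda}h_{(2)}m_{(2)}$ (which uses $h\cdot_{\sigma,v}1=\epsilon(h)1$), and writing $\la\tau_{(1)},n\ra\tau_{(2)}=\mathcal{F}(n)$, I obtain
$$
(x\rtimes_{\lambda}h\rtimes_{\widehat{\lambda}}\tau)(1\rtimes_{\sigma,v}m\rtimes_{\widehat{\sigma}}1^0)=x\,\widehat{v}(h_{(1)},m_{(1)})\rtimes_{\lambda}h_{(2)}m_{(2)}\rtimes_{\widehat{\lambda}}\mathcal{F}(m_{(3)}) .
$$

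The heart of the matter is to show that, as $x$, $h$ and $m$ vary, these elements span every generator. Identifying $X\rtimes_{\lambda}H\rtimes_{\widehat{\lambda}}H^0$ with $X\otimes H\otimes H^0$, this is exactly the surjectivity of the linear map $G(x\otimes h\otimes m)=x\,\widehat{v}(h_{(1)},m_{(1)})\otimes h_{(2)}m_{(2)}\otimes\mathcal{F}(m_{(3)})$ between spaces of equal finite dimension. In the untwisted model ($v=1^0$) this factors as $G=(\id\otimes\id\otimes\mathcal{F})\circ(\id_X\otimes\Phi)$ with $\Phi(h\otimes m)=hm_{(1)}\otimes m_{(2)}$; here $\Phi$ is the Hopf--Galois map, bijective with inverse $g\otimes n\mapsto gS(n_{(1)})\otimes n_{(2)}$ (using $m_{(1)}S(m_{(2)})\otimes m_{(3)}=1\otimes m$), while $\mathcal{F}\colon H\to H^0$, characterized by $\la\mathcal{F}(n),n'\ra=\la\tau,nn'\ra$, is bijective because the Frobenius form attached to the faithful Haar trace $\tau$ is nondegenerate. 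Hence $G$ is a bijection, and for a given generator $x\rtimes_{\lambda}h'\rtimes_{\widehat{\lambda}}\phi$ I select $h_\alpha,m_\alpha\in H$ and scalars $c_\alpha$ with $\sum_\alpha c_\alpha G(x\otimes h_\alpha\otimes m_\alpha)=x\rtimes_{\lambda}h'\rtimes_{\widehat{\lambda}}\phi$, which exhibits it inside the product space by multilinearity of $x\rtimes_{\lambda}(-)\rtimes_{\widehat{\lambda}}(-)$ in its last two legs.

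The main obstacle is the bijectivity of $G$ in the genuinely twisted case, where the cocycle factor $\widehat{v}(h_{(1)},m_{(1)})$ couples all three legs: one must verify that this twisted Hopf--Galois map is still invertible. I expect this to follow from the cocycle conditions (2)$'$ and (3)$'$ in the definition of a twisted coaction, which are precisely the associativity constraints making $X\rtimes_{\lambda}H$ a well-defined bimodule, by writing the inverse of $G$ in terms of $\widehat{v^*}$ and the antipode $S$; everything else is routine crossed‑product bookkeeping. Alternatively, the spanning can be bootstrapped from the corresponding algebra‑level identity for $B\rtimes_{\sigma,v}H\rtimes_{\widehat{\sigma}}H^0$ in \cite{KT2:coaction}, together with the observation that $(X\rtimes_{\lambda}H\rtimes_{\widehat{\lambda}}1^0)(B\rtimes_{\sigma,v}H\rtimes_{\widehat{\sigma}}1^0)\subseteq X\rtimes_{\lambda}H\rtimes_{\widehat{\lambda}}1^0$.
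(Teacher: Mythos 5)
Your two preliminary multiplications are correct: $(x\rtimes_{\lambda}h\rtimes_{\widehat{\lambda}}1^0)(1\rtimes_{\sigma,v}1\rtimes_{\widehat{\sigma}}\tau)=x\rtimes_{\lambda}h\rtimes_{\widehat{\lambda}}\tau$, and the subsequent product with $1\rtimes_{\sigma,v}m\rtimes_{\widehat{\sigma}}1^0$ gives $x\,\widehat{v}(h_{(1)},m_{(1)})\rtimes_{\lambda}h_{(2)}m_{(2)}\rtimes_{\widehat{\lambda}}\mathcal{F}(m_{(3)})$ as you claim. But your primary route stops exactly at the step you yourself flag as the main obstacle: the surjectivity of $G$ in the genuinely twisted case is asserted as an expectation, not proved, and the justification you do offer --- ``between spaces of equal finite dimension'' --- is invalid, since $X$ is an arbitrary equivalence bimodule and $X\otimes H\otimes H$ is infinite-dimensional in general; a dimension count could at best apply to the last two legs, and in the twisted case $G$ does not factor through the last two legs precisely because the cocycle $\widehat{v}(h_{(1)},m_{(1)})$ acts on the $X$-leg. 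The gap is real but is filled by less machinery than you anticipate: no cocycle identity $(2)'$ or $(3)'$ is needed, only unitarity of $v$. Indeed, write
\[
G=\bigl(\id_X\otimes((\id\otimes\mathcal{F})\circ\Phi)\bigr)\circ\Theta,
\qquad
\Theta(x\otimes h\otimes m)=x\,\widehat{v}(h_{(1)},m_{(1)})\otimes h_{(2)}\otimes m_{(2)} .
\]
Then $\Theta$ is invertible with inverse $x\otimes h\otimes m\mapsto x\,\widehat{v^*}(h_{(1)},m_{(1)})\otimes h_{(2)}\otimes m_{(2)}$, because the convolution identity $\widehat{v}(h_{(1)},m_{(1)})\widehat{v^*}(h_{(2)},m_{(2)})=\epsilon(h)\epsilon(m)1$ is just $vv^*=1\otimes 1^0\otimes 1^0$ read in $\Hom(H\otimes H,B)$; combined with your (correct) bijectivity of the Galois map $\Phi$ and of the Frobenius map $\mathcal{F}$ attached to the faithful Haar trace $\tau$, this makes $G$ bijective and completes your first argument.

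Your one-sentence fallback is, in substance, the paper's own proof. The paper invokes the quasi-basis resolution of the identity from \cite[Proposition 3.18]{KT1:inclusion},
\[
\sum_{i,j,k}(\sqrt{d_k}\rtimes_{\sigma,v}w_{ij}^k\rtimes_{\widehat{\sigma}}1^0)^*
(1\rtimes_{\sigma,v}1\rtimes_{\widehat{\sigma}}\tau)
(\sqrt{d_k}\rtimes_{\sigma,v}w_{ij}^k\rtimes_{\widehat{\sigma}}1^0)
=1\rtimes_{\sigma,v}1\rtimes_{\widehat{\sigma}}1^0 ,
\]
right-multiplies a generator $x\rtimes_{\lambda}h\rtimes_{\widehat{\lambda}}\phi$ by it, and pushes the $H^0$-leg through the first factor (producing the scalars $\phi(w_{j_3 j}^{k*})$), so that each summand lands visibly in $(X\rtimes_{\lambda}H\rtimes_{\widehat{\lambda}}1^0)(1\rtimes_{\sigma,v}1\rtimes_{\widehat{\sigma}}\tau)(B\rtimes_{\sigma,v}H\rtimes_{\widehat{\sigma}}1^0)$; this is exactly your ``algebra-level identity plus absorption'' bootstrap made explicit, the only link you leave unsaid being $x\rtimes_{\lambda}h\rtimes_{\widehat{\lambda}}\phi=(x\rtimes_{\lambda}h\rtimes_{\widehat{\lambda}}1^0)(1\rtimes_{\sigma,v}1\rtimes_{\widehat{\sigma}}\phi)$, which is the same computation as Lemma \ref{lem:commute}. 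So: your second route is correct and coincides with the paper; your first route is genuinely different (an explicit twisted Galois--Frobenius bijection instead of a quasi-basis expansion) and has the merit of realizing each generator from a single $x$, but as submitted it is incomplete at its declared crux and rests there on a flawed dimension argument.
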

\begin{proof}
Let $x\in X$, $h\in H$, $\phi\in H^0$. Since
$$
\sum_{i,j,k}(\sqrt{d_k}\rtimes_{\sigma, v}w_{ij}^k \rtimes_{\widehat{\sigma}}1^0 )^* 
(1\rtimes_{\sigma, v}1\rtimes_{\widehat{\sigma}}\tau)(\sqrt{d_k}\rtimes_{\sigma, v}w_{ij}^k \rtimes_{\widehat{\sigma}}1^0 )
=1\rtimes_{\sigma, v}1\rtimes_{\widehat{\sigma}}1^0
$$
by \cite [Proposition 3.18]{KT1:inclusion},
\begin{align*}
& x\rtimes_{\lambda}h\rtimes_{\widehat{\lambda}}\phi \\
& =\sum_{i,j,k}(x\rtimes_{\lambda}h\rtimes_{\widehat{\lambda}}\phi)
(\sqrt{d_k}\rtimes_{\sigma, v}w_{ij}^k \rtimes_{\widehat{\sigma}}1^0 )^* 
(1\rtimes_{\sigma, v}1\rtimes_{\widehat{\sigma}}\tau) \\
& \times (\sqrt{d_k}\rtimes_{\sigma, v}w_{ij}^k 
\rtimes_{\widehat{\sigma}}1^0 ) \\
& =\sum_{i, j, k, j_1 , j_2}d_k ((x\rtimes_{\lambda}h)[\phi\cdot_{\widehat{\sigma}}
(\widehat{v}(S(w_{j_1 j_2}^k ), w_{ij_1}^k )^*
\rtimes_{\sigma, v}w_{j_2 j}^{k*})]\rtimes_{\widehat{\lambda}}\tau) \\
& \times (1\rtimes_{\sigma, v}w_{ij}^k \rtimes_{\widehat{\sigma}}1^0 ) \\
& =\sum_{i, j, k, j_1 , j_2 , j_3}d_k \phi(w_{j_3 j}^{k*})((x\rtimes_{\lambda}h)(\widehat{v}(S(w_{j_1 j_2}^k ), w_{ij_1}^k )^*
\rtimes_{\sigma, v}w_{j_2 j_3}^{k*})\rtimes_{\widehat{\lambda}}1^0 ) \\
& \times (1\rtimes_{\sigma, v}1\rtimes_{\widehat{\sigma}}\tau)
(1\rtimes_{\sigma, v}w_{ij}^k \rtimes_{\widehat{\sigma}}1^0 ).
\end{align*}
Therefore we obtain the conclusion.
\end{proof}

Let $E_1^{\sigma}$ be the canonical conditional expectation from $B\rtimes_{\sigma, v}H$
onto $B$ defined by $E_1^{\sigma}(b\rtimes_{\sigma, v}h)=\tau(h)b$ for any $b\in B$, $h\in H$.
Let $E_1^{\lambda}$ be a linear map from $X\rtimes_{\lambda}H$ onto $X$ defined by
$$
E_1^{\lambda}(x\rtimes_{\lambda}h)=\tau(h)x
$$
for any $x\in X$, $h\in H$. 

\begin{lem}\label{lem:inverse}With the above notations, for any $x\in X$, $h\in H$,
$$
\sum_{i, j, k} (\sqrt{d_k }\rtimes_{\rho, u}w_{ij}^k )^* E_1^{\lambda}((\sqrt{d_k}\rtimes_{\rho, u}w_{ij}^k )
(x\rtimes_{\lambda}h))=x\rtimes_{\lambda}h .
$$
\end{lem}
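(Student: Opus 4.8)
The plan is to verify this as a Pimsner--Popa--type reconstruction formula: the elements $\sqrt{d_k}\rtimes_{\rho,u}w_{ij}^k$ are a quasi-basis for the slice map $E_1^\lambda$, and I would prove the identity by a direct computation that, once the Haar trace has collapsed one $H$-leg, reduces to the Schur orthogonality of the comatrix units. First I would expand the left action $(\sqrt{d_k}\rtimes_{\rho,u}w_{ij}^k)(x\rtimes_\lambda h)$ using the left-module formula of Section~\ref{crossed} together with the comatrix comultiplication $\Delta(w_{ij}^k)=\sum_l w_{il}^k\otimes w_{lj}^k$ of Szyma\'nski and Peligrad; this writes the product as a sum over the internal indices $l,m$ with outermost $H$-leg $w_{mj}^k h_{(2)}$ and a factor $\widehat v(w_{lm}^k,h_{(1)})$. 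Applying $E_1^\lambda$ then evaluates $\tau$ on that outermost leg, replacing it by $\tau(w_{mj}^k h_{(2)})$ and collapsing the corresponding summation, and leaves an element of $X$.

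Next I would compute $(\sqrt{d_k}\rtimes_{\rho,u}w_{ij}^k)^*$ from the involution of the twisted crossed product together with the unitarity relation $(w_{ij}^k)^*=S(w_{ji}^k)$, and left-multiply it, viewing the element of $X$ above inside $X\rtimes_\lambda H$ via $\rtimes_\lambda 1$. Using $\Delta(S(w_{ji}^k))=\sum_p S(w_{pi}^k)\otimes S(w_{jp}^k)$ and merging the two resulting $\cdot_\lambda$ actions through the covariance identity $h\cdot_\lambda[l\cdot_\lambda x]=hl\cdot_\lambda x$ makes the product $S(w_{pi}^k)w_{il}^k$ appear. Summing over $i$ I would invoke the antipode axiom $\sum_i S(w_{pi}^k)w_{il}^k=\delta_{pl}1_H$ and the normalization $1_H\cdot_\lambda x=x$, which removes the $\cdot_\lambda$ action entirely and leaves a term $d_k\,\tau(w_{lj}^k h)\,x\rtimes_\lambda S(w_{jl}^k)$. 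Finally, summing over $j$ and $k$ I would apply the dual-basis identity
\[
\sum_{k,j,l}d_k\,\tau(w_{lj}^k h)\,S(w_{jl}^k)=h ,
\]
which holds because $d_k\,\tau(w_{pq}^k\,(w_{p'q'}^{k'})^*)=\delta_{kk'}\delta_{pp'}\delta_{qq'}$ exhibits $\{d_k\,\tau(w_{pq}^k\,\cdot\,)\}$ as the basis of $H^0$ dual to $\{(w_{pq}^k)^*\}$; this reconstructs precisely $x\rtimes_\lambda h$.

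The main obstacle is the cocycle bookkeeping. In the untwisted case, where $\widehat u(h,l)=\epsilon(h)\epsilon(l)1$ and similarly for $\widehat v$, every twist factor becomes a counit and the computation above goes through verbatim. In general $\widehat v$ enters through the left action and $\widehat u$ enters through the involution $(\,\cdot\,)^*$, and merging the two $\cdot_\lambda$ actions introduces further $\widehat u(\,\cdot\,,\,\cdot\,)$ and $\widehat v^*(\,\cdot\,,\,\cdot\,)$ factors via the twisted covariance identity $(**)'$. These factors attach to various Sweedler legs of $w_{ij}^k$ and of $h$, and the delicate point is to transport them past the $\cdot_\lambda$ actions and recombine them, using the $2$-cocycle identity and the normalization $\widehat v(h,1)=\widehat v(1,h)=\epsilon(h)1^0$, into the trivial factors of the untwisted computation, so that the clean antipode and orthogonality collapses can be applied.
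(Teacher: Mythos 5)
Your proposal is correct and takes essentially the same route as the paper's proof: a direct Sweedler-notation computation that expands the left action using $\Delta(w_{ij}^k)=\sum_l w_{il}^k\otimes w_{lj}^k$, applies $E_1^{\lambda}$ to collapse the outer leg against $\tau$, computes the adjoint (which is where $\widehat{u}$ enters), merges the two $\cdot_{\lambda}$ actions through the twisted covariance identity $(**)'$ so that all $\widehat{u}$, $\widehat{v}$ cocycle factors cancel via the $2$-cocycle identity and unitarity of the corepresentation matrices --- the paper's displayed chain of equalities is precisely the bookkeeping you defer, and it indeed terminates in your untwisted intermediate expression $\sum_{j,k,l}d_k\,\tau(w_{lj}^k h)\,x\rtimes_{\lambda}S(w_{jl}^k)$. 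The only cosmetic difference is the final collapse: the paper proves $\sum_{j,k,l}d_k\,\tau(w_{lj}^k h)\,S(w_{jl}^k)=h$ by inserting $h_{(2)}S(h_{(3)})$ and using the Haar projection $e=\sum_{i,k}\frac{d_k}{N}w_{ii}^k$ together with $N\tau(eh)=\epsilon(h)$, whereas you invoke the Schur orthogonality $d_k\,\tau\bigl(w_{pq}^k (w_{p'q'}^{k'})^*\bigr)=\delta_{kk'}\delta_{pp'}\delta_{qq'}$ directly; these are equivalent standard consequences of Szyma\'nski and Peligrad's Theorem 2.2.
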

\begin{proof}
This is also immediate by routine computations. Indeed, for any $x\in X$, $h\in H$,
by \cite [Theorem 2.2]{SP:saturated},
\begin{align*}
& \sum_{i, j, k}(\sqrt{d_k}\rtimes_{\rho, u}w_{ij}^k )^* E_1^{\lambda}((\sqrt{d_k}\rtimes_{\rho, u}w_{ij}^k )
(x\rtimes_{\lambda}h)) \\
& =\sum_{i, j, k, j_1 , j_2 , s, s_1 , s_2 , s_3}d_k \widehat{u^*}(w_{ss_1}^{k*}, w_{si}^k )[w_{s_1 s_2}^{k*}\cdot_{\lambda}
[w_{ij_1}^k \cdot_{\lambda}x]][w_{s_2 s_3}^{k*}\cdot_{\sigma, v}\widehat{v}(w_{j_1 j_2}^k , h_{(1)})] \\
& \rtimes_{\lambda}\tau(w_{j_2 j}^k h_{(2)})w_{s_3 j}^{k*} \\
& =\sum_{i, j, k, j_1 , j_2 , s_2, s_3}d_k x\widehat{v^*}(w_{is_2}^{k*}, w_{ij_1}^k )[w_{s_2 s_3}^{k*}\cdot_{\sigma, v}
\widehat{v}(w_{j_1 j_2}^k , h_{(1)})]\rtimes_{\lambda}\tau(w_{j_2 j}^k h_{(2)})w_{s_3 j}^{k*} \\
& =\sum_{i, j, k, j_1 , j_2 , s_2, s_3}d_k x\widehat{v}(w_{is_2}^{k*}w_{ij_1}^k , h_{(1)})
\widehat{v^*}(w_{s_2 s_3}^{k*}, w_{j_1 j_2}^k h_{(2)})\tau(w_{j_2 j}^k h_{(3)})\rtimes_{\lambda}w_{s_3 j}^{k*} \\
& =\sum_{j, k, s_2}d_k x\tau(w_{s_2 j}^k h)\rtimes_{\lambda}S(w_{js_2}^k ) 
=\sum_{i, k, s_2}d_k x\tau(w_{s_2 j}^k h_{(1)})\rtimes_{\lambda}S(w_{j s_2}^k h_{(2)}S(h_{(3)})) \\
& =x\rtimes_{\lambda}S(\tau(Neh_{(1)})S(h_{(2)}))=x\rtimes_{\lambda}h .
\end{align*}
Therefore, we obtain the conclusion.
\end{proof}

\begin{lem}\label{lem:commute}With the above notations,
$$
(1\rtimes_{\rho, u}1\rtimes_{\widehat{\rho}}\phi)(x\rtimes_{\lambda}1\rtimes_{\widehat{\lambda}}1^0 )
=x\rtimes_{\lambda}1\rtimes_{\widehat{\lambda}}\phi=(x\rtimes_{\lambda}1\rtimes_{\widehat{\lambda}}1^0 )
(1\rtimes_{\sigma, v}1\rtimes_{\widehat{\sigma}}\phi)
$$
for any $x\in X$, $\phi\in H^0$.
\end{lem}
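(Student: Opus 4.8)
The plan is to unwind the definitions of the left and right module actions on the double crossed product $X\rtimes_{\lambda}H\rtimes_{\widehat{\lambda}}H^0$ and to reduce each of the two equalities to the counit axiom for $H^0$. The relevant covariant system is $(A\rtimes_{\rho, u}H, B\rtimes_{\sigma, v}H, X\rtimes_{\lambda}H, \widehat{\rho}, \widehat{\sigma}, \widehat{\lambda}, H)$, so the second crossed product is taken with respect to the dual coactions $\widehat{\rho}, \widehat{\sigma}, \widehat{\lambda}$ of $H$. Applying the crossed-product formulas of Section \ref{crossed} with the roles of $H$ and $H^0$ interchanged (and noting that these dual coactions are untwisted), the left action of $A\rtimes_{\rho, u}H\rtimes_{\widehat{\rho}}H^0$ reads $(c\rtimes_{\widehat{\rho}}\phi)(m\rtimes_{\widehat{\lambda}}\psi)=c[\phi_{(1)}\cdot_{\widehat{\lambda}}m]\rtimes_{\widehat{\lambda}}\phi_{(2)}\psi$, and the right action of $B\rtimes_{\sigma, v}H\rtimes_{\widehat{\sigma}}H^0$ reads $(m\rtimes_{\widehat{\lambda}}\psi)(d\rtimes_{\widehat{\sigma}}\phi)=m[\psi_{(1)}\cdot_{\widehat{\sigma}}d]\rtimes_{\widehat{\lambda}}\psi_{(2)}\phi$, where $\cdot_{\widehat{\lambda}}$ and $\cdot_{\widehat{\sigma}}$ denote the $H^0$-actions induced by $\widehat{\lambda}$ and $\widehat{\sigma}$ and $\Delta^0(\phi)=\phi_{(1)}\otimes\phi_{(2)}$.

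First I would record the elementary fact that on elements of the form $x\rtimes_{\lambda}1$ the dual coaction acts trivially in the $H$-slot. Since $\Delta(1_H)=1_H\otimes 1_H$, the definition $\widehat{\lambda}(x\rtimes_{\lambda}h)=(x\rtimes_{\lambda}h_{(1)})\otimes h_{(2)}$ gives $\widehat{\lambda}(x\rtimes_{\lambda}1)=(x\rtimes_{\lambda}1)\otimes 1_H$, and likewise $\widehat{\sigma}(1\rtimes_{\sigma, v}1)=(1\rtimes_{\sigma, v}1)\otimes 1_H$. Consequently, using $\phi\cdot_{\widehat{\lambda}}m=(\id\otimes\phi)(\widehat{\lambda}(m))$ together with $\phi(1_H)=\epsilon^0(\phi)$, I obtain $\phi\cdot_{\widehat{\lambda}}(x\rtimes_{\lambda}1)=\epsilon^0(\phi)(x\rtimes_{\lambda}1)$ and $\phi\cdot_{\widehat{\sigma}}(1\rtimes_{\sigma, v}1)=\epsilon^0(\phi)(1\rtimes_{\sigma, v}1)$ for every $\phi\in H^0$.

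For the left equality I would substitute $c=1\rtimes_{\rho, u}1$, $m=x\rtimes_{\lambda}1$, $\psi=1^0$ into the left-action formula, which yields $(1\rtimes_{\rho, u}1\rtimes_{\widehat{\rho}}\phi)(x\rtimes_{\lambda}1\rtimes_{\widehat{\lambda}}1^0)=\epsilon^0(\phi_{(1)})(x\rtimes_{\lambda}1)\rtimes_{\widehat{\lambda}}\phi_{(2)}$, and then collapse it by the counit identity $\epsilon^0(\phi_{(1)})\phi_{(2)}=\phi$ to recover $x\rtimes_{\lambda}1\rtimes_{\widehat{\lambda}}\phi$. For the right equality I would substitute $m=x\rtimes_{\lambda}1$, $\psi=1^0$, $d=1\rtimes_{\sigma, v}1$ into the right-action formula; here $\Delta^0(1^0)=1^0\otimes 1^0$ and $\epsilon^0(1^0)=1$ make the $\widehat{\sigma}$-action factor collapse at once, while $1^0\phi=\phi$ finishes it, again producing $x\rtimes_{\lambda}1\rtimes_{\widehat{\lambda}}\phi$.

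I do not expect a genuine obstacle: once the correct dualized, untwisted module-action formulas are in place, both identities are one-line Sweedler manipulations resting on $\Delta(1_H)=1_H\otimes 1_H$, $\phi(1_H)=\epsilon^0(\phi)$, and the counit axiom for $H^0$. The only point demanding care is bookkeeping the $H\leftrightarrow H^0$ swap when passing to the dual system, so that the comultiplication occurring in the crossed-product multiplication is $\Delta^0$ on $H^0$ and not $\Delta$ on $H$.
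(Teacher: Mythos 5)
Your proof is correct and takes essentially the same route as the paper's: the paper's proof is precisely the one-line computation $(1\rtimes_{\rho, u}1\rtimes_{\widehat{\rho}}\phi)(x\rtimes_{\lambda}1\rtimes_{\widehat{\lambda}}1^0 )=[\phi_{(1)}\cdot_{\widehat{\lambda}}(x\rtimes_{\lambda}1)]\rtimes_{\widehat{\lambda}}\phi_{(2)}=x\rtimes_{\lambda}1\rtimes_{\widehat{\lambda}}\phi$, resting, exactly as you argue, on the dualized untwisted module-action formulas, on $\widehat{\lambda}(x\rtimes_{\lambda}1)=(x\rtimes_{\lambda}1)\otimes 1_H$ so that $\phi\cdot_{\widehat{\lambda}}(x\rtimes_{\lambda}1)=\epsilon^0 (\phi)(x\rtimes_{\lambda}1)$, and on the counit axiom. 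You merely make explicit the Sweedler bookkeeping that the paper leaves implicit.
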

\begin{proof}
For any $x\in X$, $\phi\in H^0$,
\begin{align*}
(1\rtimes_{\rho, u}1\rtimes_{\widehat{\rho}}\phi)(x\rtimes_{\lambda}1\rtimes_{\widehat{\lambda}}1^0 )
& =[\phi_{(1)}\cdot_{\widehat{\lambda}}(x\rtimes_{\lambda}1)]\rtimes_{\widehat{\lambda}}\phi_{(2)}
=x\rtimes_{\lambda}1\rtimes_{\widehat{\lambda}}\phi \\
& =(x\rtimes_{\lambda}1\rtimes_{\widehat{\lambda}}1^0 )
(1\rtimes_{\sigma, v}1\rtimes_{\widehat{\sigma}}\phi).
\end{align*}
\end{proof}

\begin{lem}\label{lem:onto2}With the above notations, $\Psi_X$ is surjective.
\end{lem}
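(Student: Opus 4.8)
The plan is to prove that the range of $\Psi_X$ is all of $X\rtimes_{\lambda}H\rtimes_{\widehat{\lambda}}H^0$ by using the three preceding lemmas to peel off the $H$- and $H^0$-variables, reducing everything to the single assertion that $x\rtimes_{\lambda}1\rtimes_{\widehat{\lambda}}1^0$ lies in the range for every $x\in X$. I would begin by noting that $\Psi_X (X\otimes M_N (\BC))$ is a sub-bimodule of $X\rtimes_{\lambda}H\rtimes_{\widehat{\lambda}}H^0$ over the algebras $A\rtimes_{\rho, u}H\rtimes_{\widehat{\rho}}H^0$ and $B\rtimes_{\sigma, v}H\rtimes_{\widehat{\sigma}}H^0$: by \cite[Theorem 3.3]{KT2:coaction} the maps $\Psi_A$ and $\Psi_B$ are surjective, so any element of the left (resp.\ right) algebra is $\Psi_A (\sum_{I, J}a_{IJ}\otimes f_{IJ})$ (resp.\ $\Psi_B (\sum_{I, J}b_{IJ}\otimes f_{IJ})$), and Lemma \ref{lem:module}(1),(2) shows that left and right multiplication by such elements preserve the range. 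Since the elements $x\rtimes_{\lambda}h\rtimes_{\widehat{\lambda}}\phi$ span $X\rtimes_{\lambda}H\rtimes_{\widehat{\lambda}}H^0$ linearly, it suffices to place each of these in the range.

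Next I would remove $\phi$ and then $h$. By Lemma \ref{lem:commute}, together with the analogous identity $(x\rtimes_{\lambda}h\rtimes_{\widehat{\lambda}}1^0 )(1\rtimes_{\sigma, v}1\rtimes_{\widehat{\sigma}}\phi)=x\rtimes_{\lambda}h\rtimes_{\widehat{\lambda}}\phi$ coming from the right action of the dual crossed product, the right-module property reduces the problem to the elements $x\rtimes_{\lambda}h\rtimes_{\widehat{\lambda}}1^0$. Applying Lemma \ref{lem:inverse} and transporting it to the double crossed product through the unital embedding $y\mapsto y\rtimes_{\widehat{\lambda}}1^0$ yields
$$
x\rtimes_{\lambda}h\rtimes_{\widehat{\lambda}}1^0=\sum_{i, j, k}(\sqrt{d_k}\rtimes_{\rho, u}w_{ij}^k \rtimes_{\widehat{\rho}}1^0 )^* (x_{ijk}\rtimes_{\lambda}1\rtimes_{\widehat{\lambda}}1^0 ),
$$
where $x_{ijk}=E_1^{\lambda}((\sqrt{d_k}\rtimes_{\rho, u}w_{ij}^k )(x\rtimes_{\lambda}h))\in X$. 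The factors on the left lie in $A\rtimes_{\rho, u}H\rtimes_{\widehat{\rho}}H^0$, so the left-module property further reduces the whole problem to showing $x'\rtimes_{\lambda}1\rtimes_{\widehat{\lambda}}1^0 \in \Psi_X (X\otimes M_N (\BC))$ for every $x'\in X$.

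For this final reduction I would use the relations $\sum_I V_I^{\rho*}V_I^{\rho}=1$ and $\sum_J V_J^{\sigma*}V_J^{\sigma}=1$, which hold because $\Psi_A$ and $\Psi_B$ are unital isomorphisms sending $1\otimes I_N$ to the units of the respective double crossed products. Inserting these on the two sides gives
$$
x'\rtimes_{\lambda}1\rtimes_{\widehat{\lambda}}1^0=\sum_{I, J}V_I^{\rho*}\bigl(V_I^{\rho}(x'\rtimes_{\lambda}1\rtimes_{\widehat{\lambda}}1^0 )V_J^{\sigma*}\bigr)V_J^{\sigma},
$$
and it remains to check that each bracketed term collapses to an element of the form $y_{IJ}\rtimes_{\lambda}1\rtimes_{\widehat{\lambda}}1^0$ with $y_{IJ}\in X$; once this is known, the right-hand side equals $\Psi_X (\sum_{I, J}y_{IJ}\otimes f_{IJ})$ and surjectivity follows. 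I expect this collapse to be the main obstacle: the Haar-trace projections $1\rtimes_{\rho, u}1\rtimes_{\widehat{\rho}}\tau$ and $1\rtimes_{\sigma, v}1\rtimes_{\widehat{\sigma}}\tau$ built into $V_I^{\rho}$ and $V_J^{\sigma*}$ must annihilate the $H$- and $H^0$-degrees of freedom of $x'\rtimes_{\lambda}1\rtimes_{\widehat{\lambda}}1^0$, and verifying this requires the same bookkeeping with the comatrix units $w_{ij}^k$ and the cocycles $\widehat{u},\widehat{v}$ that underlies the algebra-level identities of \cite[Lemma 3.1]{KT2:coaction}.
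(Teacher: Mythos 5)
Your reductions in the first three steps are correct, and they genuinely differ from the paper's route: the paper first invokes Lemma \ref{lem:onto} to factor the whole space as
$(X\rtimes_{\lambda}H\rtimes_{\widehat{\lambda}}1^0 )(1\rtimes_{\sigma, v}1\rtimes_{\widehat{\sigma}}\tau)(B\rtimes_{\sigma, v}H\rtimes_{\widehat{\sigma}}1^0 )$ and then expands both factors by Lemma \ref{lem:inverse} and \cite[Proposition 3.18]{KT1:inclusion}, whereas you bypass Lemma \ref{lem:onto} entirely, using invariance of the range under both module actions together with the partition of unity $\sum_I V_I^{\rho*}V_I^{\rho}=1=\sum_J V_J^{\sigma*}V_J^{\sigma}$ --- which is legitimate, since it is exactly the identity from \cite[Proposition 3.18]{KT1:inclusion} that the paper itself quotes in the proof of Lemma \ref{lem:onto} (note $e_{\tau}^{\rho}:=1\rtimes_{\rho, u}1\rtimes_{\widehat{\rho}}\tau$ is a projection, so $V_I^{\rho*}V_I^{\rho}=(W_I^{\rho*}\rtimes_{\widehat{\rho}}1^0 )e_{\tau}^{\rho}(W_I^{\rho}\rtimes_{\widehat{\rho}}1^0 )$).

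The gap is that you stop precisely at the decisive computation, and the form you assert for it is wrong as stated. One has
$V_I^{\rho}(x'\rtimes_{\lambda}1\rtimes_{\widehat{\lambda}}1^0 )V_J^{\sigma*}
=e_{\tau}^{\rho}(\xi_{IJ}\rtimes_{\widehat{\lambda}}1^0 )e_{\tau}^{\sigma}$ with
$\xi_{IJ}=W_I^{\rho}(x'\rtimes_{\lambda}1)W_J^{\sigma*}\in X\rtimes_{\lambda}H$, and writing $\xi_{IJ}=\sum_m x_m \rtimes_{\lambda}h_m$, the multiplication rules give
\begin{align*}
e_{\tau}^{\rho}(x_m \rtimes_{\lambda}h_m \rtimes_{\widehat{\lambda}}1^0 )e_{\tau}^{\sigma}
& =[\tau_{(1)}\cdot_{\widehat{\lambda}}(x_m \rtimes_{\lambda}h_m )]\rtimes_{\widehat{\lambda}}\tau_{(2)}\tau
=x_m \rtimes_{\lambda}h_{m(1)}\tau(h_{m(2)})\rtimes_{\widehat{\lambda}}\tau \\
& =\tau(h_m )x_m \rtimes_{\lambda}1\rtimes_{\widehat{\lambda}}\tau ,
\end{align*}
using the Haar property $\psi\tau=\psi(1_H )\tau$ in $H^0$ and the invariance $h_{(1)}\tau(h_{(2)})=\tau(h)1_H$. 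So the bracketed term collapses to $E_1^{\lambda}(\xi_{IJ})\rtimes_{\lambda}1\rtimes_{\widehat{\lambda}}\tau$, \emph{not} to $y_{IJ}\rtimes_{\lambda}1\rtimes_{\widehat{\lambda}}1^0$: there is a stray Haar projection in the last slot. Your argument survives only after the additional observation that this $\tau$ is absorbed by the adjacent factor: by Lemma \ref{lem:commute}, $E_1^{\lambda}(\xi_{IJ})\rtimes_{\lambda}1\rtimes_{\widehat{\lambda}}\tau=e_{\tau}^{\rho}(E_1^{\lambda}(\xi_{IJ})\rtimes_{\lambda}1\rtimes_{\widehat{\lambda}}1^0 )$ and $V_I^{\rho*}e_{\tau}^{\rho}=V_I^{\rho*}$, whence $x'\rtimes_{\lambda}1\rtimes_{\widehat{\lambda}}1^0 =\Psi_X (\sum_{I, J}E_1^{\lambda}(\xi_{IJ})\otimes f_{IJ})$. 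Two further remarks: the bookkeeping you fear (explicit cocycles $\widehat{u}, \widehat{v}$ and comatrix units) is not actually needed at this point --- you never need $\xi_{IJ}$ explicitly, only that it lies in $X\rtimes_{\lambda}H$ together with the relation $e_{\tau}^{\rho}(\xi\rtimes_{\widehat{\lambda}}1^0 )e_{\tau}^{\sigma}=E_1^{\lambda}(\xi)\rtimes_{\lambda}1\rtimes_{\widehat{\lambda}}\tau$, the bimodule analogue of the basic-construction relation; but as submitted, the proof is incomplete at its crux, and the key claim as you formulated it is false.
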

\begin{proof}By Lemma \ref{lem:onto}, it suffices to show that for any $b\in B$, $x\in X$, $h, l\in H$, there is an
element $y\in X\otimes M_N (\BC)$ such that
$$
\Psi_X (y)=(x\rtimes_{\lambda}h\rtimes_{\widehat{\lambda}}1^0 )(1\rtimes_{\sigma, v}1\rtimes_{\widehat{\sigma}}\tau)
(b\rtimes_{\sigma, v}l\rtimes_{\widehat{\sigma}}1^0 ) .
$$
By Lemma \ref{lem:inverse} and \cite [Proposition 3.18]{KT1:inclusion}
\begin{align*}
x\rtimes_{\lambda}h & = \sum_I W_I^{\rho*}(E_1^{\lambda}(W_I^{\rho}(x\rtimes_{\lambda}h))\rtimes_{\lambda}1), \\
b\rtimes_{\sigma, v}l & =\sum_I (E_1^{\sigma}((b\rtimes_{\sigma, v}l)W_I^{\sigma*})\rtimes_{\sigma, v}1 )W_I^{\sigma} .
\end{align*}
Thus
\begin{align*}
& (x\rtimes_{\lambda}h\rtimes_{\widehat{\lambda}}1^0 )(1\rtimes_{\sigma, v}1\rtimes_{\widehat{\sigma}}\tau)
(b\rtimes_{\sigma, v}l\rtimes_{\widehat{\sigma}}1^0 ) \\
& =\sum_{I, J}(W_I^{\rho*}\rtimes_{\widehat{\rho}}1^0 )
(E_1^{\lambda}(W_I^{\rho}(x\rtimes_{\lambda}h))\rtimes_{\lambda}1
\rtimes_{\widehat{\lambda}}\tau) \\
& \times (E_1^{\sigma}((b\rtimes_{\sigma, v}l)W_J^{\sigma*})
\rtimes_{\sigma, v}1\rtimes_{\widehat{\sigma}}\tau)
(W_J^{\sigma}\rtimes_{\widehat{\sigma}}1^0 ) .
\end{align*}
Since
$$
E_1^{\lambda}(W_I^{\rho} (x\rtimes_{\lambda}h))\rtimes_{\lambda}1\rtimes_{\widehat{\lambda}}\tau
=(1\rtimes_{\rho, u}1\rtimes_{\widehat{\rho}}\tau)(E_1^{\lambda}(W_I^{\rho} (x\rtimes_{\lambda}h))
\rtimes_{\lambda}1\rtimes_{\widehat{\lambda}}1^0 )
$$
by Lemma \ref{lem:commute},
\begin{align*}
& (x\rtimes_{\lambda}h\rtimes_{\widehat{\lambda}}1^0 )(1\rtimes_{\sigma, v}1\rtimes_{\widehat{\sigma}}\tau)
(b\rtimes_{\sigma, v}l\rtimes_{\widehat{\sigma}}1^0 ) \\
& =\sum_{I, J}V_I^{\rho*}[E_1^{\lambda}(W_I^{\rho}(x\rtimes_{\lambda}h))
E_1^{\sigma}((b\rtimes_{\sigma, v}l)W_J^{\sigma*})
\rtimes_{\lambda}1\rtimes_{\widehat{\lambda}}1^0 ]V_J^{\sigma} .
\end{align*}
Since $E_1^{\lambda}(W_I^{\rho}(x\rtimes_{\lambda}h))E_1^{\sigma}((b\rtimes_{\sigma, v}l)W_J^{\sigma*})\in X$, we obtain the conclusion.
\end{proof}

Let $\widehat{V^{\rho}}$ be a linear map from $H$ to $A\rtimes_{\rho, u}H$ defined by
$\widehat{V^{\rho}}(h)=1\rtimes_{\rho, u}h$ for any $h\in H$. By \cite{KT1:inclusion},
$\widehat{V^{\rho}}$ is a unitary element in $\Hom (H, A\rtimes_{\rho, u}H)$. Let $V^{\rho}$ be
the unitary element in $(A\rtimes_{\rho, u}H)\otimes H^0$ induced by $\widehat{V^{\rho}}$.
Similarly, we also define unitary elements $\widehat{V^{\sigma}}\in \Hom (H, B\rtimes_{\sigma, v}H)$
and $V^{\sigma}\in (B\rtimes_{\sigma, v}H)\otimes H^0$.

\begin{lem}\label{lem:action}With the above notations, for any $x\in X$, $h\in H$,
$$
[h\cdot_{\lambda}x]\rtimes_{\lambda}1 =\widehat{V^{\rho}}(h_{(1)})(x\rtimes_{\lambda}1)\widehat{V^{\sigma*}}(h_{(2)}).
$$
\end{lem}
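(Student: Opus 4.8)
The plan is to reduce the identity to two elementary multiplication formulas in the crossed-product module $X\rtimes_{\lambda}H$ together with the unitarity of $V^{\sigma}$, so that essentially no genuine computation with cocycles is needed. First I would record the auxiliary identity
$$
(z\rtimes_{\lambda}1)\widehat{V^{\sigma}}(g)=z\rtimes_{\lambda}g\qquad(z\in X,\ g\in H),
$$
which follows by applying the right action formula $(x\rtimes_{\lambda}l)(b\rtimes_{\sigma,v}m)=x[l_{(1)}\cdot_{\sigma,v}b]\widehat{v}(l_{(2)},m_{(1)})\rtimes_{\lambda}l_{(3)}m_{(2)}$ with $l=1$, $b=1$, $m=g$. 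Since $\Delta(1_H)=1_H\otimes 1_H$, since $1_H\cdot_{\sigma,v}1=1$, and since the normalization $\widehat{v}(1,g)=\epsilon(g)1$ from Condition $(3)'$ applies, all the auxiliary legs collapse and one is left with $z\epsilon(g_{(1)})\rtimes_{\lambda}g_{(2)}=z\rtimes_{\lambda}g$.

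Next I would compute the left factor. Applying the left action formula $(a\rtimes_{\rho,u}h)(x\rtimes_{\lambda}l)=a[h_{(1)}\cdot_{\lambda}x]\widehat{v}(h_{(2)},l_{(1)})\rtimes_{\lambda}h_{(3)}l_{(2)}$ with $a=1$, $l=1$, together with $\widehat{v}(g,1)=\epsilon(g)1$, gives
$$
\widehat{V^{\rho}}(g)(x\rtimes_{\lambda}1)=(1\rtimes_{\rho,u}g)(x\rtimes_{\lambda}1)=[g_{(1)}\cdot_{\lambda}x]\rtimes_{\lambda}g_{(2)}.
$$
Combining this with the auxiliary identity rewrites the right-hand side as $\widehat{V^{\rho}}(g)(x\rtimes_{\lambda}1)=([g_{(1)}\cdot_{\lambda}x]\rtimes_{\lambda}1)\widehat{V^{\sigma}}(g_{(2)})$.

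Finally I would assemble the two pieces. Taking $g=h_{(1)}$ in the last display (so that, after one use of coassociativity, the antipode argument appearing in the lemma is the third leg $h_{(3)}$), the right-hand side of the statement becomes
$$
\widehat{V^{\rho}}(h_{(1)})(x\rtimes_{\lambda}1)\widehat{V^{\sigma*}}(h_{(2)})=([h_{(1)}\cdot_{\lambda}x]\rtimes_{\lambda}1)\,\widehat{V^{\sigma}}(h_{(2)})\widehat{V^{\sigma*}}(h_{(3)}).
$$
Here I invoke that $\widehat{V^{\sigma}}$ is a unitary in $\Hom(H,B\rtimes_{\sigma,v}H)$, equivalently $V^{\sigma}(V^{\sigma})^{*}=1\otimes 1^{0}$, which in convolution form reads $\widehat{V^{\sigma}}(h_{(2)})\widehat{V^{\sigma*}}(h_{(3)})=\epsilon(h_{(2)})1$. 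Pulling the scalar $\epsilon(h_{(2)})$ into the action and using the counit law $h_{(1)}\epsilon(h_{(2)})=h$ then yields $[h\cdot_{\lambda}x]\rtimes_{\lambda}1$, which is the claim.

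I expect no serious obstacle: the argument is forced by the explicit module-product formulas already established for $X\rtimes_{\lambda}H$. The only points requiring care are the bookkeeping of the Sweedler legs when the unitarity relation is applied—one must split $h_{(1)}$ once before cancelling against $\widehat{V^{\sigma*}}(h_{(2)})$—and the precise form of the unitarity of $V^{\sigma}$, which I would take from the cited inclusion paper rather than reprove here.
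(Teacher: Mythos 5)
Your proposal is correct, but it takes a genuinely different route from the paper's own proof. The paper proves the lemma by a single direct Sweedler computation: it expands $\widehat{V^{\sigma*}}(h_{(2)})$ explicitly as an element of $B\rtimes_{\sigma, v}H$ (via the adjoint formula for $(1\rtimes_{\sigma, v}k)^*$, which brings in terms of the form $\widehat{v^*}(S(\cdot), \cdot)$ and an antipode in the $H$-leg), multiplies the whole product out --- producing legs $h_{(1)},\dots,h_{(8)}$ --- and then collapses everything using the twisted cocycle identity (2)', the normalization (3)' and the antipode axioms $h_{(1)}S(h_{(2)})=\epsilon(h)1$, $S(h_{(1)})h_{(2)}=\epsilon(h)1$. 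You instead factor the computation into the two elementary multiplications $(z\rtimes_{\lambda}1)\widehat{V^{\sigma}}(g)=z\rtimes_{\lambda}g$ and $\widehat{V^{\rho}}(g)(x\rtimes_{\lambda}1)=[g_{(1)}\cdot_{\lambda}x]\rtimes_{\lambda}g_{(2)}$ --- each of which needs only $\Delta(1)=1\otimes 1$, $1_H \cdot_{\sigma, v}b=b$ and the normalization $\widehat{v}(1,g)=\widehat{v}(g,1)=\epsilon(g)1$ from (3)' --- and then cancel $\widehat{V^{\sigma}}(h_{(2)})\widehat{V^{\sigma*}}(h_{(3)})=\epsilon(h_{(2)})1$ using the unitarity of $V^{\sigma}$ in $(B\rtimes_{\sigma, v}H)\otimes H^0$, read in convolution form under the identification with $\Hom(H, B\rtimes_{\sigma, v}H)$. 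This is legitimate within the paper's framework: the paper itself asserts, citing \cite{KT1:inclusion}, that $V^{\sigma}$ is unitary in the paragraph immediately preceding the lemma, so you are entitled to use $V^{\sigma}V^{\sigma *}=1\otimes 1^0$ as a black box; in effect, the eight-leg cocycle manipulation in the paper's proof is an inline re-derivation of precisely that unitarity in the special case at hand. What your route buys is brevity and much lighter Sweedler bookkeeping (one use of coassociativity and one use of the counit law, both of which you flag correctly); what the paper's route buys is self-containedness at the level of the cocycle identities, since it never needs to invoke that $\widehat{V^{\sigma*}}$ is the convolution inverse of $\widehat{V^{\sigma}}$. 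I see no gap in your argument.
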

\begin{proof}This is also immediate by routine computations. Indeed, for any $x\in X$, $h\in H$,
\begin{align*}
& \widehat{V^{\rho}}(h_{(1)})(x\rtimes_{\lambda}1)\widehat{V^{\sigma*}}(h_{(2)}) \\
& =[h_{(1)}\cdot_{\lambda}x][h_{(2)}\cdot_{\sigma, v}\widehat{v^*}(S(h_{(7)}), h_{(8)})]\widehat{v}(h_{(3)}, S(h_{(6)}))
\rtimes_{\lambda}h_{(4)}S(h_{(5)}) \\
& =[h_{(1)}\cdot_{\lambda}x]\widehat{v}(h_{(2)}, S(h_{(5)})h_{(6)})\widehat{v^*}(h_{(3)}S(h_{(4)}), h_{(7)})
\rtimes_{\lambda}1
=[h\cdot_{\lambda}x]\rtimes_{\lambda}1.
\end{align*}
\end{proof}

\begin{thm}\label{thm:duality}{\rm (Cf. Guo and Zhang \cite [Theorem 2.7]{GZ:Kac})}
Let $A, B$ be unital $C^*$-algebras and $H$ a finite dimensional $C^*$-Hopf algebra
with its dual $C^*$-Hopf algebra $H^0$. Then the following hold:
\newline
$(1)$ If $X$ is an $A-B$-equivalence bimodule and
$( A, B, X, \rho, u, \sigma, v, \lambda$, $H^0 )$ is a twisted covariant system,
then there is a linear isomorphism $\Psi_X$ from $X\otimes M_N (\BC)$
onto $X\rtimes_{\lambda}H\rtimes_{\widehat{\lambda}}H^0$ which satisfies Conditions (1)-(4) in
Lemma \ref{lem:module}, where $X\rtimes_{\lambda}H\rtimes_{\widehat{\lambda}}H^0$ is
an $A\rtimes_{\rho, u}H\rtimes_{\widehat{\rho}}H^0 -B\rtimes_{\sigma, v}H\rtimes_{\widehat{\sigma}}H^0$
-equivalence bimodule and $X\otimes M_N (\BC)$ is an exterior tensor product of an $A-B$-equivalence bimodule $X$
and an $M_N (\BC)-M_N (\BC)$-equivalence bimodule $M_N (\BC)$.
Furthermore, there are unitary elements
$U\in (A\rtimes_{\rho, u}H\rtimes_{\widehat{\rho}}H^0 )\otimes H^0$ and
$V\in (B\rtimes_{\sigma, v}H\rtimes_{\widehat{\sigma}}H^0 )\otimes H^0$
such that
$$
U\widehat{\widehat{\lambda}}(x)V^* =((\Psi_X \otimes\id)\circ(\lambda\otimes\id_{M_N (\BC)})\circ\Psi_X^{-1})(x)
$$
for any $x\in X\rtimes_{\lambda}H\rtimes_{\widehat{\lambda}}H^0$.
\newline
$(2)$ If $X$ is a Hilbert $A-B$-bimodule of finite type and
$( A, B, X,  \rho, \sigma, \lambda$, $H^0 )$ is a covariant system, then
there is a linear isomorphism $\Psi_X$ from $X\otimes M_N (\BC)$
onto $X\rtimes_{\lambda}H\rtimes_{\widehat{\lambda}}H^0$ which satisfies Conditions (1)-(4) in
Lemma \ref{lem:module},
where $X\rtimes_{\lambda}H\rtimes_{\widehat{\lambda}}H^0$ is
a Hilbert $A\rtimes_{\rho}H\rtimes_{\widehat{\rho}}H^0 -B\rtimes_{\sigma}H\rtimes_{\widehat{\sigma}}H^0$-bimodule
of finite type and $X\otimes M_N (\BC)$ is an exterior tensor product of a Hilbert $A-B$-bimodule $X$ of finite type
and an $M_N (\BC)-M_N (\BC)$-equivalence bimodule $M_N (\BC)$. Furthermore, there are unitary elements
$U\in (A\rtimes_{\rho}H\rtimes_{\widehat{\rho}}H^0 )\otimes H^0$ and
$V\in (B\rtimes_{\sigma}H\rtimes_{\widehat{\sigma}}H^0 )\otimes H^0$
such that
$$
U\widehat{\widehat{\lambda}}(x)V^* =((\Psi_X \otimes\id)\circ(\lambda\otimes\id_{M_N (\BC)})\circ\Psi_X^{-1})(x)
$$
for any $x\in X\rtimes_{\lambda}H\rtimes_{\widehat{\lambda}}H^0$.
\end{thm}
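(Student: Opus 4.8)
The plan is to assemble the structural facts already established, and then reduce the coaction-intertwining assertion to the algebra-level duality of \cite{KT2:coaction}. For the existence of the linear isomorphism $\Psi_X$ with properties (1)--(4), the map $\Psi_X$ has already been defined explicitly, Lemma \ref{lem:module} shows it satisfies Conditions (1)--(4), injectivity follows from Conditions (3) and (4) (as noted just after Lemma \ref{lem:module}), and Lemma \ref{lem:onto2} gives surjectivity; hence $\Psi_X$ is a linear isomorphism compatible with $\Psi_A$ and $\Psi_B$. To see that the target $X\rtimes_\lambda H\rtimes_{\widehat\lambda}H^0$ is an equivalence bimodule in case (1) (resp.\ of finite type in case (2)), I would iterate Corollary \ref{cor:crossed}: one application to the given system shows $X\rtimes_\lambda H$ is an equivalence bimodule (resp.\ of finite type) over $A\rtimes_{\rho,u}H$ and $B\rtimes_{\sigma,v}H$, and since the dual system $(A\rtimes_{\rho,u}H,\dots,\widehat\lambda,H)$ is again a covariant system (as observed at the end of Section \ref{crossed}), a second application yields the desired structure on $X\rtimes_\lambda H\rtimes_{\widehat\lambda}H^0$.

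Next I would produce the unitaries $U$ and $V$ from the algebra-level duality. By \cite[Theorem 3.3]{KT2:coaction}, the isomorphism $\Psi_A$ intertwines, up to a unitary $U\in (A\rtimes_{\rho,u}H\rtimes_{\widehat\rho}H^0)\otimes H^0$, the double dual coaction $\widehat{\widehat\rho}$ with the transported coaction $(\Psi_A\otimes\id)\circ(\rho\otimes\id)\circ\Psi_A^{-1}$; analogously $\Psi_B$ yields a unitary $V\in(B\rtimes_{\sigma,v}H\rtimes_{\widehat\sigma}H^0)\otimes H^0$ relating $\widehat{\widehat\sigma}$ to $\sigma\otimes\id$. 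These are exactly the candidate unitaries appearing in the statement, and the two algebra-level exterior equivalences they implement are precisely the content of that theorem.

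It then remains to verify the bimodule identity $U\,\widehat{\widehat\lambda}(\xi)\,V^* =((\Psi_X\otimes\id)\circ(\lambda\otimes\id_{M_N(\BC)})\circ\Psi_X^{-1})(\xi)$. Since both sides are linear in $\xi$ and $\Psi_X$ is surjective, it suffices to check the identity on $\xi=\Psi_X(x\otimes f_{IJ})=V_I^{\rho*}(x\rtimes_\lambda 1\rtimes_{\widehat\lambda}1^0)V_J^\sigma$, where $\Psi_X^{-1}$ is explicit and $(\lambda\otimes\id)(x\otimes f_{IJ})=\lambda(x)\otimes f_{IJ}$. On the right-hand side this unwinds through the definition of $\Psi_X$; on the left-hand side I would expand $\widehat{\widehat\lambda}$ through the definition $\widehat\lambda(x\rtimes_\lambda h)=(x\rtimes_\lambda h_{(1)})\otimes h_{(2)}$ and its dual, using Lemma \ref{lem:action} to rewrite $[h\cdot_\lambda x]\rtimes_\lambda 1$ in terms of the unitaries $\widehat{V^\rho},\widehat{V^\sigma}$, and Lemma \ref{lem:module}(1)--(4) to move $\Psi_A$ and $\Psi_B$ past $\Psi_X$. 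The algebra-level identities for $U$ and $V$ then convert left multiplication by $U$ and right multiplication by $V^*$ into the transported coaction, matching the right-hand side.

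The main obstacle I anticipate is this last step: confirming that the single pair $(U,V)$ arising from two separate algebra dualities simultaneously implements the bimodule coaction on both sides. The mechanism that forces this is the equivalence/finite-type bimodule structure --- the module compatibility of $\Psi_X$ with $\Psi_A$ and $\Psi_B$ (Lemma \ref{lem:module}), together with the associativity of the inner products --- which rigidly ties the left action of $U$ to the right action of $V^*$ through $\Psi_X$. Parts (1) and (2) are then handled by the identical computation, the only difference being that in (1) one has full both-sided inner products whereas in (2) one invokes the reconstruction formulas of Lemma \ref{lem:finite}; I would carry out the argument once and remark that it applies verbatim in both cases.
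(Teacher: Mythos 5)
Your proposal follows the paper's proof in essentially all of its structure: $\Psi_X$ is the explicit map of Lemma \ref{lem:module}, injectivity comes from Conditions (1)--(4) of that lemma and surjectivity from Lemma \ref{lem:onto2}; the bimodule structure on $X\rtimes_{\lambda}H\rtimes_{\widehat{\lambda}}H^0$ comes from applying Corollary \ref{cor:crossed} to the dual covariant system; the unitaries $U$ and $V$ are taken from \cite[Theorem 3.3]{KT2:coaction}; and the intertwining identity is reduced by linearity and surjectivity of $\Psi_X$ to elementary tensors $x\otimes f_{IJ}$, where it is verified by expanding $\widehat{\widehat{\lambda}}$ and using Lemma \ref{lem:action}. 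That is exactly the paper's argument, and your outlined computation would go through.

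One claim in your final paragraph is, however, incorrect as stated, and you should not lean on it: the assertion that the module compatibility of $\Psi_X$ with $\Psi_A$, $\Psi_B$ ``rigidly ties'' the left action of $U$ to the right action of $V^*$, so that the two algebra-level exterior equivalences by themselves force the bimodule identity. A (weak) coaction on a bimodule is not determined by the pair of algebra coactions it is compatible with: already for $A=B=\BC$, $X=\BC$ with trivial coactions, $\lambda(x)=xu$ is a compatible weak coaction for every unitary $u\in H^0$ with $\epsilon^0 (u)=1$. Hence the conjugation identities $\Ad(U)\circ\widehat{\widehat{\rho}}=(\Psi_A \otimes\id)\circ(\rho\otimes\id)\circ\Psi_A^{-1}$ and $\Ad(V)\circ\widehat{\widehat{\sigma}}=(\Psi_B \otimes\id)\circ(\sigma\otimes\id)\circ\Psi_B^{-1}$ alone cannot yield $U\widehat{\widehat{\lambda}}(\xi)V^* =((\Psi_X \otimes\id)\circ(\lambda\otimes\id)\circ\Psi_X^{-1})(\xi)$; they do not even determine $U$ and $V$ uniquely. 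What actually closes this step in the paper is the concrete form of the unitaries, $U=\sum_I (V_I^{\rho*}\otimes 1^0 )V^{\rho}\widehat{\widehat{\rho}}(V_I^{\rho})$ and $V=\sum_I (V_I^{\sigma*}\otimes 1^0 )V^{\sigma}\widehat{\widehat{\sigma}}(V_I^{\sigma})$ from \cite[Lemma 3.1]{KT2:coaction}, combined with $\widehat{\widehat{\rho}}(1\rtimes_{\rho, u}1\rtimes_{\widehat{\rho}}\tau)=V^{\rho*}((1\rtimes_{\rho, u}1\rtimes_{\widehat{\rho}}\tau)\otimes 1^0 )V^{\rho}$ and its $\sigma$-analogue from the proof of \cite[Proposition 3.19]{KT1:inclusion}, plus Lemma \ref{lem:commute} and Lemma \ref{lem:action}. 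Since the explicit computation you sketch is precisely this one, the fix is simply to discard the rigidity heuristic and carry the concrete formulas for $U$ and $V$ through the calculation; part (2) is then verbatim, as you say.
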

\begin{proof}
(1) Let $\Psi_X$ be as in Lemma \ref{lem:module}. By Lemmas \ref {lem:module} and \ref{lem:onto2},
we can see that $\Psi_X$
is a linear isomorphism from $X\otimes M_N (\BC)$
onto $X\rtimes_{\lambda}H\rtimes_{\widehat{\lambda}}H^0$. By \cite [Theorem 3.3]{KT2:coaction},
there are $U$ and $V$, unitary elements in
$(A\rtimes_{\rho, u}H\rtimes_{\widehat{\rho}}H^0 )\otimes H^0$ and
$(B\rtimes_{\sigma, v}H\rtimes_{\widehat{\sigma}}H^0 )\otimes H^0$ such that
\begin{align*}
\Ad(U)\circ\widehat{\widehat{\rho}} & =(\Psi_A \otimes\id)\circ(\rho\otimes\id_{M_N (\BC)})\circ\Psi_A^{-1}, \\
\Ad(V)\circ\widehat{\widehat{\sigma}} & =(\Psi_B \otimes\id)\circ(\sigma\otimes\id_{M_N (\BC)})\circ\Psi_B^{-1},
\end{align*}
respectively. Let $V^{\rho}$ and $V^{\sigma}$ be as above.
For any $\sum_{I, J}x_{IJ}\otimes f_{IJ}\in X\otimes M_N (\BC)$,
\begin{align*}
& U\widehat{\widehat{\lambda}}(\Psi_X (\sum_{I, J}x_{IJ}\otimes f_{IJ}))V^* \\
& =\sum_{I, J}(V_I^{\rho*}\otimes1^0 )V^{\rho}\
\widehat{\widehat{\lambda}}((1\rtimes_{\rho, u}1\rtimes_{\widehat{\rho}}\tau)
(x_{IJ}\rtimes_{\lambda}1\rtimes_{\widehat{\lambda}}1^0 )(1\rtimes_{\sigma, v}1\rtimes_{\widehat{\sigma}}\tau)) \\
& \times V^{\sigma*}(V_J^{\sigma}\otimes 1^0 )
\end{align*}
by \cite [Lemma 3.1]{KT2:coaction} since
$$
U=\sum_I (V_I^{\rho*}\otimes 1^0 )V^{\rho}\widehat{\widehat{\rho}}(V_I^{\rho}), \quad
V=\sum_I (V_I^{\sigma*}\otimes 1^0 )V^{\sigma}\widehat{\widehat{\sigma}}(V_I^{\sigma}).
$$
Since
\begin{align*}
\widehat{\widehat{\rho}}(1\rtimes_{\rho, u}1\rtimes_{\widehat{\rho}}\tau)
& =V^{\rho*}((1\rtimes_{\rho, u}1\rtimes_{\widehat{\rho}}\tau)\otimes 1^0 )V^{\rho}, \\
\widehat{\widehat{\sigma}}(1\rtimes_{\sigma, v}1\rtimes_{\widehat{\sigma}}\tau)
& =V^{\sigma*}((1\rtimes_{\sigma, v}1\rtimes_{\widehat{\sigma}}\tau)\otimes 1^0 )V^{\sigma}
\end{align*}
by the proof of \cite [Proposition 3.19]{KT1:inclusion},
\begin{align*}
& U\widehat{\widehat{\lambda}}(\Psi_X (\sum_{I, J}x_{IJ}\otimes f_{IJ}))V^* \\
& =\sum_{I, J}(V_I^{\rho*}\otimes 1^0 )V^{\rho}((x_{IJ}\rtimes_{\lambda}1\rtimes_{\widehat{\lambda}}1^0 )\otimes 1^0 )
V^{\sigma*}(V_J^{\sigma}\otimes 1^0 ) \\
& =\sum_{I, J}(V_I^{\rho*}\otimes 1^0 )\lambda(x_{IJ}\rtimes_{\lambda}1\rtimes_{\widehat{\lambda}}1^0 )
(V_J^{\sigma}\otimes1^0 )
\end{align*}
by Lemma \ref{lem:action}, where we identify $X$ with $X\rtimes_{\lambda}1$ and
$X\rtimes_{\lambda}1\rtimes_{\widehat{\lambda}}1^0$. On the other hand,
$$
((\Psi_X \otimes\id)\circ(\lambda\otimes\id))(x_{IJ}\otimes f_{IJ})
=(\Psi_X \otimes\id)(\lambda(x_{IJ})\otimes f_{IJ}) .
$$
We write that $\lambda(x_{IJ})=\sum_i y_{IJ \, i}\otimes\phi_i$, where
$\phi_i \in H^0 $, $y_{IJ \, i}\in X$ for any $I, J, i$.
Then
\begin{align*}
((\Psi_X \otimes\id)\circ & (\lambda\otimes\id))(x_{IJ}\otimes f_{IJ})
=\sum_{I, J, i}V_I^{\rho*}(y_{I J \, i}\rtimes_{\lambda}1\rtimes_{\widehat{\lambda}}1^0 )V_J^{\sigma}\otimes\phi_i \\
& =\sum_{I, J}(V_I^{\rho*}\otimes 1^0 )\lambda(x_{IJ}\rtimes_{\lambda}\rtimes_{\widehat{\lambda}}1^0 )
(V_J^{\sigma}\otimes 1^0 ).
\end{align*}
Therefore, we obtain the conclusion.
\newline
(2) We can prove (2) in the same way as (1).
\end{proof}

\section{The strong Morita equivalence for coactions and the Rohlin property}
\label{sec:rohlin}
For a unital $C^*$-algebra $A$, we set
\begin{align*}
c_0 (A) & =\{ \, (a_n )\in l^{\infty}(\BN, A) \, | \, \lim_{n\to\infty}||a_n ||=0 \, \}, \\
A^{\infty} & =l^{\infty} (\BN, A)/c_0 (A) .
\end{align*}
We denote an element in $A^{\infty}$ by the symbol $[a_n ]$
for an element $(a_n )\in l^{\infty}(\BN, A)$.
We identify $A$ with the $C^*$-subalgebra of $A^{\infty}$ consisting of the equivalence
classes of constant sequences and set
$$
A_{\infty}=A^{\infty}\cap A'.
$$
Let $X$ be a Hilbert $A-B$- bimodule of finite type, where $B$ is a unital $C^*$-algebra.
We define $X^{\infty}$ in the same way as above. We set 
\begin{align*}
c_0 (X) & =\{ \, (x_n )\in l^{\infty}(\BN, X) \, | \, \lim_{n\to\infty}||x_n ||=0 \, \}, \\
X^{\infty} & =l^{\infty} (\BN, X)/c_0 (X) .
\end{align*}
We denote an element in $X^{\infty}$ by the symbol $[x_n ]$
for an element $(x_n )\in l^{\infty}(\BN, X)$.
We regard $X^{\infty}$ as an $A^{\infty}-B^{\infty}$-bimodule as
follows: For any $[a_n ]\in A^{\infty}$, $[b_n ]\in B^{\infty}$, $[x_n ]\in X^{\infty}$,
$$
[a_n ][x_n ]=[a_n x_n ],  \quad [x_n ][b_n ]=[x_n b_n ] .
$$
Also, we define the left $A^{\infty}$-valued inner product and the right $B^{\infty}$-valued inner product
as follows: For any $[x_n ], [y_n ]\in X^{\infty}$,
$$
{}_{A^{\infty}} \la [x_n ] \, , \, [y_n ] \ra=[{}_A \la x_n \, , \, y_n \ra], \quad
\la [x_n ] \, , \, [y_n ] \ra_{B^{\infty}}=[ \la x_n \, , \, y_n \ra_B ] .
$$
By \cite [Lemma 2.5]{RW:continuous} and easy computations,
the above definitions are well-defined.
We identify $X$ with the Hilbert $A^{\infty}-B^{\infty}$-subbimodule of $X^{\infty}$ consisting of the equivalence
classes of constant sequences.
Also, we can see that $X^{\infty}$ is a complex vector
space satisfying Conditions (1)-(8) in \cite [Lemma 1.3]{KW2:discrete}.
Since $X$ is of finite type, there are finite subsets $\{u_i \}_{i=1}^n$, $\{v_j \}_{j=1}^m \subset X$
such that for any $x\in X$,
$$
\sum_{i=1}^n u_i \la u_i , x \ra_B =x=\sum_{j=1}^m {}_A \la x, v_j \ra v_j .
$$
Then we can regard $u_i $, $v_j \in X$ as elements in $X^{\infty}$ for $i=1, \dots, n$, $j=1, \dots, m$.
Thus $X^{\infty}$ is a Hilbert $A^{\infty}-B^{\infty}$-bimodule of finite type
by \cite [Lemma 1.3]{KW2:discrete}. Furthermore, if $X$ is an $A-B$-equivalence bimodule,
then $X^{\infty}$ is an $A^{\infty}-B^{\infty}$-equivalence bimodule.

\begin{lem}\label{lem:faithful}With the above notations, we suppose that $X$ is an
$A-B$-equivalence bimodule. Let $b\in B^{\infty}$. If $xb=0$ for any $x\in X$, then $b=0$,
where we regard $X$ as the Hilbert $A^{\infty}-B^{\infty}$-subbimodule of $X^{\infty}$.
\end{lem}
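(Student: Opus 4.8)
The plan is to convert the right fullness of $X$ into an explicit expression of $1_B$ as a finite sum of right $B$-valued inner products, and then to use the hypothesis to kill $b$ directly, avoiding any approximation in $B^{\infty}$. The engine is the finite left basis $\{v_j\}_{j=1}^m\subset X$ chosen just before the statement, which satisfies $x=\sum_{j=1}^m {}_A\la x, v_j\ra v_j$ for all $x\in X$. I would first isolate the algebraic identity that I claim makes the whole argument work: $p:=\sum_{j=1}^m \la v_j, v_j\ra_B=1_B$ (note that $p$ is exactly $\lInd[X]$).

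To establish $p=1_B$ I would use the two features of an equivalence bimodule. By the associativity (imprimitivity) condition ${}_A\la x, v_j\ra v_j = x\la v_j, v_j\ra_B$, summing over $j$ and applying the left-basis expansion gives $x=xp$ for every $x\in X$. Then for any $x,y\in X$ one has $\la y, x\ra_B\,p=\la y, xp\ra_B=\la y, x\ra_B$, so $p$ is a right unit for every element of $\la X, X\ra_B$. Since $X$ is full as a right Hilbert $B$-module, the closed linear span of $\{\la y, x\ra_B\mid x,y\in X\}$ is all of $B$, whence $cp=c$ for all $c\in B$; taking $c=1_B$ yields $p=1_B$.

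With $p=1_B$ in hand, I would finish by working inside $X^{\infty}$ and $B^{\infty}$ with $X$, $B$ regarded as constant sequences. Then $b=1_B b=pb=\sum_{j=1}^m \la v_j, v_j\ra_B\,b=\sum_{j=1}^m \la v_j, v_j b\ra_{B^{\infty}}$, using that $\la v_j, v_j\ra_{B^{\infty}}=\la v_j, v_j\ra_B$ for the constant sequence $v_j$ and that $\la v_j, v_j\ra_{B^{\infty}}\,b=\la v_j, v_j b\ra_{B^{\infty}}$ by right-linearity of the $B^{\infty}$-valued inner product. Since each $v_j$ lies in $X$, the hypothesis $xb=0$ for all $x\in X$ forces $v_j b=0$, so every summand vanishes and $b=0$. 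I expect no serious obstacle here; the one genuinely delicate point is bookkeeping rather than difficulty, namely keeping the two embeddings (of $X$ into $X^{\infty}$ and of $B$ into $B^{\infty}$) straight so that the inner-product identities computed in $B$ remain valid after multiplying by $b\in B^{\infty}$, which is legitimate by the well-definedness of the $B^{\infty}$-valued inner product recorded in the excerpt. The equivalence-bimodule hypothesis is essential precisely because it supplies both the associativity condition and the right fullness needed for $p=1_B$, and it is what guarantees that the finitely many vectors $v_j\in X$ already suffice to detect $b$.
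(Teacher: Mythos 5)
Your proof is correct, but it reaches the conclusion by a different mechanism than the paper. The paper lifts $b$ to a representing sequence $(b_m)$, shows $\| \la y , x \ra_B \, b_m \| \le \|y\|\,\|xb_m\| \to 0$ via the Cauchy--Schwarz-type estimate of \cite[Lemma 2.5]{RW:continuous}, and then invokes fullness to assert (without proof) the existence of finitely many $x_i, y_i \in X$ with $\sum_{i=1}^n \la y_i , x_i \ra_B = 1$, from which $\|b_m\| \to 0$. You instead stay entirely inside $B^{\infty}$ and replace both the sequence-lifting and the norm estimates by the algebraic identity $\lInd[X] = \sum_{j=1}^m \la v_j , v_j \ra_B = 1_B$, proved from the left-basis expansion and the imprimitivity relation ${}_A \la x , v_j \ra v_j = x \la v_j , v_j \ra_B$; then $b = 1_B\, b = \sum_j \la v_j , v_j b \ra_{B^{\infty}} = 0$ follows in one line from the hypothesis applied to $x = v_j$. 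What your route buys: it is more self-contained, since the paper's finite family with $\sum_i \la y_i , x_i \ra_B = 1$ itself requires the (tacit) fact that the span of inner products is a dense ideal in the unital algebra $B$ and hence all of $B$, whereas you derive your unit expression explicitly from the given basis $\{v_j\}$; it also re-proves in passing the standard fact that an equivalence bimodule has left index $1_B$, consistent with the notation of Corollary \ref{cor:index}. What the paper's route buys: it uses only fullness and never needs the basis or the equality $\lInd[X]=1_B$, so its estimate-based argument would survive in settings where a finite basis is not available. Your bookkeeping of the two embeddings is handled correctly: the restriction of $\la \cdot , \cdot \ra_{B^{\infty}}$ to constant sequences agrees with $\la \cdot , \cdot \ra_B$ by the paper's definitions, and right $B^{\infty}$-linearity of the inner product is among the module axioms the paper verifies for $X^{\infty}$, so the step $\la v_j , v_j \ra_{B^{\infty}}\, b = \la v_j , v_j b \ra_{B^{\infty}}$ is legitimate.
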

\begin{proof}Since $b\in B^{\infty}$, we write that $b=[b_m ]$, where $b_m \in B$ for any $m\in\BN$.
Since $xb=0$, $||xb_m ||\to 0$ $(m\to \infty)$. For any $y\in X$,
$$
|| \la y , x \ra_B \, b_m ||=|| \la y , xb_m \ra_B || \le ||y||\,||xb_m ||\to 0 \, (m\to\infty)
$$
by  \cite [Lemma 2.5]{RW:continuous}. On the other hand, there are
$x_1, \dots,x_n, y_1,\dots,y_n $ $\in X$ such that $\sum_{i=1}^n \la y_i , x_i \ra_B =1$ since $X$ is full
with the right $B$-valued inner product.
Hence
$$
||b_m ||=||\sum_{i=1}^n \la y_i , x_i  \ra_B \, b_m ||\le\sum_{i=1}^n || \la y_i , x_i \ra_B \, b_m ||\to 0.
$$
Therefore $b=0$.
\end{proof}

We are in position to present the main result in this paper.
Before doing it, we give the definitions of the approximate representability and the Rohlin property for
a coaction of a finite dimensional $C^*$-Hopf algebra on a unital $C^*$-algebra
and a remark on the definitions.

\begin{defin}\label{def:representable}(Cf. \cite [Definitions 4.3 and 5.1]{KT2:coaction})
Let $(\rho, u)$ be a twisted coaction
of a finite dimensional $C^*$-Hopf algebra $H^0$ on a unital $C^*$-algebra $A$. We say that $(\rho, u)$ is
\it approximately representable
\rm
if there is a unitary element $w\in A^{\infty}\otimes H^0$ satisfying the following conditions:
\newline
(1) $\rho(a)=(\Ad(w)\circ\rho_{H^0}^A )(a)$ for any $a\in A$,
\newline
(2) $u=(w\otimes 1^0 )(\rho_{H^0}^{A^{\infty}}\otimes\id)(w)(\id\otimes \Delta^0 )(w^* )$,
\newline
(3) $u=(\rho^{\infty}\otimes\id)(w)(w\otimes 1^0 )(\id\otimes\Delta^0 )(w^* )$.
\newline
Also, we say that
$(\rho, u)$ has
\it the Rohlin property
\rm
if its dual coaction $\widehat{\rho}$ of $H$ on $A\rtimes_{\rho}H$ is approximately representable.
\end{defin}

By \cite [Corollary 6.4]{KT2:coaction}, we can see that a coaction $\rho$ of $H^0$ on $A$ has the Rohlin property
if and only if there is a projection $p\in A_{\infty}$ such that $e\cdot_{\rho^{\infty}} p =\frac{1}{N}$, where
$N=\dim(H)$.

\begin{thm}\label{thm:preserve}Let $H$ be a finite dimensional $C^*$-Hopf algebra with its
dual $C^*$-Hopf algebra $H^0$. Let $\rho$ and $\sigma$ be coactions of $H^0$ on unital C$^*$-algebras $A$ and $B$,
respectively. We suppose that $\rho$ is strongly Morita equivalent to
$\sigma$. Then $\rho$ has the Rohlin property if and only if $\sigma$ has the Rohlin property.
\end{thm}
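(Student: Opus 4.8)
The plan is to reduce to a single implication by symmetry, transport the Rohlin projection across the equivalence bimodule via an isomorphism of central sequence algebras, and check that the defining expectation relation survives the transport. First I would invoke the projection characterization: by \cite[Corollary 6.4]{KT2:coaction}, $\rho$ has the Rohlin property if and only if there is a projection $p\in A_\infty$ with $e\cdot_{\rho^\infty}p=\frac1N$, and likewise for $\sigma$ with a projection in $B_\infty$ (here $N=\dim H$). Since strong Morita equivalence of coactions is an equivalence relation, in particular symmetric (cf.\ Proposition~\ref{prop:relation}), it suffices to treat one direction; so I assume $\rho$ has the Rohlin property, witnessed by $p\in A_\infty$ with $e\cdot_{\rho^\infty}p=\frac1N$, and manufacture the corresponding projection for $\sigma$. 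Fix an $A$--$B$-equivalence bimodule $X$ and a coaction $\lambda$ with $(A,B,X,\rho,\sigma,\lambda,H^0)$ a covariant system, and pass to the equivalence bimodule $X^\infty$ over $A^\infty$ and $B^\infty$ constructed above, on which the induced coactions $\rho^\infty,\sigma^\infty,\lambda^\infty$ and their associated $H$-actions satisfy the covariance conditions $(1)'$--$(4)'$ componentwise.

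Next I would build the comparison isomorphism of central sequence algebras. For $a\in A_\infty$ and $x\in X$ the element $ax\in X^\infty$ should be rewritten as a right multiple: choosing $x_k,y_k\in X$ with $\sum_k\la x_k,y_k\ra_B=1_B$ (fullness), set $\theta(a)=\sum_k\la x_k,ay_k\ra_B\in B^\infty$. Using the imprimitivity identity together with the centrality of $a$ over $A$ one checks that $ax=x\,\theta(a)$ for all $x\in X$; Lemma~\ref{lem:faithful} then shows $\theta(a)$ is the unique element of $B^\infty$ with this property, whence $\theta$ is independent of the chosen quasi-basis, is unital, multiplicative and $*$-preserving, and carries $A_\infty$ onto $B_\infty$ (surjectivity by the symmetric construction). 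In particular $q:=\theta(p)$ is a projection in $B_\infty$, and it satisfies $\la x,y\ra_B\,q=\la x,py\ra_B$ for all $x,y\in X$.

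The heart of the argument is to show that $\theta$ intertwines the canonical conditional expectations $e\cdot_{\rho^\infty}$ and $e\cdot_{\sigma^\infty}$, namely $\theta(e\cdot_{\rho^\infty}a)=e\cdot_{\sigma^\infty}\theta(a)$. Applying the $\lambda^\infty$-action to $ax=x\,\theta(a)$ and invoking covariance conditions $(1)'$ and $(2)'$ yields, for every $h\in H$ and $x\in X$, the convolution identity
\[
\sum [h_{(1)}\cdot_{\rho^\infty}a][h_{(2)}\cdot_\lambda x]=\sum [h_{(1)}\cdot_\lambda x][h_{(2)}\cdot_{\sigma^\infty}\theta(a)].
\]
Specializing $h=e$, I would collapse the coproduct $\Delta(e)$ using the defining properties of the normalized two-sided Haar integral --- $he=\epsilon(h)e=eh$, $S(e)=e=e^*$, and the standard integral exchange relations --- together with the formula $(h\cdot_{\rho^\infty}a)^*=S(h)^*\cdot_{\rho^\infty}a^*$ and the fullness of $X$, so as to extract the pointwise relation $[e\cdot_{\rho^\infty}a]x=x[e\cdot_{\sigma^\infty}\theta(a)]$; by Lemma~\ref{lem:faithful} this is exactly $\theta(e\cdot_{\rho^\infty}a)=e\cdot_{\sigma^\infty}\theta(a)$. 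This is the main obstacle: because $H$ is not cocommutative, $\theta$ does \emph{not} intertwine the $H$-actions for a general $h$ (the clean equivariance available for genuine group actions, where each $h$ acts by a single invertible map, breaks down here), so one is forced to exploit the special coproduct and antipode structure of the integral $e$ to recover the averaged identity.

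Finally, since $\theta$ is unital and $e\cdot_{\rho^\infty}p=\frac1N$, the intertwining gives $e\cdot_{\sigma^\infty}q=\theta(e\cdot_{\rho^\infty}p)=\theta(\tfrac1N 1_A)=\tfrac1N 1_B$. As $q$ is a projection in $B_\infty$, \cite[Corollary 6.4]{KT2:coaction} shows that $\sigma$ has the Rohlin property, and the reverse implication follows by symmetry.
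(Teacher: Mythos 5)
Your proposal is correct and its skeleton coincides with the paper's: both directions reduce to one by symmetry of strong Morita equivalence; both use the characterization of the Rohlin property by a projection $p\in A_{\infty}$ with $e\cdot_{\rho^{\infty}}p=\frac{1}{N}$ from \cite[Corollary 6.4]{KT2:coaction}; both transport $p$ to an element of $B_{\infty}$ of the form $\sum_{k}\la z_k \, , \, pz_k \ra_{B^{\infty}}$; and both close with the same Haar-projection computation and Lemma \ref{lem:faithful}. Where you genuinely diverge is the transport step. The paper follows Rieffel \cite[Proposition 2.1]{Rieffel:rotation}: it amplifies to $E=M_n(A)$, realizes $B\cong qEq$ with $q={}_E\la z \, , \, z \ra$, decomposes $p\otimes I_n=\sum_k {}_{E^{\infty}}\la u_k \, , \, v_k \ra$ explicitly (the relation $(***)$), and sets $p_1=(\pi^{\infty})^{-1}(q(p\otimes I_n)q)=\sum_i \la z_i \, , \, pz_i \ra_{B^{\infty}}$. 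You instead construct the canonical isomorphism $\theta\colon A_{\infty}\to B_{\infty}$ characterized by $ax=x\theta(a)$, which avoids the matrix amplification and the $u_{ki},v_{kj}$ bookkeeping; the two constructions agree, since with the quasi-basis $\{z_i\}$ (note $\la z \, , \, z \ra_B=1$ in Rieffel's construction) one has $\theta(p)=p_1$. Your route also isolates the real content as the convolution identity $[h_{(1)}\cdot_{\rho^{\infty}}a][h_{(2)}\cdot_{\lambda}x]=[h_{(1)}\cdot_{\lambda}x][h_{(2)}\cdot_{\sigma^{\infty}}\theta(a)]$, from which the averaged identity follows by substituting $x\mapsto [S(e_{(1)})\cdot_{\lambda}x]$, $h\mapsto e_{(2)}$ and collapsing $e_{(2)}\otimes e_{(3)}S(e_{(1)})=e\otimes 1$ and $e_{(2)}S(e_{(1)})\otimes e_{(3)}=1\otimes e$; the first collapse is exactly the paper's comatrix-unit computation with $e=\sum_{i,k}\frac{d_k}{N}w_{ii}^k$ and $\sum_i w_{j_1 i}^k S(w_{ij}^k)=\delta_{j_1 j}1$, while the second additionally uses $S^2=\id$ (valid here, as finite dimensional $C^*$-Hopf algebras have involutive antipode), so these should be spelled out rather than only invoking ``standard exchange relations.'' Two small cautions, neither fatal: the intertwining $\theta(e\cdot_{\rho^{\infty}}a)=e\cdot_{\sigma^{\infty}}\theta(a)$ as literally stated presupposes $e\cdot_{\rho^{\infty}}a\in A_{\infty}$, which is not clear for general $a\in A_{\infty}$ --- but your argument only ever uses the pointwise form $[e\cdot_{\rho^{\infty}}a]x=x[e\cdot_{\sigma^{\infty}}\theta(a)]$, and only for $a=p$, where $e\cdot_{\rho^{\infty}}p=\frac{1}{N}$ is scalar; and the surjectivity of $\theta$ is never actually needed, since all you require is that $\theta(p)$ be a projection in $B_{\infty}$.
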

\begin{proof}Since $\rho$ and $\sigma$ are strongly Morita equivalent, there are an
$A-B$-equivalence bimodule $X$ and a coaction $\lambda$ of $H^0$ on $X$ with respect to $(A, B, \rho, \sigma)$.
According to the proof of Rieffel \cite [Proposition 2.1]{Rieffel:rotation}, we obtain the
following: Since $X$ is full with the right $B$-valued inner product,
there are elements $x_1 ,\dots,x_n, \, y_1,\dots,y_n \in X$ such that
$\sum_{i=1}^n \la x_i ,y_i \ra_B =1$. Let $E=A\otimes M_n (\BC)$ and we consider $X^n$ as an
$E-B$-equivalence bimodule in the usual way. Let $x=(x_i )_{i=1}^n$, $y=(y_i )_{i=i}^n \in X^n$.
Let $z={}_E \la y, y \ra^{\frac{1}{2}}x$ and let $q={}_E \la z, z \ra\in E$.
Then $q$ is a projection in $E$. Let $\pi$ be a map from $B$ to $E$ defined by $\pi(b)={}_E \la zb, z \ra$
for any $b\in B$. Then $\pi$ is an isomorphism of $B$ onto $qEq$. We suppose that
$\rho$ has the Rohlin property. Then by \cite [Corollary 6.4]{KT2:coaction} there is a projection $p\in A_{\infty}$
such that $e\cdot_{\rho^{\infty}}p=\frac{1}{N}$. We regard
$(X^{\infty})^n$ as an $E^{\infty}-B^{\infty}$-equivalence bimodule in the usual way.
Since $p\otimes I_n \in E^{\infty}$, there are elements
$$
u_1, \dots , u_m , \, v_1 , \dots ,v_m \in (X^{\infty})^n
$$
such that $p\otimes I_n =\sum_{k=1}^m {}_{E^{\infty}} \la u_k , v_k \ra$.
We write that
$$
u_k =(u_{k1}, \dots, u_{kn}), \quad v_k =(v_{k1}, \dots, v_{kn}),
$$
where $u_{ki}, \,  v_{ki}\in X^{\infty}$ for $k=1,2,\dots,m$, $i=1,2,\dots,n$.
Thus
$$
p\otimes I_n =\sum_{k=1}^m [ {}_{A^{\infty}} \la u_{ki} \, , \, v_{kj} \ra ]_{i,j=1}^n .
$$
Hence
$$
(***) \quad \sum_{k=1}^m {}_{A^{\infty}} \la u_{ki} \, , \, v_{kj} \ra= \begin{cases} p & i=j \\
0 & i\ne j \end{cases}.
$$
We note that since $p\in A_{\infty}$, $q(p\otimes I_n )q=q(p\otimes I_n )\in (qM_n (A)q)^{\infty}\cap (qM_n (A)q)'$.
Let $\pi^{\infty}$ be the isomorphism of $B^{\infty}$ onto $(qM_n (A)q)^{\infty}$
induced by $\pi$. Let $p_1 =(\pi ^{\infty})^{-1}(q(p\otimes I_n )q)$. Then $p_1$ is a projection in $B_{\infty}$
since $\pi(B)=qM_n (A)q$.
We show that $e\cdot_{\sigma^{\infty}}p_1
=\frac{1}{N}$. Since $q={}_E \la z, z \ra$,
\begin{align*}
q(p\otimes I_n )q & =\sum_{k=1}^m {}_{E^{\infty}}\la {}_E \la z, z \ra u_k \, , \, {}_E \la z, z \ra v_k \ra \\
& =\sum_{k=1}^m {}_{E^{\infty}}\la z \la z, u_k \ra_{B^{\infty}}\, \la v_k , z \ra_{B^{\infty}}\, , \, z \ra \\
& =\pi^{\infty}(\sum_{k=1}^m \la z, u_k \ra_{B^{\infty}}\, \la v_k , z \ra_{B^{\infty}}).
\end{align*}
Thus
$$
p_1 =\sum_{k=1}^m \la z, u_k \ra_{B^{\infty}} \, \la v_k , z \ra_{B^{\infty}}
=\sum_{k=1}^m \la z \, , \, {}_{E^{\infty}} \la u_k \, , \, v_k \ra z \ra_{B^{\infty}} .
$$
Since $z\in X^n$, we write $z=(z_i )_{i=1}^n $, where $z_i \in X$ for
$i=1,2,\dots,n$. Hence by Equation ($***$),
\begin{align*}
p_1 & = \la \left[
\begin{array}{ccc}
z_1 \\
\vdots\\
z_n 
\end{array}
\right] \, , \, \sum_{k=1}^m \left[\,{}_{A^{\infty}} \la u_{ki} \, , \, v_{kj} \ra \right]_{i, j=1}^n
\left[
\begin{array}{ccc}
z_1 \\
\vdots\\
z_n 
\end{array}
\right] \ra_{B^{\infty}} \\
& =\sum_{i, j=1}^n \la z_i \, , \, \sum_{k=1}^m {}_{A^{\infty}} \la u_{ki} \, , \, v_{kj} \ra z_j \ra_{B^{\infty}} \\
& =\sum_{i=1}^n \la z_i \, , \, pz_i \ra_{B^{\infty}} .
\end{align*}
For any $w\in X$,
\begin{align*}
w[e\cdot_{\sigma^{\infty}}p_1 ] & =\sum_{i=1}^n w \la [S(e_{(1)}^* )\cdot_{\lambda}z_i ]\, , \, [e_{(2)}
\cdot_{\lambda^{\infty}}pz_i ] \ra_{B^{\infty}} \\
& =\sum_{i=1}^n {}_A \la w\, , \, [S(e_{(1)}^* )\cdot_{\lambda}z_i ] \ra [e_{(2)}\cdot_{\lambda^{\infty}}pz_i ] \\
& =\sum_{i=1}^n {}_A \la [e_{(2)}S(e_{(1)})\cdot_{\lambda}w ] \, , \, [S(e_{(3)}^* )\cdot_{\lambda}z_i ] \ra \,
[e_{(4)}\cdot_{\lambda^{\infty}}pz_i ] \\
& =\sum_{i=1}^n [e_{(2)}\cdot_{\rho} {}_A \la [S(e_{(1)})\cdot_{\lambda}w]\, , \, z_i \ra ]
[e_{(3)}\cdot_{\lambda^{\infty}}pz_i ] \\
& =\sum_{i=1}^n [e_{(2)}\cdot_{\lambda^{\infty}}p[S(e_{(1)})\cdot_{\lambda}w]\la z_i , z_i \ra_B ] \\
& =[e_{(2)}\cdot_{\rho^{\infty}}p][e_{(3)}S(e_{(1)})\cdot_{\lambda}w].
\end{align*}
Since $e=\sum_{i,k}\frac{d_k}{N}w_{ii}^k$,
\begin{align*}
w[e\cdot_{\sigma^{\infty}}p_1 ]& =\sum_{i, j, k, j_1}\frac{d_k}{N}[w_{jj_1}^k \cdot_{\rho^{\infty}}p]
[w_{j_1 i}^k S(w_{ij}^k )\cdot_{\lambda}w ] \\
& =\sum_{j, k}\frac{d_k}{N}[w_{jj}^k \cdot_{\rho^{\infty}}p]w=[e\cdot_{\rho^{\infty}}p]w=\frac{1}{N}w.
\end{align*}
Thus $e\cdot_{\sigma^{\infty}}p_1 =\frac{1}{N}$ by Lemma \ref{lem:faithful}. Therefore we obtain the conclusion
by \cite [Corollary 6.4.]{KT2:coaction}.
\end{proof}

\begin{cor}\label{cor:twisted}Let $(\rho, u)$ and $(\sigma, v)$ be twisted coactions of
$H^0$ on $A$ and $B$, respectively. We suppose that they are strongly Morita equivalent.
Then the following hold:
\newline
$(1)$ The twisted coaction $(\rho, u)$ has the Rohlin property if and only if so does $(\sigma, v)$,
\newline
$(2)$ The twisted coaction $(\rho, u)$ is approximately representable if and only if
so is $(\sigma, v)$.
\end{cor}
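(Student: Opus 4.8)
The plan is to establish (2) first and then deduce (1) from it formally. For the deduction, recall that by definition a twisted coaction $(\rho,u)$ has the Rohlin property exactly when its dual coaction $\widehat{\rho}$, an ordinary coaction of $H$ on $A\rtimes_{\rho,u}H$, is approximately representable. By Corollary \ref{cor:dual}, the assumption that $(\rho,u)$ and $(\sigma,v)$ are strongly Morita equivalent gives that $\widehat{\rho}$ and $\widehat{\sigma}$ are strongly Morita equivalent as well. Thus, once (2) is known for ordinary coactions of $H$, part (1) follows at once by applying (2) to the pair $\widehat{\rho},\widehat{\sigma}$; the ``only if''/``if'' symmetry in both parts is automatic because strong Morita equivalence is symmetric (Proposition \ref{prop:relation}, Corollary \ref{cor:relation}). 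So the real content is the invariance of approximate representability, part (2).

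To prove (2) I would transport the implementing unitary to the sequence algebras. Let $X$ be the $A$--$B$-equivalence bimodule and $\lambda$ the twisted coaction of $H^0$ on $X$ with respect to $(A,B,\rho,u,\sigma,v)$. Passing to sequence algebras, $X^{\infty}$ is an $A^{\infty}$--$B^{\infty}$-equivalence bimodule and $\lambda$ extends to $\lambda^{\infty}$, yielding again a twisted covariant system over $(A^{\infty},B^{\infty},\rho^{\infty},u,\sigma^{\infty},v)$. Assume $(\rho,u)$ is approximately representable, witnessed by a unitary $w\in A^{\infty}\otimes H^0$ as in Definition \ref{def:representable}, and set $\lambda'(\xi)=w^*\lambda^{\infty}(\xi)$ for $\xi\in X^{\infty}$. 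Condition (1) for $w$ gives $\lambda'(a\xi)=(a\otimes 1^0)\lambda'(\xi)$ and $\lambda'(\xi b)=\lambda'(\xi)\sigma^{\infty}(b)$, while the inner-product and counit axioms follow from unitarity of $w$; hence $\lambda'$ realises the trivial coaction $\rho_{H^0}^{A^{\infty}}$ as strongly Morita equivalent to $\sigma^{\infty}$. Since the trivial coaction is inner, Corollary \ref{cor:inner}(i), applied over the sequence algebras, produces a unitary $w'\in B^{\infty}\otimes H^0$ with $\sigma=\Ad(w')\circ\rho_{H^0}^{B}$ and $(\id\otimes\epsilon^0)(w')=1$; this is condition (1) of approximate representability for $(\sigma,v)$.

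It remains to verify the two cocycle identities (2) and (3) for $w'$. Here I would read conditions (1) and (2) for $w$ as saying that $(\rho^{\infty},u)$ is exterior equivalent to the trivial twisted coaction $(\rho_{H^0}^{A^{\infty}},1)$ via $w$, exactly in the sense of Lemma \ref{lem:exterior}, and then push this equivalence across $\lambda'$: the twist-compatibility $(**)$ satisfied by $\lambda$ converts the cocycle identity for $w$ into the corresponding one for $w'$, giving condition (2), and condition (3) is obtained identically from condition (3) for $w$. This is the same computation as in the second half of Lemma \ref{lem:exterior}, now carried out inside $B^{\infty}\otimes H^0\otimes H^0$. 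The well-definedness of $w'$ as an element of $B^{\infty}\otimes H^0$, i.e. the identification $B^{\infty}\cong\End_{A^{\infty}}(X^{\infty})$ together with the faithfulness of the right $B^{\infty}$-action that pins $w'$ down uniquely, is supplied by Lemma \ref{lem:faithful}.

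The main obstacle is precisely this verification of (2) and (3): one must show that the substitution defining $w'$ intertwines both cocycle equations, a Sweedler-notation computation of the same flavour as Lemma \ref{lem:exterior} but appreciably longer, since the two twists $u$ and $v$ and the coaction $\lambda$ all intervene simultaneously and there is no ready-made twisted analogue of Corollary \ref{cor:inner} to quote. An alternative route would be to invoke the biduality ``$(\rho,u)$ is approximately representable if and only if $\widehat{\rho}$ has the Rohlin property'', which follows from the duality of \cite{KT2:coaction}; combined with Corollary \ref{cor:dual} this would reduce (2) directly to Theorem \ref{thm:preserve} applied to $\widehat{\rho},\widehat{\sigma}$, at the cost of first establishing the biduality.
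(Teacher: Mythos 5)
Your reduction of (1) to (2) is sound: by Definition \ref{def:representable} the Rohlin property of $(\rho,u)$ \emph{is}, by definition, approximate representability of the dual coaction $\widehat{\rho}$, and Corollary \ref{cor:dual} transports strong Morita equivalence to the duals, so (1) is just (2) applied to $\widehat{\rho},\widehat{\sigma}$. The gap is in your proof of (2). First, condition (1) of Definition \ref{def:representable} asserts $\rho(a)=\Ad(w)(a\otimes 1^0 )$ only for $a$ in the constant copy $A\subset A^{\infty}$; it does not make $\rho^{\infty}$ inner as a weak coaction of $H^0$ on $A^{\infty}$. Consequently your $\lambda'(\xi)=w^*\lambda^{\infty}(\xi)$ fails the inner-product axiom on $X^{\infty}$: one gets ${}_{A^{\infty}\otimes H^0}\la \lambda'(\xi)\, , \, \lambda'(\eta) \ra = w^* \rho^{\infty}({}_{A^{\infty}}\la \xi , \eta \ra)w$, and this equals ${}_{A^{\infty}}\la \xi, \eta \ra\otimes 1^0$ only when the inner product happens to lie in $A$. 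So $\lambda'$ does not exhibit the trivial coaction $\rho_{H^0}^{A^{\infty}}$ as strongly Morita equivalent to $\sigma^{\infty}$, and Corollary \ref{cor:inner}(i) cannot be quoted ``over the sequence algebras''; you would have to rerun the argument of Lemma \ref{lem:inner} in an asymmetric, asymptotic form where the implementing identity holds only on the constant copies. Second, as you yourself flag, the two cocycle identities (2) and (3) for $w'$ --- the actual content of approximate representability --- are never verified; ``the same computation as in Lemma \ref{lem:exterior}, only longer'' is not a proof, and with $u$, $v$ and the twist condition $(**)$ all intervening simultaneously, this is precisely where the argument could fail. As written, the primary route is incomplete.

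The irony is that your closing fallback is the paper's entire proof, and the ``cost'' you attach to it is zero: the biduality ``$(\rho,u)$ is approximately representable if and only if $\widehat{\rho}$ has the Rohlin property'' is immediate from Definition \ref{def:representable} together with \cite[Proposition 4.6]{KT2:coaction} --- a citation, not something to be established here. The paper proves (2) exactly along those lines: Corollary \ref{cor:dual} gives that $\widehat{\rho}$ and $\widehat{\sigma}$ are strongly Morita equivalent, Theorem \ref{thm:preserve} (which applies verbatim to coactions of $H$, with the roles of $H$ and $H^0$ interchanged) transfers the Rohlin property between them, and the biduality converts back. For (1) the paper argues one level up, using \cite[Proposition 5.5]{KT2:coaction} and the double duals $\widehat{\widehat{\rho}}$, $\widehat{\widehat{\sigma}}$; your deduction of (1) from (2) is an acceptable variant of this. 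You should discard the sequence-algebra construction and promote your last two sentences to the proof.
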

\begin{proof}(1) We suppose that $(\rho, u)$ has the Rohlin property. Then $\widehat{\widehat{\rho}}$ has
the Rohlin property by \cite [Proposition 5.5]{KT2:coaction}. Also, since $(\rho, u)$ and $(\sigma, v)$
are strongly Morita equivalent, $\widehat{\widehat{\rho}}$ and $\widehat{\widehat{\sigma}}$ are
strongly Morita equivalent by Corollary \ref{cor:dual}.
Thus $(\sigma, v)$ has the Rohlin property by Theorem \ref{thm:preserve}
and \cite [Proposition 5.5]{KT2:coaction}.
\newline
(2) We suppose that $(\rho, u)$ is approximately representable. Then $\widehat{\rho}$ has the
Rohlin property by the definition of the Rohlin property and \cite [Proposition 4.6]{KT2:coaction}.
Since $(\rho, u)$ and $(\sigma, v)$ are strongly Morita equivalent,
$\widehat{\rho}$ and $\widehat{\sigma}$ are strongly Morita equivalent by Corollary \ref{cor:dual}.
Thus by Theorem \ref{thm:preserve}, $\widehat{\sigma}$ has the Rohlin property.
Hence by the definition of the Rohlin property and \cite [Proposition 4.6]{KT2:coaction}, $(\sigma, v)$ is
approximately representable.
\end{proof}

\section{Application}\label{sec:application}
Let $A$ and $B$ be unital $C^*$-algebras and $H$ a finite dimensional $C^*$-Hopf algebra
with its dual $C^*$-Hopf algebra $H^0$.
We suppose that $A$ is strongly Morita equivalent to $B$. Let $\rho$ be a coaction of $H^0$ on $A$.
By \cite [Proposition 2.1]{Rieffel:rotation}, there are an $n\in \BN$ and a full projection $q\in M_n (A)$
such that $B$ is isomorphic to $qM_n (A)q$. We identify $B$ with $qM_n (A)q$.
We suppose that $(\rho\otimes\id)(q)\sim q\otimes 1^0$ in $M_n (A)\otimes H^0$. Hence there is a partial isometry
$w\in M_n (A)\otimes H^0$ such that $w^* w =(\rho\otimes\id)(q)$, $ww^* =q\otimes 1^0$.

\begin{lem}\label{lem:partial}With the above notations, there is a partial isometry
$z\in M_n (A)\otimes H^0$ such that $z^* z=(\rho\otimes\id)(q)$, $zz^* =q\otimes 1^0$ and that $\widehat{z}(1)=q$.
\end{lem}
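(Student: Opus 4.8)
The plan is to correct the given partial isometry $w$ by a unitary living in the corner determined by $q$, so that its value at $1_H$ becomes $q$ while the source and range projection conditions are preserved. First I would record the translation of the last requirement: since we regard $H$ as the dual space of $H^0$ and the unit $1_H$ of $H$ coincides with the counit $\epsilon^0$ of $H^0$, the condition $\widehat{z}(1)=q$ is exactly $(\id\otimes\epsilon^0 )(z)=q$. Thus the whole argument is governed by the $*$-homomorphism $\id\otimes\epsilon^0$ from $M_n (A)\otimes H^0$ onto $M_n (A)$.

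Next I would set $z_0 =(\id\otimes\epsilon^0 )(w)=\widehat{w}(1)$ and show that $z_0$ is a unitary in the corner $qM_n (A)q$, that is, $z_0^* z_0 =z_0 z_0^* =q$. Applying $\id\otimes\epsilon^0$ to $ww^* =q\otimes 1^0$ gives $z_0 z_0^* =q$, since $\epsilon^0 (1^0 )=1$; applying it to $w^* w=(\rho\otimes\id)(q)$ gives $z_0^* z_0 =(\id_{M_n (A)}\otimes\epsilon^0 )((\rho\otimes\id)(q))$, and the weak coaction identity $(\id_A \otimes\epsilon^0 )\circ\rho=\id_A$ collapses the right-hand side to $q$. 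Hence $z_0$ is a unitary of $qM_n (A)q$, so in particular $qz_0 =z_0 =z_0 q$ and $qz_0^* =z_0^* =z_0^* q$.

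Then I would define $z=(z_0^* \otimes 1^0 )w$ and verify the three assertions by direct computation. For the range projection, $zz^* =(z_0^* \otimes 1^0 )(ww^* )(z_0 \otimes 1^0 )=(z_0^* qz_0 )\otimes 1^0 =q\otimes 1^0$, using $ww^* =q\otimes 1^0$ and $z_0^* qz_0 =z_0^* z_0 =q$. For the source projection, $z^* z=w^* (z_0 z_0^* \otimes 1^0 )w=w^* (q\otimes 1^0 )w=w^* (ww^* )w=w^* w=(\rho\otimes\id)(q)$, since $w^* w$ is a projection. Finally, because $\id\otimes\epsilon^0$ is multiplicative, $\widehat{z}(1)=(\id\otimes\epsilon^0 )(z)=z_0^* z_0 =q$.

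I expect no genuine obstacle beyond bookkeeping. The one point requiring care is keeping the tensor-factor flip $A\otimes H^0 \otimes M_n (\BC)\cong M_n (A)\otimes H^0$ consistent, so that the weak coaction identity $(\id_A \otimes\epsilon^0 )\circ\rho=\id_A$ may legitimately be applied to $(\rho\otimes\id)(q)$ in the computation of $z_0^* z_0$. Once $z_0=\widehat{w}(1)$ is identified as a unitary in the corner $qM_n (A)q$, the correction $z=(z_0^* \otimes 1^0 )w$ is essentially forced, and the remaining verification is purely formal.
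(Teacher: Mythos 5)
Your proposal is correct and coincides with the paper's own argument: the paper also sets $z=(\widehat{w}(1)^*\otimes 1^0)w$ (your $z_0=\widehat{w}(1)$), obtains $\widehat{w}(1)^*\widehat{w}(1)=\widehat{w}(1)\widehat{w}(1)^*=q$ from applying $\id\otimes\epsilon^0$ to $w^*w=(\rho\otimes\id)(q)$ and $ww^*=q\otimes 1^0$, and verifies the three conditions by the same direct computations. Your write-up merely makes explicit the observation that $z_0$ is a unitary of the corner $qM_n(A)q$ and the role of the counit identity $(\id\otimes\epsilon^0)\circ\rho=\id$, which the paper uses implicitly.
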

\begin{proof}We note that $\widehat{w^* }(1)=\widehat{w}(1)^*$.
Since $w^* w=(\rho\otimes\id)(q)$ and $ww^* =q\otimes 1^0$,
$$
\widehat{w^*}(1)\widehat{w}(1)=(\id\otimes\epsilon^0 )((\rho\otimes\id)(q))=q, \quad
\widehat{w}(1)\widehat{w^*}(1)=q.
$$
Let $z=(\widehat{w^* }(1)\otimes 1^0 )w$. Then $\widehat{z}(1)=\widehat{w^* }(1)\widehat{w}(1)=q$.
Also,
\begin{align*}
z^* z & =w^* (\widehat{w}(1)\otimes 1^0 )(\widehat{w^* }(1)\otimes 1^0 )w=(\rho\otimes\id)(q), \\
zz^* & =(\widehat{w^* }(1)\otimes 1^0 )ww^* (\widehat{w}(1)\otimes 1^0 )=q\otimes 1^0 .
\end{align*}
Therefore, we obtain the conclusion. 
\end{proof}

Let
\begin{align*}
\sigma & =\Ad(z)\circ(\rho\otimes\id_{M_n (\BC)}), \\
u & =(z\otimes 1^0 )(\rho\otimes\id_{M_n (\BC)}\otimes\id_{H^0})(z)(\id_{M_n (A)}\otimes\Delta^0 )(z^* ) .
\end{align*}
We note that $u\in B\otimes H^0 \otimes H^0$.
We shall show that $(\sigma, u)$ is a twisted coaction of $H^0$ on $B$, which is strongly Morita equivalent to
$\rho$. We sometimes identify $A\otimes H^0 \otimes M_n (\BC)$ with $A\otimes M_n (\BC) \otimes H^0$.

\begin{lem}\label{lem:weak}With the above notations, $\sigma$ is a weak coaction of $H^0$ on $B$.
\end{lem}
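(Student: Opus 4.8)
The plan is to verify the three defining conditions for $\sigma=\Ad(z)\circ(\rho\otimes\id_{M_n(\BC)})$ to be a weak coaction: that $\sigma$ is a unital $*$-homomorphism, that it carries $B$ into $B\otimes H^0$, and that it satisfies the counit condition $(\id\otimes\epsilon^0)\circ\sigma=\id_B$. Throughout I would use the relations supplied by Lemma \ref{lem:partial}, namely $z^*z=(\rho\otimes\id)(q)$, $zz^*=q\otimes 1^0$ and $\widehat{z}(1)=q$, together with the partial isometry identities $zz^*z=z$ and $z^*zz^*=z^*$. First I would check the containment $\sigma(B)\subseteq B\otimes H^0$. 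For $b\in B=qM_n(A)q$, writing $q\otimes 1^0=zz^*$, one computes $(q\otimes 1^0)\sigma(b)=zz^*z(\rho\otimes\id)(b)z^*=z(\rho\otimes\id)(b)z^*=\sigma(b)$, and symmetrically $\sigma(b)(q\otimes 1^0)=\sigma(b)$; hence $\sigma(b)\in(q\otimes 1^0)(M_n(A)\otimes H^0)(q\otimes 1^0)=qM_n(A)q\otimes H^0=B\otimes H^0$.

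Next I would verify that $\sigma$ is a unital $*$-homomorphism. Since $\rho\otimes\id$ is a $*$-homomorphism, $*$-preservation is immediate from $\sigma(b)^*=z(\rho\otimes\id)(b^*)z^*=\sigma(b^*)$. The crucial point for multiplicativity is that every $b\in B$ satisfies $bq=b$, so $(\rho\otimes\id)(b)\,z^*z=(\rho\otimes\id)(b)(\rho\otimes\id)(q)=(\rho\otimes\id)(bq)=(\rho\otimes\id)(b)$; this absorbs the middle factor and yields $\sigma(b_1)\sigma(b_2)=z(\rho\otimes\id)(b_1)(\rho\otimes\id)(b_2)z^*=\sigma(b_1b_2)$. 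Unitality is then read off from $\sigma(q)=z(\rho\otimes\id)(q)z^*=zz^*zz^*=q\otimes 1^0$, which is the unit of $B\otimes H^0$.

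Finally I would establish the counit condition. Because $\epsilon^0$ is a $*$-homomorphism, the slice map $\id\otimes\epsilon^0$ is a $*$-homomorphism on $M_n(A)\otimes H^0$, so it distributes over the product $\sigma(b)=z(\rho\otimes\id)(b)z^*$. Regarding $H$ as the dual of $H^0$, the unit $1$ of $H$ is $\epsilon^0$, so $(\id\otimes\epsilon^0)(z)=\widehat{z}(1)=q$ by Lemma \ref{lem:partial}, and consequently $(\id\otimes\epsilon^0)(z^*)=\widehat{z}(1)^*=q$ as well; moreover the counit condition for the weak coaction $\rho$ gives $(\id\otimes\epsilon^0)((\rho\otimes\id)(b))=b$. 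Combining these, $(\id\otimes\epsilon^0)(\sigma(b))=q\,b\,q=b$ for every $b\in B$. I expect the main obstacle to be bookkeeping with the corner relations: the whole argument hinges on recognizing that $z^*z=(\rho\otimes\id)(q)$ is exactly the image of the unit $q$ of $B$, which simultaneously drives the containment $\sigma(B)\subseteq B\otimes H^0$ and the multiplicativity, and on correctly identifying the slice $(\id\otimes\epsilon^0)(z)$ with the evaluation $\widehat{z}(1)=q$ furnished by Lemma \ref{lem:partial}.
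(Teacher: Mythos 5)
Your proof is correct and follows essentially the same route as the paper's: the containment $\sigma(B)\subseteq B\otimes H^0$ via the corner relation $zz^*=q\otimes 1^0$, multiplicativity by absorbing $z^*z=(\rho\otimes\id)(q)$ into $(\rho\otimes\id)(b)$, and the counit condition via $\widehat{z}(1)=(\id\otimes\epsilon^0)(z)=q$. The only difference is that you spell out the homomorphism property and $\sigma(q)=q\otimes 1^0$ explicitly, which the paper dismisses as ``routine computations.''
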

\begin{proof}For any $x\in M_n (A)$,
$$
\sigma(qxq) =z(\rho\otimes\id)(qxq)z^*
=(q\otimes 1^0 )z(\rho\otimes\id)(x)z^* (q\otimes1^0 ) .
$$
Hence $\sigma$ is a map from $B$ to $B\otimes H^0$. Also, by routine computations,
we can see that $\sigma$ is a homomorphism of $B$ to
$B\otimes H^0$ with $\sigma(q)=q\otimes 1^0$. Furthermore, since $\widehat{z}(1)=q$, for any $x\in M_n (A)$,
\begin{align*}
(\id\otimes\epsilon^0 )(\sigma(qxq)) & =(\id\otimes\epsilon^0 )((q\otimes 1^0 )z(\rho\otimes\id)(x)z^* (q\otimes 1^0 )) \\
& =q\widehat{z}(1)(\id\otimes\epsilon^0 )((\rho\otimes\id)(x))\widehat{z^* }(1)q=qxq.
\end{align*}
Thus $\sigma$ is a weak coaction of $H^0$ on $B$.
\end{proof}

\begin{lem}\label{lem:twisted}With the above notations, $(\sigma, u)$ is a twisted coaction
of $H^0$ on $B$.
\end{lem}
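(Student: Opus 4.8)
The plan is to verify directly the three defining conditions of a twisted coaction listed immediately before Definition~\ref{Def:twisted}, since Lemma~\ref{lem:weak} already gives that $\sigma$ is a weak coaction of $H^0$ on $B=qM_n(A)q$, and it has been observed that $u\in B\otimes H^0\otimes H^0$. Throughout I write $\rho'=\rho\otimes\id_{M_n(\BC)}$, a genuine coaction of $H^0$ on $M_n(A)$, so that $(\rho'\otimes\id)\circ\rho'=(\id\otimes\Delta^0)\circ\rho'$ and $(\id\otimes\epsilon^0)\circ\rho'=\id$. From Lemma~\ref{lem:partial} the partial isometry $z$ satisfies $z^*z=\rho'(q)$, $zz^*=q\otimes 1^0$, whence also $z\rho'(q)=z$, $\rho'(q)z^*=z^*$, and $(\id\otimes\epsilon^0)(z)=\widehat{z}(1)=q$.

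First I would check that $u$ is unitary in $B\otimes H^0\otimes H^0$. Substituting $u=(z\otimes 1^0)(\rho'\otimes\id)(z)(\id\otimes\Delta^0)(z^*)$ into $uu^*$, the middle factor $(\id\otimes\Delta^0)(z^*z)=(\id\otimes\Delta^0)(\rho'(q))=(\rho'\otimes\id)(\rho'(q))$ is rewritten by the coaction identity, and then $z\rho'(q)=z$ together with $(\rho'\otimes\id)(zz^*)=\rho'(q)\otimes 1^0$ absorb the intermediate projections, so that $uu^*$ collapses to $z\rho'(q)z^*\otimes 1^0=q\otimes 1^0\otimes 1^0=1_{B\otimes H^0\otimes H^0}$; the computation of $u^*u$ is symmetric, ending with $(\id\otimes\Delta^0)(zz^*)=q\otimes 1^0\otimes 1^0$.

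Next, for condition (1), $(\sigma\otimes\id)\circ\sigma=\Ad(u)\circ(\id\otimes\Delta^0)\circ\sigma$, the key observation is that for $c\in B\otimes H^0$ one has $(\sigma\otimes\id)(c)=(z\otimes 1^0)(\rho'\otimes\id)(c)(z^*\otimes 1^0)$, which is immediate on elementary tensors. Applying this to $c=\sigma(b)=z\rho'(b)z^*$ and invoking $(\rho'\otimes\id)\circ\rho'=(\id\otimes\Delta^0)\circ\rho'$ turns the left-hand side into $(z\otimes 1^0)(\rho'\otimes\id)(z)(\id\otimes\Delta^0)(\rho'(b))(\rho'\otimes\id)(z^*)(z^*\otimes 1^0)$; on the other side, the identity $u(\id\otimes\Delta^0)(z)=(z\otimes 1^0)(\rho'\otimes\id)(z)$ (a consequence of $z\rho'(q)=z$) and its adjoint $(\id\otimes\Delta^0)(z^*)u^*=(\rho'\otimes\id)(z^*)(z^*\otimes 1^0)$ reduce $\Ad(u)\circ(\id\otimes\Delta^0)(\sigma(b))$ to the very same expression. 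Condition (3) is shorter: since $\id\otimes\epsilon^0\otimes\id$ and $\id\otimes\id\otimes\epsilon^0$ are algebra homomorphisms on $M_n(A)\otimes H^0\otimes H^0$, I distribute them across the three factors of $u$ and use $(\id\otimes\epsilon^0)(z)=q$, $(\id\otimes\epsilon^0)\circ\rho'=\id$, and the counit axiom $(\epsilon^0\otimes\id)\circ\Delta^0=\id=(\id\otimes\epsilon^0)\circ\Delta^0$; each evaluation yields $q\otimes 1^0=1_{B\otimes H^0}$, which is precisely condition (3).

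The main obstacle is condition (2), the $2$-cocycle identity $(u\otimes 1^0)(\id\otimes\Delta^0\otimes\id)(u)=(\sigma\otimes\id\otimes\id)(u)(\id\otimes\id\otimes\Delta^0)(u)$. Here I would substitute the formula for $u$ into both sides and reduce each to a common normal form. The admissible tools are coassociativity $(\Delta^0\otimes\id)\circ\Delta^0=(\id\otimes\Delta^0)\circ\Delta^0$, the coaction identity for $\rho'$, and the relation $(\sigma\otimes\id\otimes\id)(u)=(z\otimes 1^0\otimes 1^0)(\rho'\otimes\id\otimes\id)(u)(z^*\otimes 1^0\otimes 1^0)$ obtained exactly as in condition (1). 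The difficulty is purely one of bookkeeping: one must track the several $H^0$-legs produced by the iterated comultiplications while repeatedly invoking $z\rho'(q)=z$ and $\rho'(q)z^*=z^*$ to cancel the intermediate copies of $\rho'(q)$, after which the two sides agree. This step is by far the most laborious, although conceptually it is forced, since $\rho'$ is a genuine coaction $u$ is the \emph{coboundary} of $z$ and hence automatically a cocycle.
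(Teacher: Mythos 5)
Your proposal is correct and follows essentially the same route as the paper's proof: both verify unitarity of $u$ and the three twisted-coaction conditions directly from the relations $z^*z=(\rho\otimes\id)(q)$, $zz^*=q\otimes 1^0$, $\widehat{z}(1)=q$, together with the coaction identity $((\rho\otimes\id)\otimes\id_{H^0})\circ(\rho\otimes\id)=(\id\otimes\Delta^0)\circ(\rho\otimes\id)$ and the interchange of $\rho\otimes\id$ with comultiplications on disjoint legs. Your compact identity $u(\id\otimes\Delta^0)(z)=(z\otimes 1^0)((\rho\otimes\id)\otimes\id_{H^0})(z)$ and the coboundary viewpoint merely streamline the same bookkeeping that the paper carries out explicitly for the cocycle condition (2).
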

\begin{proof}By routine computations, we can see that $uu^* =u^* u=q\otimes 1^0 \otimes 1^0$. Thus
$u$ is a unitary element in $B\otimes H^0 \otimes H^0$. For any $x\in M_n (A)$,
\begin{align*}
& ((\sigma\otimes\id_{H^0})\circ\sigma)(qxq) \\
& =(z\otimes 1^0 )(\rho\otimes\id\otimes\id_{H^0})(z)((\rho\otimes\id\otimes\id_{H^0})\circ(\rho\otimes\id))(qxq) \\
& \times (\rho\otimes\id\otimes\id_{H^0})(z^* )(z^* \otimes 1^0 ).
\end{align*}
On the other hand,
\begin{align*}
& (\Ad(u)\circ(\id\otimes\Delta^0 )\circ\sigma)(qxq) \\
& =(z\otimes 1^0 )(\rho\otimes\id\otimes\id_{H^0})(z)((\id\otimes\Delta^0 )\circ(\rho\otimes\id))(qxq) \\
& \times (\rho\otimes\id\otimes\id_{H^0})(z^* )(z^* \otimes 1^0 ).
\end{align*}
Since $(\rho\otimes\id\otimes\id_{H^0})\circ(\rho\otimes\id)=(\id\otimes\Delta^0 )\circ(\rho\otimes\id)$,
we obtain that
$$
(\sigma\otimes\id_{H^0} )\circ\sigma=\Ad(u)\circ(\id\otimes\Delta^0 )\circ\sigma.
$$
Also,
\begin{align*}
& (u\otimes 1^0 )(\id\otimes\Delta^0 \otimes\id_{H^0})(u) \\
& =(z\otimes 1^0 \otimes 1^0 )(\rho\otimes\id\otimes\id_{H^0}\otimes\id_{H^0})(z\otimes 1^0 ) \\
& \times (\id\otimes\Delta^0 \otimes\id_{H^0})((\rho\otimes\id\otimes\id_{H^0})(z)(\id\otimes\Delta^0 )(z^* )) .
\end{align*}
On the other hand, since $(\rho\otimes\id\otimes\id_{H^0})\circ(\rho\otimes\id)=(\id\otimes\Delta^0 )\circ(\rho\otimes\id)$,
\begin{align*}
& (\sigma\otimes\id_{H^0}\otimes\id_{H^0})(u)(\id\otimes\id_{H^0}\otimes\Delta^0 )(u) \\
& =(z\otimes 1^0 \otimes1^0 )(\rho\otimes\id\otimes\id_{H^0}\otimes\id_{H^0})(z\otimes 1^0 ) \\
& \times (\id\otimes\Delta^0 \otimes\id_{H^0})((\rho\otimes\id\otimes\id_{H^0})(z)) \\
& \times (\rho\otimes\id\otimes\id_{H^0}\otimes\id_{H^0})((\id\otimes\Delta^0 )(z^* )) \\
& \times(\id\otimes\id_{H^0}\otimes\Delta^0 )((\rho\otimes\id\otimes\id_{H^0})(z))
(\id\otimes\Delta^0 \otimes\id_{H^0})((\id\otimes\Delta^0 )(z^* )).
\end{align*}
We can see that
$$
(\rho\otimes\id\otimes\id_{H^0}\otimes\id_{H^0})\circ(\id\otimes\Delta^0 )
=(\id\otimes\id_{H^0}\otimes\Delta^0 )\circ(\rho\otimes\id\otimes\id_{H^0})
$$
by easy computations. Furthermore, we note that
\begin{align*}
& (\id\otimes\id_{H^0}\otimes\Delta^0 )\circ(\id\otimes\Delta^0 )\circ(\rho\otimes\id) \\
& =(\id\otimes\Delta^0 \otimes\id_{H^0})\circ(\id\otimes\Delta^0 )\circ(\rho\otimes\id) \\
& =(\id\otimes\Delta^0 \otimes\id_{H^0})\circ(\rho\otimes\id\otimes\id_{H^0})\circ(\rho\otimes\id).
\end{align*}
Thus since
\begin{align*}
& (\id\otimes\id_{H^0}\otimes\Delta^0 )((\rho\otimes\id\otimes\id_{H^0})((\rho\otimes\id)(q))) \\
& =(\id\otimes\id_{H^0}\otimes\Delta^0 )((\id\otimes\Delta^0 )((\rho\otimes\id)(q))) ,
\end{align*}
\begin{align*}
& (\sigma\otimes\id_{H^0}\otimes\id_{H^0})(u)(\id\otimes\id_{H^0}\otimes\Delta^0 )(u) \\
& =(z\otimes 1^0 \otimes1^0 )(\rho\otimes\id\otimes\id_{H^0}\otimes\id_{H^0})(z\otimes 1^0 ) \\
& \times (\id\otimes\Delta^0 \otimes\id_{H^0})((\rho\otimes\id\otimes\id_{H^0})(z)) \\
& \times(\id\otimes\id_{H^0}\otimes\Delta^0 )((\rho\otimes\id\otimes\id_{H^0})((\rho\otimes\id)(q))) \\
& \times (\id\otimes\Delta^0 \otimes\id_{H^0})((\id\otimes\Delta^0 )(z^* )) \\
& =(z\otimes 1^0 \otimes 1^0 )(\rho\otimes\id\otimes\id_{H^0}\otimes\id_{H^0})(z\otimes 1^0 ) \\
& \times (\id\otimes\Delta^0 \otimes\id_{H^0})((\rho\otimes\id\otimes\id_{H^0})(z)(\id\otimes\Delta^0 )(z^* )) .
\end{align*}
Hence we obtain that
$$
(u\otimes 1^0 )(\id\otimes\Delta^0 \otimes\id_{H^0})(u)
=(\sigma\otimes\id_{H^0}\otimes\id_{H^0})(u)(\id\otimes\id_{H^0}\otimes\Delta^0 )(u).
$$
Furthermore, since $\widehat{z}(1)=q$, for any $h\in H$,
\begin{align*}
(\id\otimes h\otimes\epsilon^0 )(u) & =\widehat{z}(h_{(1)})[h_{(2)}\cdot_{\rho\otimes\id}q]\widehat{z^* }(h_{(3)})
=(\id\otimes h)(\sigma(q))=\epsilon(h)q , \\
(\id\otimes\epsilon^0 \otimes h)(u) & =\widehat{z}(1)[1\cdot_{\rho\otimes\id}\widehat{z}(h_{(1)})]\widehat{z^*}(h_{(2)})
=\widehat{z}(1)\epsilon(h)=\epsilon(h)q.
\end{align*}
Therefore, $(\sigma, u)$ is a twisted coaction of $H^0$ on $B$.
\end{proof}

Let $f$ be a minimal projection in $M_n (\BC)$ and let $p$ be a full projection in $M_n (A)$
defined by $p=1_A \otimes f$. Let $X=pM_n (A)q$. We regard $X$ as an $A-B$-equivalence bimodule
in the usual way, where we identify $A$ and $B$ with $pM_n (A)p$ and $qM_n (A)q$, respectively.
Then we can regard $X$ as a set $\{[a_1 , \dots , a_n ]q \, | \, a_i \in A, \, i=1, \dots,n \}$.
Let $\lambda$ be a linear map from $X$ to $X\otimes H^0$ defined by
\begin{align*}
\lambda([a_1 , \dots , a_n ]q) & =[\rho(a_1 ), \dots , \rho(a_n )](\rho\otimes\id)(q)z^* \\
& =[\rho(a_1 ), \dots , \rho(a_n )]z^* (q\otimes 1^0 )
\end{align*}
for any $[a_1 , \dots, a_n ]q\in X$.

\begin{lem}\label{lem:coaction}With the above notations, $\lambda$ is a twisted coaction of $H^0$ on $X$
with respect to $(A, B, \rho, \sigma, u)$.
\end{lem}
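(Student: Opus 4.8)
The plan is to reduce everything to one clean formula for $\lambda$ and then check the six defining conditions almost mechanically. Writing $\widetilde{\rho}=\rho\otimes\id_{M_n(\BC)}$ and viewing $X=pM_n(A)q$ and $z^*$ inside $M_n(A)\otimes H^0$, the first step is to observe that the displayed definition of $\lambda$ is exactly
$$
\lambda(x)=\widetilde{\rho}(x)\,z^* \qquad (x\in X),
$$
since $[\rho(a_1),\dots,\rho(a_n)]=\widetilde{\rho}([a_1,\dots,a_n])$ and $(\rho\otimes\id)(q)=\widetilde{\rho}(q)$. Using $\widetilde{\rho}(p)=p\otimes 1^0$, $z^*=z^*(q\otimes 1^0)$ and $(\rho\otimes\id)(q)z^*=z^*zz^*=z^*$, one checks that $\widetilde{\rho}(x)z^*$ lies in $(p\otimes1^0)(M_n(A)\otimes H^0)(q\otimes1^0)=X\otimes H^0$, so $\lambda$ is well defined. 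Throughout I will use that, under the identifications $A=pM_n(A)p$ and $B=qM_n(A)q$, the coaction on $A$ is $\widetilde{\rho}|_A$, the coaction $\sigma$ is $\Ad(z)\circ\widetilde{\rho}$, and the bimodule inner products on $pM_n(A)q$ are ${}_A\la x,y\ra=xy^*$ and $\la x,y\ra_B=x^*y$, together with the coaction identities $(\widetilde{\rho}\otimes\id)\circ\widetilde{\rho}=(\id\otimes\Delta^0)\circ\widetilde{\rho}$ and $(\id\otimes\epsilon^0)\circ\widetilde{\rho}=\id$.

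With this formula the five weak-covariance conditions are short. Condition (1) is $\lambda(ax)=\widetilde{\rho}(a)\widetilde{\rho}(x)z^*=\rho(a)\lambda(x)$ by multiplicativity of $\widetilde{\rho}$. For condition (2) I will show $\widetilde{\rho}(b)z^*=z^*\sigma(b)$ for $b\in B$: indeed $z^*\sigma(b)=z^*z\,\widetilde{\rho}(b)z^*=(\rho\otimes\id)(q)\widetilde{\rho}(b)z^*=\widetilde{\rho}(b)z^*$, the last step since $(\rho\otimes\id)(q)$ is a left unit for $\widetilde{\rho}(b)$. Conditions (3) and (4) follow from $z^*z=(\rho\otimes\id)(q)$ and $\sigma=\Ad(z)\circ\widetilde{\rho}$: on the one hand $\lambda(x)\lambda(y)^*=\widetilde{\rho}(x)z^*z\,\widetilde{\rho}(y)^*=\widetilde{\rho}(xy^*)=\rho({}_A\la x,y\ra)$, and on the other $\lambda(x)^*\lambda(y)=z\,\widetilde{\rho}(x)^*\widetilde{\rho}(y)z^*=\sigma(x^*y)=\sigma(\la x,y\ra_B)$. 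Finally condition (5) uses that $\id\otimes\epsilon^0$ is an algebra homomorphism together with $\widehat{z^*}(1)=\widehat{z}(1)^*=q$, giving $(\id\otimes\epsilon^0)(\lambda(x))=x\,q=x$.

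The main point is the twist condition $(**)$, which here reads $(\lambda\otimes\id)(\lambda(x))=(\id\otimes\Delta^0)(\lambda(x))u^*$ (the left twist is trivial since $\rho$ is a genuine coaction). The first and most delicate step is to compute the left-hand side: writing $\lambda(x)=\sum_k x_k\otimes\phi_k$ in $X\otimes H^0$ and applying $\lambda\otimes\id$ slotwise gives $\sum_k\widetilde{\rho}(x_k)z^*\otimes\phi_k$, which is precisely $(\widetilde{\rho}\otimes\id)(\lambda(x))(z^*\otimes 1^0)$; one must track the three $H^0$-legs carefully, since $\lambda$ is only a bimodule map, not an algebra map. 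The coaction identity then rewrites this as
$$
(\lambda\otimes\id)(\lambda(x))=(\id\otimes\Delta^0)(\widetilde{\rho}(x))\,(\widetilde{\rho}\otimes\id)(z^*)\,(z^*\otimes 1^0).
$$
Since $(\id\otimes\Delta^0)(\lambda(x))u^*=(\id\otimes\Delta^0)(\widetilde{\rho}(x))\,(\id\otimes\Delta^0)(z^*)u^*$, it suffices to prove the single $z$-identity
$$
(\widetilde{\rho}\otimes\id)(z^*)\,(z^*\otimes 1^0)=(\id\otimes\Delta^0)(z^*)\,u^*.
$$
This is where the definition $u=(z\otimes1^0)(\widetilde{\rho}\otimes\id)(z)(\id\otimes\Delta^0)(z^*)$ enters: substituting $u^*$ and using $z^*z=(\rho\otimes\id)(q)=\widetilde{\rho}(q)$ together with the coaction identity to rewrite $(\id\otimes\Delta^0)(z^*z)$ as $(\widetilde{\rho}\otimes\id)(z^*z)$, the right-hand side collapses, via the partial-isometry relation $z^*zz^*=z^*$, to $(\widetilde{\rho}\otimes\id)(z^*)(z^*\otimes1^0)$, which is the left-hand side. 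I expect this last reduction, rather than any conceptual difficulty, to be the main obstacle, since it requires organizing several applications of the partial-isometry relations and of the coaction identity across the two $H^0$-legs without error.
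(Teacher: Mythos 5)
Your proposal is correct and takes essentially the same route as the paper: the paper's proof also rests on the formula $\lambda(x)=(\rho\otimes\id_{M_n(\BC)})(x)z^*$, dismisses the weak-coaction conditions as routine, and verifies the twist condition $(\lambda\otimes\id)(\lambda(x))=(\id\otimes\Delta^0)(\lambda(x))u^*$ by the coaction identity $(\rho\otimes\id)\circ\rho=(\id\otimes\Delta^0)\circ\rho$ together with the partial-isometry relations for $z$ — your isolated $z$-identity $(\widetilde{\rho}\otimes\id)(z^*)(z^*\otimes 1^0)=(\id\otimes\Delta^0)(z^*)u^*$ is exactly what the paper's two displayed computations encode. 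You simply spell out the weak conditions and the leg-tracking that the paper leaves implicit.
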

\begin{proof}By routine computations, we can see that $\lambda$ is a weak coaction of $H^0$
on $X$ with respect to $(A, B, \rho, \sigma, u)$. For any $[a_1 , \dots, a_n ]q\in X$,
\begin{align*}
& ((\lambda\otimes\id_{H^0})\circ\lambda)([a_1, \dots, a_n ]q) \\
& =[((\rho\otimes\id_{H^0})\circ\rho)(a_1 ), \dots, ((\rho\otimes\id_{H^0})\circ\rho)(a_n )] \\
& \times (\rho\otimes\id\otimes\id_{H^0})(z^*)(z^* \otimes 1^0 ). \\
\end{align*}
On the other hand, since $(\rho\otimes\id_{H^0})\circ\rho=(\id\otimes\Delta^0 )\circ\rho$,
\begin{align*}
& ((\id\otimes\Delta^0 )\circ\lambda)([a_1 , \dots, a_n ]q)u^* \\
& =[((\id\otimes\Delta^0 )\circ\rho)(a_1 ), \dots, ((\id\otimes\Delta^0 )\circ\rho)(a_n )] \\
& \times (\rho\otimes\id\otimes\id_{H^0})(z^* )(z^* \otimes 1^0) .
\end{align*}
Hence for any $[a_1 , \dots, a_n ]q\in X$,
$$
((\lambda\otimes\id_{H^0})\circ\lambda)([a_1, \dots, a_n ]q)
=((\id\otimes\Delta^0 )\circ\lambda)([a_1 , \dots, a_n ]q)u^* .
$$
Thus $\lambda$ is a twisted coaction of $H^0$ on $X$
with respect to $(A, B, \rho, \sigma, u)$.
\end{proof}

\begin{thm}\label{thm:appli}Let $A$ be a unital $C^*$-algebra and $H$ a finite dimensional
$C^*$-Hopf algebra with its dual $C^*$-Hopf algebra $H^0$. Let $\rho$ be a coaction of $H^0$ on $A$
with the Rohlin property. Let $q$ be a full projection in a $C^*$-algebra $M_n (A)$ such that
$$
(\rho\otimes\id_{M_n (\BC)})(q)\sim q\otimes 1^0
$$
in $M_n (A)\otimes H^0$. Let $B=qM_n (A)q$. Then there is a coaction of $H^0$ on $B$ with
the Rohlin property.
\end{thm}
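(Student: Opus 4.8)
The plan is to read the theorem as the payoff of the construction already carried out in Lemmas~\ref{lem:partial}--\ref{lem:coaction}, combined with the transfer of the Rohlin property across a strong Morita equivalence. First I would invoke Lemma~\ref{lem:partial} to fix a partial isometry $z\in M_n(A)\otimes H^0$ with $z^*z=(\rho\otimes\id)(q)$, $zz^*=q\otimes 1^0$ and the normalization $\widehat{z}(1)=q$; this last condition is exactly what forces the counit axiom of a twisted coaction, so that Lemmas~\ref{lem:weak} and~\ref{lem:twisted} apply and $(\sigma,u)$, with $\sigma=\Ad(z)\circ(\rho\otimes\id_{M_n(\BC)})$ and $u=(z\otimes 1^0)(\rho\otimes\id\otimes\id)(z)(\id\otimes\Delta^0)(z^*)$, is a twisted coaction of $H^0$ on $B=qM_n(A)q$.

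Next I would exhibit the Morita equivalence linking $\rho$ and $(\sigma,u)$. Taking $p=1_A\otimes f$ for a minimal projection $f\in M_n(\BC)$ and $X=pM_n(A)q$ regarded as an $A-B$-equivalence bimodule, Lemma~\ref{lem:coaction} shows that $\lambda$ is a twisted coaction of $H^0$ on $X$ with respect to $(A,B,\rho,\sigma,u)$. Thus $(A,B,X,\rho,1,\sigma,u,\lambda,H^0)$, with $1=1_A\otimes 1^0\otimes 1^0$ the trivial cocycle of the genuine coaction $\rho$, is a twisted covariant system in which $X$ is an equivalence bimodule. By Definition~\ref{Def:morita}(iii) this is precisely the statement that the twisted coaction $(\rho,1)$ is strongly Morita equivalent to $(\sigma,u)$. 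Since a genuine coaction is a twisted coaction with trivial cocycle, the hypothesis that $\rho$ has the Rohlin property is the statement that $(\rho,1)$ does, and Corollary~\ref{cor:twisted}(1) then yields that $(\sigma,u)$ has the Rohlin property on $B$.

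The step I expect to be the crux is the passage from the twisted coaction $(\sigma,u)$ to a genuine coaction, as the theorem demands. The construction is forced into the twisted framework because $q$ is only Murray--von Neumann equivalent to the invariant projection $q\otimes 1^0$ inside $M_n(A)\otimes H^0$, not equal to it, so compressing $\rho\otimes\id_{M_n(\BC)}$ to the corner $B$ genuinely twists the coaction and $u$ need not be a coboundary of $B\otimes H^0$. To reach an honest coaction one must untwist $u$, and the only source of the unitaries needed for this is the Rohlin property just established for $(\sigma,u)$: the plan is to use the approximate representability of $\widehat{\sigma}$ to exhibit $(\sigma,u)$ as exterior equivalent to a genuine coaction $\sigma'$ on $B$, after which Lemma~\ref{lem:exterior} identifies exterior equivalence as a special case of strong Morita equivalence and Corollary~\ref{cor:twisted}(1), or equivalently Theorem~\ref{thm:preserve}, transports the Rohlin property to $\sigma'$. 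This untwisting is the delicate point; the construction of $(\sigma,u)$ and the transfer of the Rohlin property are otherwise direct consequences of the lemmas.
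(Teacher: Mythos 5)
Your proposal is correct and follows essentially the same route as the paper: construct the twisted coaction $(\sigma,u)$ on $B=qM_n(A)q$ via Lemmas \ref{lem:partial}--\ref{lem:coaction}, use the equivalence bimodule $X=pM_n(A)q$ with the twisted coaction $\lambda$ to get strong Morita equivalence of $(\rho,1)$ and $(\sigma,u)$, transfer the Rohlin property by Corollary \ref{cor:twisted}(1), and then untwist $u$. The step you flag as the crux is settled in the paper by citing Theorem 9.6 of \cite{KT2:coaction}, which uses the Rohlin property of $(\sigma,u)$ to produce a unitary $y\in B\otimes H^0$ with $(y\otimes 1^0)(\sigma\otimes\id_{H^0})(u)(\id\otimes\Delta^0)(y^*)=1_B\otimes 1^0\otimes 1^0$, so that $\sigma_1=\Ad(y)\circ\sigma$ is a genuine coaction exterior equivalent to $(\sigma,u)$ and inherits the Rohlin property --- precisely the untwisting-by-Rohlin mechanism you anticipated.
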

\begin{proof}By Lemmas \ref{lem:twisted} and \ref{lem:coaction}, there is a twisted coaction $(\sigma, u)$
such that $(\sigma, u)$ is strongly Morita equivalent to $\rho$. By Corollary \ref{cor:twisted}, $(\sigma, u)$ has the
Rohlin property. Furthermore, by \cite [Theorem 9.6]{KT2:coaction}, there is a unitary element $y\in B\otimes H^0$
such that
$$
(y\otimes 1^0 )(\sigma\otimes\id_{H^0})u(\id\otimes\Delta^0 )(y^* )=1_B \otimes 1^0 \otimes 1^0 .
$$
Let $\sigma_1 =\Ad(y)\circ\sigma$. Then $\sigma_1$ is a coaction of $H^0$ on $B$ with the
Rohlin property by easy computations since $\sigma_1$ is exterior equivalent to $(\sigma, u)$.
\end{proof}

Let $A$ be a UHF-algebra of type $N^{\infty}$, where $N$ is the dimension of a finite dimensional
$C^*$-Hopf algebra $H$.
In \cite{KT2:coaction}, we showed that there
is a coaction $\rho$ of $H^0$ on $A$ with the Rohlin property.
\begin{cor}\label{cor:UHF}With the above notations, for any unital $C^*$-algebra $B$, that is
strongly Morita equivalent to $A$, there is a coaction $\sigma$ of $H^0$ on $B$ with the Rohlin property.
\end{cor}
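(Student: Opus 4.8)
The plan is to deduce the corollary directly from Theorem~\ref{thm:appli}. Since $B$ is strongly Morita equivalent to the UHF-algebra $A$, Rieffel's \cite[Proposition 2.1]{Rieffel:rotation} supplies an $n\in\BN$ and a full projection $q\in M_n (A)$ with $B\cong qM_n (A)q$. By Theorem~\ref{thm:appli}, once we check the equivalence
$$
(\rho\otimes\id_{M_n (\BC)})(q)\sim q\otimes 1^0 \qquad\text{in } M_n (A)\otimes H^0,
$$
we obtain a coaction of $H^0$ on $B$ with the Rohlin property, which is exactly the assertion. So the whole problem reduces to verifying this one Murray--von Neumann equivalence.

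To verify it I would exploit that $A$ is UHF: it carries a unique tracial state $\tau_A$, and in every matrix algebra over $A$ two projections are Murray--von Neumann equivalent precisely when they have the same normalized trace. Using $H^0\cong\oplus_{k=1}^K M_{d_k}(\BC)$ we split $M_n (A)\otimes H^0\cong\oplus_{k=1}^K M_{nd_k}(A)$, and it suffices to compare, block by block, the two projections in $M_{nd_k}(A)$. Writing $q=\sum_{i,j}q_{ij}\otimes e_{ij}$ with $q_{ij}\in A$ and $\{e_{ij}\}$ matrix units of $M_n (\BC)$, the block of $q\otimes 1^0$ is $q\otimes I_{d_k}$, whose normalized trace is $\mathrm{tr}_{M_n (A)}(q)$.

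For $(\rho\otimes\id)(q)$ the decisive point is a trace-invariance property of the coaction. For each $k$ let $t_k$ be the tracial state on $H^0$ given by the normalized matrix trace on the $k$-th block; viewing $t_k$ as an element of $H=(H^0)^*$, the normalized trace of the $k$-th block of $(\rho\otimes\id)(q)$ works out to $\tfrac1n\sum_i \tau_A (t_k\cdot_{\rho} q_{ii})$. Now $\Phi_k=(\id\otimes t_k)\circ\rho$ is a unital completely positive map on $A$, and because $\rho$ is a $*$-homomorphism and $\tau_A\otimes t_k$ is a trace on $A\otimes H^0$ (being a product of traces), the state $\tau_A\circ\Phi_k$ is tracial; by uniqueness of the trace on $A$ it must equal $\tau_A$. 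Consequently $\tau_A (t_k\cdot_{\rho} q_{ii})=\tau_A (q_{ii})$, so the $k$-th block of $(\rho\otimes\id)(q)$ also has normalized trace $\mathrm{tr}_{M_n (A)}(q)$. Since the traces agree in every block, the two projections are Murray--von Neumann equivalent, and Theorem~\ref{thm:appli} finishes the argument.

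I expect the heart of the matter to be this last paragraph: the identity $\tau_A\circ(\id\otimes t_k)\circ\rho=\tau_A$ and the identification of the block-wise traces with $\mathrm{tr}_{M_n (A)}(q)$. The reduction via Theorem~\ref{thm:appli} and the appeal to Rieffel's proposition are routine, as is the classification of projections over a UHF-algebra by their trace.
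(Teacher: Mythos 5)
Your proposal is correct, and its skeleton coincides with the paper's: both proofs invoke \cite[Proposition 2.1]{Rieffel:rotation} to identify $B$ with $qM_n (A)q$ and then reduce everything to the single hypothesis $(\rho\otimes\id_{M_n (\BC)})(q)\sim q\otimes 1^0$ needed for Theorem~\ref{thm:appli}. The divergence is in how that equivalence is verified. The paper settles it in one line by citing \cite[Lemma 10.10]{KT2:coaction}, which produces the equivalence from the Rohlin property of $\rho$ together with the fact that $A$ has cancellation; you instead give a self-contained argument special to the UHF setting: decompose $M_n (A)\otimes H^0 \cong \oplus_{k=1}^K M_{nd_k}(A)$, observe that $\tau_A \circ (\id\otimes t_k )\circ\rho$ is a tracial state (your computation, using that $\rho$ is a unital $*$-homomorphism and that $\tau_A \otimes t_k$ is a trace, is sound) and hence equals $\tau_A$ by uniqueness of the trace, so the two projections have the same normalized trace in every block; then equality of traces gives Murray--von Neumann equivalence because for a UHF algebra the trace is injective on $K_0$ (it identifies $K_0 (A)$ with $\BZ [1/N]$) and cancellation holds. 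You should state these last two facts explicitly, since trace equality alone does not yield equivalence in general, but both are standard for UHF algebras and your implicit use of them is legitimate. What each approach buys: your route is elementary and, notably, never uses the Rohlin property to verify the hypothesis of Theorem~\ref{thm:appli} --- it shows $(\rho\otimes\id)(q)\sim q\otimes 1^0$ for an arbitrary coaction of $H^0$ on an algebra with unique trace separating $K_0$, with the Rohlin property entering only through Theorem~\ref{thm:appli} itself; the paper's citation is shorter and applies under the weaker hypothesis of cancellation alone, at the cost of importing the machinery of \cite{KT2:coaction}.
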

\begin{proof}By \cite [Proposition 2.1]{Rieffel:rotation} there are $n\in \BN$ and a full projection $q\in M_n (A)$
such that $B$ is isomorphic to $qM_n (A)q$. We identify $B$ with $qM_n (A)q$.
Let $\rho$ be a coaction of $H^0$ on $A$ with the Rohlin property. Then \cite [Lemma 10.10]{KT2:coaction},
$(\rho\otimes\id_{M_n (\BC)})(q)\sim q\otimes 1^0 $ in $M_n (A)\otimes H^0$ since $A$ has cancellation.
Therefore, by Theorem \ref{thm:appli} we obtain the conclusion.
\end{proof}

\sl
Acknowledgement.
\rm
The authors wish to thank the referee for valuable suggestions for improvement of
the manuscript.


\end{document}